\def\overset#1#2{{\mathrel{\mathop {{#2}_{}}\limits^{#1}}}}
\def\underset#1#2{{\mathrel{\mathop {{}_{} {#2}}\limits_{{#1}_{}}}}}
\def\upplim_#1{\underset{#1}{\overline\lim}\;}
\def\lowlim_#1{\underset{#1}{\underline\lim}\;}
\newtheorem{corollary}[equation]{Corollary}
\newtheorem{claim}[equation]{\indent \rm {\it Claim}}
\newtheorem{definition}[equation]{Definition}
\newtheorem{lemma}[equation]{Lemma}
\newtheorem{proposition}[equation]{Proposition}
\newtheorem{theorem}[equation]{Theorem}
\newcommand{\C}{{\mathbb{C}}}
\newcommand{\N}{\mathbb{N}}
\newcommand{\B}{\mathbb{B}}
\renewcommand{\P}{{\mathbb{P}}}
\newcommand{\R}{{\mathbb{R}}}
\newcommand{\supp}{\mathrm{Supp}\,}
\newcommand{\Z}{\mathbb{Z}}
\numberwithin{equation}{section}
\begin{document}
\title[Meromorphic mappings of a complete connected K\"{a}hler manifold]{Meromorphic mappings of a complete connected K\"{a}hler manifold into a projective space sharing hyperplanes} 

\author{Si Duc Quang$^{1,2}$}
\address{$^{1}$Department of Mathematics, Hanoi National University of Education\\
136-Xuan Thuy, Cau Giay, Hanoi, Vienam.}
\address{$^{2}$Thang Long Institute of Mathematics and Applied Sciences\\
Nghiem Xuan Yem, Hoang Mai, Ha Noi.}
\email{quangsd@hnue.edu.vn}

\def\thefootnote{\empty}
\footnotetext{
2010 Mathematics Subject Classification:
Primary 32H30, 32A22; Secondary 30D35.\\
\hskip8pt Key words and phrases: finiteness theorem, K\"{a}hler manifold, non-integrated defect relation.\\
%This research is funded by Vietnam National Foundation for Science and Technology Development (NAFOSTED) under grant number 101.04-2018.01.
}

\begin{abstract} {Let $M$ be a complete K\"{a}hler manifold, whose universal covering is biholomorphic to a ball $\B^m(R_0)$ in $\C^m$\ ($0<R_0\le +\infty$). In this article, we will show that if three meromorphic mappings $f^1,f^2,f^3$ of $M$ into $\P^n(\C)\ (n\ge 2)$ satisfying the condition $(C_\rho)$  and sharing $q\ (q> 2n+1+\alpha+\rho K)$ hyperplanes in general position regardless of multiplicity with certain positive constants $K$ and $\alpha <1$ (explicitly estimated), then $f^1=f^2$ or $f^2=f^3$ or $f^3=f^1$. Moreover, if the above three mappings share the hyperplanes with mutiplicity counted to level $n+1$ then $f^1=f^2=f^3.$ Our results generalize the finiteness and uniqueness theorems for meromorphic mappings of $\C^m$ into $\P^n(\C)$ sharing $2n+2$ hyperplanes in general position with truncated multiplicity.}
\end{abstract}
\maketitle

\section{Introduction}

In $1926$, R. Nevanlinna \cite{N} showed that there are at most two distinct non-constant meromorphic functions $f$ and $g$ on the complex plane $\C$ having the same inverse images for four distinct values, and these functions must be linked by a M\"{o}bius transformation. This result is usually called the four values theorem of Nevanlinna. After that, many authors have  extended and improved the result of Nevanlinna to the case of meromorphic mappings into complex projective spaces. These theorems are called finiteness theorems. Firstly, in 1983, L. Smiley \cite{Smiley} showed that there are at most two distinct linearly non-degenerate meromorphic mappings from $\C^m$ into $\P^n(\C)$ sharing $3n+1$ hyperplanes in general position regardless of multiplicity. Here, two meromorphic mappings are said to share a hyperplane if they have the same inverse image for that hyperplane and they coincide on this inverse image. The best result on this problem is recently given by the author \cite{SDQ12,SDQ19} when we reduced the number $3n+1$ of hyperplanes in the result of L. Smiley to $2n+2$. To state this result, first of all we recall the following.

Let $\B^m(R_0)$ be the ball $\{z\in\C^m;  \|z\|<R_0\}$, where $0<R_0\le +\infty$. Let $f$ be a non-constant meromorphic mapping of $\B^m(R_0)$ into $\P^n(\C)$ with a reduced representation $f = (f_0 : \dots : f_n)$, and $H$ be a hyperplane in $\P^n(\C)$ given by $H=\{a_0\omega_0+\cdots +a_n\omega_n=0\},$ where $(a_0,\ldots ,a_n)\ne (0,\ldots ,0)$.
Set  $(f,H)=\sum_{i=0}^na_if_i$. We see that $\nu_{(f,H)}$ is the pull-back divisor of $H$ by $f$, which is defined independently from the choice of the reduced representation of $f$ and the representation of $H$. 

Let $H_1,\ldots ,H_q$ be $q$ hyperplanes  of $\P^n(\C)$ in general position.
Assume that $f$ is linearly non-degenerate and satisfies
$$\dim f^{-1}(H_i)\cap f^{-1}(H_j) \le m-2 \quad (1 \le i<j \le q).$$ 
Let $d$ be a positive integer. We consider the set $\mathcal {F}(f,\{H_i\}_{i=1}^q,d)$ of all meromorphic mappings 
$g: \B^m(R_0) \to \P^n(\C)$ satisfying the following conditions:
\begin{itemize}
\item[(a)] $\nu_ {(f,H_i)}^{[d]}=\nu_{(g,H_i)}^{[d]}\quad (1\le i \le q),$
\item[(b)] $f(z) = g(z)$ on $\bigcup_{i=1}^{q}f^{-1}(H_i)$.
\end{itemize}
Here, by $\nu_{\varphi}$ we denote the divisor of the meromorphic function $\varphi$ and $\nu^{[d]}_{\varphi}=\min\{\nu_{\varphi},d\}$.
\vskip0.2cm
For the case of $R_0=+\infty$, the best finiteness theorem available at the present is stated as follows.

\vskip0.2cm
\noindent
{\bf Theorem A (\cite[Theorem 1.1]{SDQ19}).} {\it \ If $n\ge 2$ and $q=2n+2$ then $\sharp\ \mathcal {F}(f,\{H_i\}_{i=1}^q,1)\le 2.$}

\vskip0.2cm
We would also like to emphasize here, Theorem A is a weak form of  \cite[Theorem 1.1]{SDQ19}. Actually, in \cite[Theorem 1.1]{SDQ19} all zeros of functions $(f,H_i)$ with multiplicity more than a certain number are omitted in the sharing hyperplanes condition. 

Our first purpose in this paper is to generalize Theorem A to the case, where the meromorphic mapping $f$ is from a complete K\"{a}hler manifold into $\P^n(\C)$. We would like to emphazise here that, in order to study the finiteness problem of meromorphic mappings for the case of mappings from $\C^m,$  almost all authors use Cartan's auxialiary functions (see Definition \ref{2.2}) and compare the counting functions of these auxialiary functions with the characteristic functions of the mappings. However, in the general case of K\"{a}hler manifold, this method may do not work since this comparation does not make sense if the growth of the characteristic functions do not increase quickly enough. In order to overcome this difficulty, in this paper, we will introduce the notion of small integration and bounded integration for plurisubharmonic functions with respect to a set of meromorphic mappings (see Definitions \ref{3.1} and \ref{3.3}). Our essential key in the proof of the main results of this paper is Proposition \ref{3.5}, which can be considered as a general form of finiteness theorem for meromorphic mampings on K\"{a}hler manifold. Our method in this paper is not only used to study finiteness problem of meromorphic mapping, but also may be applied to study unicity, degeneracy and algebraic dependence problems of meromorphic mappings. Many results for the case of meromorphic mappings on $\C^m$ can be translated to the case of mappings on K\"{a}hler manifold by this method.

To state our first main result, we need to recall the following.

Let $M$ be an $m$-dimensional connected K\"{a}hler manifold with K\"{a}hler form $\omega$ and $f$ be a meromorphic map of $M$ into $\P^n(\C)$. Throughout this paper, we always assume that the universal covering of $M$ is biholomorphic to a ball $\B^m(R_0)$ in $\C^m$\ ($0<R_0\le +\infty$). For $\rho>0$, we
say that $f$ satisfies the condition ($C_\rho$) if there exists a nonzero bounded continuous real-valued function $h$ on $M$ such that
$$\rho\Omega_f+dd^c\log h^2\ge \mathrm{Ric}\omega,$$
where $\Omega_f$ denotes the pull-back of the Fubini-Study metric form on $\P^n(\C)$ by $f$.

Let $f$ be a linearly non-degenerate meromorphic mapping from $M$ into $\P^n(\C)$ which satisfies the condition $(C_\rho)$.
Let $H_1,\ldots,H_q$ be $q$ hyperplanes of $\P^n(\C)$ in general possition. 
Denote by $\nu_{(f,H_i)}$ the pull-back divisor of $H_i$ by $f$. 
Assume that 
$$\dim f^{-1}(H_i)\cap f^{-1}(H_j) \le m-2 \quad (1 \le i<j \le q).$$ 
The family $\mathcal {F}(f,\{H_i\}_{i=1}^q,d)$ is defined similarly as above.

In this paper, we will prove the following finiteness theorem for meromorphic mappings from K\"{a}hler manifold into $\P^n(\C)$ sharing hyperplanes regardless of multiplicity as follows.

\begin{theorem}\label{1.1}
Let $M$ be an $m$-dimensional connected K\"{a}hler manifold whose universal covering is biholomorphic to is biholomorphic to a ball $\B^m(R_0)$ in $\C^m$\ ($0<R_0\le +\infty$), and let $f$ be a linearly non-degenerate meromorphic mapping of $M$ into $\P^n(\C)\ (n\ge 2)$. Let $H_1,\ldots,H_q$ be $q$ hyperplanes of $\P^n(\C)$ in general possition. Assume that $f$ satisfies the condition $(C_\rho)$ and
$$\dim f^{-1}(H_i)\cap f^{-1}(H_j) \le m-2 \quad (1 \le i<j \le q).$$ 
If $q>2n+1+\dfrac{2n}{3n+1}+\rho\dfrac{(n^2+4q-3n)(6n+1)}{6n^2+2}$ then $\sharp \mathcal {F}(f,\{H_i\}_{i=1}^q,1)\le 2$.
\end{theorem}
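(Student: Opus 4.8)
\medskip\noindent
The plan is to argue by contradiction: assume that $\mathcal{F}(f,\{H_i\}_{i=1}^q,1)$ contains three pairwise distinct maps $f^1,f^2,f^3$ and derive a contradiction, so that $\sharp\,\mathcal{F}(f,\{H_i\}_{i=1}^q,1)\le 2$. By the definition of the family these three maps share every $H_i$ pairwise with truncation to level $1$ (that is, $\nu^{[1]}_{(f^p,H_i)}=\nu^{[1]}_{(f^q,H_i)}$ and $f^p\equiv f^q$ on $\bigcup_i (f^p)^{-1}(H_i)$), and, since $q$ is large and all of them share the $H_i$ with $f$, a first-main-theorem comparison shows that each $f^p$ is linearly non-degenerate, has characteristic function comparable to $T_f$, and satisfies a condition of type $(C_{\rho'})$. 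I would lift everything to the universal covering $\B^m(R_0)$ and carry out value-distribution theory there with the ball-type Nevanlinna functions; the condition $(C_\rho)$ descends to the cover and will eventually be the source of the curvature factor $\rho\,\tfrac{(n^2+4q-3n)(6n+1)}{6n^2+2}$.

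First I would introduce, for each ordered pair $i\neq j$ of hyperplane indices, Cartan's auxiliary functions (Definition \ref{2.2}) built from the meromorphic functions $\dfrac{(f^p,H_i)}{(f^q,H_i)}$ attached to the triple $(f^1,f^2,f^3)$. The sharing hypothesis forces each such auxiliary function to vanish on the common inverse images $\bigcup_k (f^1)^{-1}(H_k)$ and to have poles dominated by the divisors $\nu^{[n-1]}_{(f^p,H_k)}$; here the general position of the $H_i$ and the hypothesis $\dim (f^1)^{-1}(H_i)\cap (f^1)^{-1}(H_j)\le m-2$ are used. Then I would split into two cases. If a sufficiently large sub-collection of these auxiliary functions vanishes identically, the resulting polynomial identities among the $(f^p,H_i)$ force, by linear algebra and general position, $f^p=f^q$ for some pair --- contradicting distinctness. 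Hence I may assume that some Cartan auxiliary function $\Phi\not\equiv 0$.

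In that remaining case I would estimate the divisor of $\Phi$: its zero divisor dominates $\sum_k\bigl(\nu_{(f^1,H_k)}-\nu^{[1]}_{(f^1,H_k)}\bigr)$ together with a fixed positive multiple of $\sum_k\nu^{[1]}_{(f^1,H_k)}$, while its polar divisor is dominated by $\sum_k\nu^{[n-1]}_{(f^1,H_k)}$ (and the $f^2$-, $f^3$-analogues). On a general complete K\"ahler manifold one cannot at this point invoke the classical estimate ``$N(r)\le T(r)+o(T(r))$'', since the characteristic function may grow too slowly; this is exactly where I would use the notions of small integration and bounded integration (Definitions \ref{3.1} and \ref{3.3}): I would show that $\log|\Phi|$ and the logarithms of the Wronskian-type terms coming from the second main theorem have small integration with respect to $\{f^1,f^2,f^3\}$, so that the Jensen formula for $\Phi$ together with the second main theorem / non-integrated defect relation under $(C_\rho)$ yields an inequality of the form
\[
\Bigl(q-(2n+1)-\tfrac{2n}{3n+1}-\rho\,\tfrac{(n^2+4q-3n)(6n+1)}{6n^2+2}\Bigr)\,T_f(r)\ \le\ \bigl(\text{terms of small integration w.r.t. }\{f^1,f^2,f^3\}\bigr).
\]
Since, by the hypothesis on $q$, the coefficient on the left is a positive constant, Proposition \ref{3.5} --- the general finiteness criterion for meromorphic mappings on K\"ahler manifolds --- applies and yields the contradiction, completing the proof.

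I expect the main obstacle to be precisely the step just sketched: replacing the classical growth comparison by the small/bounded-integration machinery, which forces one to show that \emph{all} error terms (the logarithmic-derivative contributions, the Ricci term coming from $(C_\rho)$, and the polar contributions of $\Phi$) are controlled not merely up to $o(T_f)$ but in the absolute sense needed for a growth-free argument, and to do so uniformly over the three maps. A secondary, bookkeeping-heavy obstacle is pinning down the explicit constant: one has to choose the truncation levels and the weights of the various $N^{[k]}_{(f^p,H_i)}$-contributions (coming from the auxiliary functions on one side and from the second main theorem on the other) so that the coefficient of $T_f(r)$ comes out exactly equal to $q-(2n+1)-\tfrac{2n}{3n+1}-\rho\,\tfrac{(n^2+4q-3n)(6n+1)}{6n^2+2}$; the appearance of $q$ inside the curvature coefficient reflects that each of the $q$ hyperplanes feeds one Ricci/$(C_\rho)$ term into the estimate.
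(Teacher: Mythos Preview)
Your outline has the right scaffolding (contradiction, Cartan-type auxiliary functions, the small/bounded-integration machinery, Proposition~\ref{3.5}), but it skips a structural step that the argument cannot do without and mis-specifies both the auxiliary functions and the case split. First, the paper's Cartan functions are built from $F_k^{ij}=\dfrac{(f^k,H_i)}{(f^k,H_j)}$ (same map, two hyperplanes), not from $\dfrac{(f^p,H_i)}{(f^q,H_i)}$; with your ratios the zero/pole bookkeeping you describe does not come out. More importantly, before any Cartan-function analysis one must first prove, via a separate application of Proposition~\ref{3.5} to the $3\times 3$ determinants $P_i=\det\bigl((f^u,H_{t})\bigr)_{u=1,2,3;\,t=i,\sigma(i),\gamma(i)}$, that $f^1\wedge f^2\wedge f^3\equiv 0$ (Lemma~\ref{4.3}). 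This linear dependence is what makes the identities $\det(V_i,V_j,V_t)=0$ available later and is the reason the whole argument can be reduced to comparing pairs of $F_k^{ij}$'s; your plan never produces it.

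Second, your dichotomy ``either many $\Phi$ vanish (hence two maps coincide) or some $\Phi\not\equiv 0$'' is too coarse to reach the stated constant. The paper introduces the set $P$ of indices $i$ for which $\Phi^\alpha_{ij}\equiv 0$ for some $j$ with $V_i\not\cong V_j$ and all $|\alpha|\le 1$, and splits into \emph{three} cases $\sharp P\ge 2$, $\sharp P=1$, $\sharp P=0$; each case requires a different holomorphic function to be fed into Proposition~\ref{3.5} (products of the $h_{it}$ from Lemma~\ref{4.6}, products of the $g_{ij}$ from Lemma~\ref{4.8}, and in the last case also the $P^{i\gamma(i)}_{st}$), and each yields a \emph{different} numerical inequality that must be violated by the hypothesis on $q$ (this is Lemma~\ref{4.1}). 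In particular, Proposition~\ref{3.5} is never applied to an inequality of the shape $(\text{positive const.})\,T_f(r)\le (\text{small terms})$ as you suggest; it is applied directly to a holomorphic $h\in B(p,l_0;f^1,f^2,f^3)$ with $\nu_h\ge \lambda\sum_{u,i}\nu^{[n]}_{(f^u,H_i)}$, and the contradiction is purely numerical. The displayed threshold $2n+1+\tfrac{2n}{3n+1}+\rho\tfrac{(n^2+4q-3n)(6n+1)}{6n^2+2}$ is chosen precisely so that it dominates all four bounds arising from these separate applications.
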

Here, by $\sharp S$ we denote the cardinality of the set $S$.

Note: If $M=\C^m$ then $\rho=0$, and hence Theorem \ref{1.1} immediately implies Theorem A.

In the last section of this paper, we will extend the uniqueness theorems for meromorphic mappings of $\C^m$ into $\P^n(\C)$ sharing $2n+2$ hyperplanes (see  \cite{HQ19,SDQ11}) to the case of K\"{a}hler manifolds. Our last result is stated as follows.
\begin{theorem}\label{1.2}
Let $M$ be an $m$-dimensional connected K\"{a}hler manifold whose universal covering is biholomorphic to is biholomorphic to a ball $\B^m(R_0)$ in $\C^m$\ ($0<R_0\le +\infty$), and let $f$ be a linearly non-degenerate meromorphic mapping of $M$ into $\P^n(\C)\ (n\ge 2)$ satisfying the condition $(C_\rho)$. Let $H_1,\ldots,H_q$ be $q$ hyperplanes of $\P^n(\C)$ in general possition such that
$$\dim f^{-1}(H_i)\cap f^{-1}(H_j) \le m-2 \quad (1 \le i<j \le q).$$ 
Assume that 
$$q>2n+1+\frac{6np}{6np+1}+\rho \left (2n(n+1)+\frac{6n^2(n+1)p+np(p-2)}{6np+1}\right),$$
where $p=\binom{2n+2}{n+1}$. Then $\sharp \mathcal {F}(f,\{H_i\}_{i=1}^q,n+1)=1$.
\end{theorem}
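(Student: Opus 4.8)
The plan is to argue by contradiction, showing $\sharp\mathcal F(f,\{H_i\}_{i=1}^q,n+1)\le 1$; since $f$ itself lies in this family, that yields the asserted equality. So suppose the family contains two distinct maps $f^1=(f^1_0:\cdots:f^1_n)$ and $f^2=(f^2_0:\cdots:f^2_n)$, both linearly non-degenerate, with $\nu^{[n+1]}_{(f^1,H_i)}=\nu^{[n+1]}_{(f^2,H_i)}$ for every $i$ and $f^1=f^2$ on $\bigcup_{i=1}^q f^{-1}(H_i)$. Put $u_i:=(f^1,H_i)/(f^2,H_i)$, a meromorphic function, and $F_{ij}:=u_i/u_j=\dfrac{(f^1,H_i)(f^2,H_j)}{(f^1,H_j)(f^2,H_i)}$. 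Since $n+1$ of the $H_i$ form a basis of the dual space, $F_{ij}\equiv 1$ for all $i,j$ would force $(f^1,H_i)=u\,(f^2,H_i)$ $(1\le i\le q)$ for a single meromorphic $u$, hence $f^1=f^2$; therefore some $F_{i_0 j_0}\not\equiv 1$. The goal is to turn this into a contradiction.

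Next I would extract the divisor estimates that make everything work. Using the sharing conditions and $\dim f^{-1}(H_i)\cap f^{-1}(H_j)\le m-2$, one checks that, off an analytic subset of codimension $\ge 2$, the function $F_{ij}-1$ vanishes along $f^{-1}(H_i)$ to order at least $\nu^{[n+1]}_{(f,H_i)}$ and has its polar divisor inside $f^{-1}(H_j)$ with ``excess'' bounded by $\nu_{(f,H_j)}-\nu^{[n+1]}_{(f,H_j)}$ — this is precisely the point at which counting multiplicities only up to level $n+1$ is used. Because $q>2n+2$, fix $2n+2$ of the hyperplanes and, following the combinatorial device in the proofs of the $\C^m$ uniqueness statements \cite{SDQ11,HQ19}, assemble from the $p=\binom{2n+2}{n+1}$ size-$(n+1)$ subsets a single meromorphic mapping $F$ of $M$ into a projective space $\P^N(\C)$ — a Segre/product-type combination of the corresponding Cartan auxiliary functions of Definition \ref{2.2}, built so that $F$ is linearly non-degenerate exactly because some $F_{i_0 j_0}\not\equiv 1$. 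By the estimates above, the pull-back divisors relevant to $F$ are controlled by quantities of bounded integration with respect to $\{f^1,f^2\}$ in the sense of Definitions \ref{3.1} and \ref{3.3}, and the characteristic of $F$ is comparable to $6np\cdot T_{f^1}$, so that $F$ satisfies a condition $(C_{\rho'})$ with $\rho'$ a fixed multiple of $np\,\rho$ (the remaining numerical factors, notably a $6n$ from the Second Main Theorem estimate, being supplied by Proposition \ref{3.5}).

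The decisive step is then to apply Proposition \ref{3.5} to the mapping $F$ and the configuration of hyperplanes it determines. Feeding in that $F$ is non-degenerate, satisfies its condition $(C_{\rho'})$, and has divisors of bounded integration, Proposition \ref{3.5} produces, after collecting constants, an inequality of the shape
\[
\Big(q-(2n+1)-\tfrac{6np}{6np+1}-\rho\big(2n(n+1)+\tfrac{6n^2(n+1)p+np(p-2)}{6np+1}\big)\Big)\,T_{f^1}(r)\ \le\ S(r),
\]
where $S(r)$ is a sum of terms of small integration with respect to $\{f^1,f^2\}$ (Definition \ref{3.1}). By the hypothesis $q>2n+1+\frac{6np}{6np+1}+\rho\big(2n(n+1)+\frac{6n^2(n+1)p+np(p-2)}{6np+1}\big)$ the left coefficient is a strictly positive constant, so the inequality is impossible by the very definitions of small and bounded integration together with the non-constancy of $f^1$; equivalently, the conclusion of Proposition \ref{3.5} forces $F$ to be linearly degenerate, hence — by its construction — $F_{ij}\equiv 1$ for all $i,j$, i.e.\ $f^1=f^2$. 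This contradiction proves $\sharp\mathcal F(f,\{H_i\}_{i=1}^q,n+1)=1$.

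I expect the main obstacle to be the second step: exploiting the level-$(n+1)$ truncation so that the ``bad'' part of the divisor of the composite mapping $F$ collapses into quantities Proposition \ref{3.5} can absorb, and simultaneously tracking every numerical constant — the $6n$ coming from the Second Main Theorem estimate and the factor $p=\binom{2n+2}{n+1}$ coming from ranging over all $(n+1)$-subsets of the $2n+2$ distinguished hyperplanes — so that the resulting threshold is exactly $q>2n+1+\frac{6np}{6np+1}+\rho\big(2n(n+1)+\frac{6n^2(n+1)p+np(p-2)}{6np+1}\big)$. A secondary difficulty is the case analysis guaranteeing that $F$ is genuinely non-degenerate (handling the $(n+1)$-subsets whose associated auxiliary function degenerates) and reducing an arbitrary arrangement of the $q$ hyperplanes to the $2n+2$ chosen ones.
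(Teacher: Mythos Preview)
Your proposal has the right opening (assume $f^1\neq f^2$, form the ratios $h_i=(f^1,H_i)/(f^2,H_i)$), but the core mechanism is off. First, Proposition \ref{3.5} is not applied to an auxiliary map $F$ and does not output a characteristic-function inequality; it is always applied to the pair $f^1,f^2$, taking as input a holomorphic function of bounded integration for $\{f^1,f^2\}$ together with a lower bound on its divisor, and it returns directly an upper bound on $q$. The auxiliary map $F$ in the paper (into $\P^{t-1}$, not $\P^{p-1}$) is used only to manufacture such a function via its general Wronskian. Second, the Cartan auxiliary functions $\Phi^\alpha$ of Definition \ref{2.2} play no role in Theorem \ref{1.2}; they appear only in the proof of Theorem \ref{1.1}, which concerns three maps.

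More seriously, you are missing the two-stage structure that produces the stated constant. The products $h_I=\prod_{i\in I}h_i$ over the $(n+1)$-subsets $I$ of a fixed set of $2n+2$ hyperplanes satisfy a determinantal relation $\sum_I A_I h_I=0$, so the map to $\P^{p-1}$ you propose is \emph{always} degenerate and carries no information. The paper instead extracts a \emph{minimal} sub-relation with $t+1\le p$ terms; when $t\ge 2$ the associated map to $\P^{t-1}$ is non-degenerate by minimality, and Proposition \ref{3.5} (applied to $f^1,f^2$, after first reducing to $q\le 2n+2+2\rho n(n+1)$ via Theorem \ref{5.1}) rules this case out --- this is where the factor $p$ enters, as the bound $t+1\le p$. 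That forces $t=1$ for every choice of $I_0$, so the family $(h_I)$ has the property $(P_{2n+2,n+1})$, and Fujimoto's Proposition \ref{2.5} yields $h_i=\lambda h_j$ for some pair $i\ne j$, hence $h_i=h_j$ and $\nu_{(f^1,H_i)}=\nu_{(f^2,H_i)}$ \emph{exactly} for those two indices. This still does not give $f^1=f^2$; a separate refined divisor estimate, combining $(\prod_i P_i)^{n-1}$ with a second family $\prod_i P'_i$ built from a shifted index function $\gamma$, is needed to reach the final contradiction via one more application of Proposition \ref{3.5}. Your sketch collapses all of this into a single step, and the inference ``$F$ degenerate $\Rightarrow$ all $F_{ij}\equiv 1$'' simply does not follow.
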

If $M=\C^m$ then $\rho=0$ and the above theorem implies the following corollary, which is a weak form of \cite[Theorem 1.2]{SDQ11}.
\begin{corollary}
Let $f$ be a linearly non-degenerate meromorphic mapping of $\C^m$ into $\P^n(\C)\ (n\ge 2)$. Let $H_1,\ldots,H_{2n+2}$ be $2n+2$ hyperplanes of $\P^n(\C)$ in general possition. Assume that 
$$\dim f^{-1}(H_i)\cap f^{-1}(H_j) \le m-2 \quad (1 \le i<j \le 2n+2).$$ 
Then $\sharp \mathcal {F}(f,\{H_i\}_{i=1}^{2n+2},n+1)=1$.
\end{corollary}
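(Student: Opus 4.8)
The plan is to deduce the corollary as the special case $M = \C^m$, $\rho = 0$ of Theorem~\ref{1.2}, with $q = 2n+2$. First I would check that the hypotheses of Theorem~\ref{1.2} are met. When $M = \C^m$ the universal covering is $\C^m = \B^m(+\infty)$, so the covering hypothesis holds with $R_0 = +\infty$. Moreover, since $\mathrm{Ric}\,\omega = 0$ for the standard flat K\"{a}hler form on $\C^m$ and $\Omega_f \ge 0$, every meromorphic $f$ satisfies the condition $(C_\rho)$ for every $\rho > 0$, in particular we may take $\rho$ arbitrarily small (formally, $\rho = 0$ works by taking $h \equiv 1$). The dimension condition $\dim f^{-1}(H_i)\cap f^{-1}(H_j)\le m-2$ is assumed in the corollary verbatim. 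It therefore only remains to verify the numerical inequality of Theorem~\ref{1.2} in the limiting case $\rho = 0$, namely $q > 2n+1 + \frac{6np}{6np+1}$ with $p = \binom{2n+2}{n+1}$.

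The key observation is that $\frac{6np}{6np+1} < 1$ for every positive integer value of $6np$, so $2n+1 + \frac{6np}{6np+1} < 2n+2 = q$. Hence the strict inequality required by Theorem~\ref{1.2} holds with room to spare once $\rho$ is taken small enough; by continuity of the right-hand side in $\rho$ and the fact that $f$ satisfies $(C_\rho)$ for all $\rho>0$, one may choose $\rho>0$ small enough that
$$
q \;=\; 2n+2 \;>\; 2n+1+\frac{6np}{6np+1}+\rho\left(2n(n+1)+\frac{6n^2(n+1)p+np(p-2)}{6np+1}\right),
$$
since the additive $\rho$-term tends to $0$ as $\rho\to 0^+$ while the gap $2n+2 - \bigl(2n+1+\frac{6np}{6np+1}\bigr)$ is a fixed positive number. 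Applying Theorem~\ref{1.2} with this choice of $\rho$ then yields $\sharp\,\mathcal{F}(f,\{H_i\}_{i=1}^{2n+2},n+1) = 1$, which is exactly the assertion of the corollary.

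There is essentially no obstacle here: the corollary is a direct specialization, and the only thing to watch is that the $\rho$-dependent term in Theorem~\ref{1.2} is genuinely continuous (indeed affine) in $\rho$ and vanishes at $\rho = 0$, so that the strict inequality is preserved for small $\rho$. If one prefers to avoid the limiting argument entirely, one can instead cite the already-stated convention ``If $M=\C^m$ then $\rho=0$'' used immediately after Theorem~\ref{1.2} in the excerpt, in which case the inequality to check is literally $2n+2 > 2n+1+\frac{6np}{6np+1}$, which is immediate. Either way the proof is complete.
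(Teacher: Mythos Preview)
Your proposal is correct and follows exactly the paper's own route: the corollary is stated immediately after Theorem~\ref{1.2} with the remark ``If $M=\C^m$ then $\rho=0$ and the above theorem implies the following corollary,'' and your verification that $2n+2 > 2n+1+\dfrac{6np}{6np+1}$ is the only computation needed. The limiting argument in $\rho$ you include is a harmless extra precaution; the paper simply adopts the convention $\rho=0$ for $\C^m$ (justified internally by the $R_0=+\infty$ branch of the proof of Proposition~\ref{3.5}), so either version is fine.
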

\section{Basic notions and auxiliary results from Nevanlinna theory}

\noindent
{\bf 2.1. Counting function.}\ We set $\|z\| = \big(|z_1|^2 + \dots + |z_m|^2\big)^{1/2}$ for
$z = (z_1,\dots,z_m) \in \mathbb C^m$ and define
\begin{align*}
\B^m(R) &:= \{ z \in \mathbb C^m : \|z\| < R\}\ \   (0<R\le \infty),\\
S(R) &:= \{ z \in \mathbb C^m : \|z\| =R\}\ (0<R<\infty).
\end{align*}
Define 
$$v_{m-1}(z) := \big(dd^c \|z\|^2\big)^{m-1}\quad \quad \text{and}$$
$$\sigma_m(z):= d^c \text{log}\|z\|^2 \land \big(dd^c \text{log}\|z\|^2\big)^{m-1}
 \text{on} \quad \mathbb C^m \setminus \{0\}.$$

 For a divisor $\nu$ on  a ball $\B^m(R)$ of $\mathbb C^m$, and for a positive integer $p$ or $p= \infty$, we define the counting function of $\nu$ by
\begin{align*}
&\nu^{[p]}(z)=\min\ \{p,\nu(z)\},\\
&n(t) =
\begin{cases}
\int\limits_{|\nu|\,\cap \B(t)}
\nu(z) v_{m-1} & \text  { if } m \geq 2,\\
\sum\limits_{|z|\leq t} \nu (z) & \text { if }  m=1. 
\end{cases}
\end{align*}
Similarly, we define $n^{[p]}(t).$

Define
$$ N(r,r_0,\nu)=\int\limits_{r_0}^r \dfrac {n(t)}{t^{2m-1}}dt \quad (0<r_0<r<R).$$

Similarly, define $N(r,r_0,\nu^{[p]})$ and denote it by $N^{[p]}(r,r_0,\nu)$.

Let $\varphi : \mathbb B^m(R) \longrightarrow \overline\C $ be a meromorphic function. Denote by $\nu_\varphi$ (res. $\nu^{0}_{\varphi}$) the divisor (resp. the zero divisor) of $\varphi$. Define
$$N_{\varphi}(r,r_0)=N(r,r_0,\nu^{0}_{\varphi}), \ N_{\varphi}^{[p]}(r,r_0)=N^{[p]}(r,r_0,(\nu^{0}_{\varphi})^{[p]}).$$
For brevity, we will omit the character $^{[p]}$ if $M=\infty$.

\vskip0.2cm
\noindent
{\bf 2.2. Characteristic function.}\ Throughout this paper, we fix a homogeneous coordinates system $(x_0:\cdots :x_n)$ on $\P^n(\C)$. Let $f : \mathbb B^m(R) \longrightarrow \mathbb P^n(\mathbb C)$ be a meromorphic mapping with a reduced representation $f = (f_0, \ldots,f_n)$, which means that each $f_i$ is a holomorphic function on $\B^m(R)$ and  $f(z) = \big(f_0(z) : \dots : f_n(z)\big)$ outside the indeterminancy locus $I(f)$ of $f$. Set $\|f \| = \big(|f_0|^2 + \dots + |f_n|^2\big)^{1/2}$.

The characteristic function of $f$ is defined by 
$$ T_f(r,r_0)=\int_{r_0}^r\dfrac{dt}{t^{2m-1}}\int\limits_{B(t)}f^*\Omega\wedge v^{m-1}, \ (0<r_0<r<R). $$
By Jensen's formula, we have
\begin{align*}
T_f(r,r_0)= \int\limits_{S(r)} \log\|f\| \sigma_m -
\int\limits_{S(r_0)}\log\|f\|\sigma_m +O(1), \text{ (as $r\rightarrow R$)}.
\end{align*}

\vskip0.2cm
\noindent
{\bf 2.3. Auxiliary results.}\ Repeating the argument in \cite[Proposition 4.5]{Fu85}, we have the following.

\begin{proposition}\label{2.1}
Let $F_0,\ldots ,F_{l-1}$ be meromorphic functions on the ball $\B^{m}(R_0)$ in $\mathbb C^m$ such that $\{F_0,\ldots ,F_{l-1}\}$ are  linearly independent over $\mathbb C.$ Then  there exists an admissible set  
$$\{\alpha_i=(\alpha_{i1},\ldots,\alpha_{im})\}_{i=0}^{l-1} \subset \mathbb N^m,$$
which is chosen uniquely in an explicit way, with $|\alpha_i|=\sum_{j=1}^{m}|\alpha_{ij}|\le i \ (0\le i \le l-1)$ such that:

(i)\  $W_{\alpha_0,\ldots ,\alpha_{l-1}}(F_0,\ldots ,F_{l-1})\overset{Def}{:=}\det{({\mathcal D}^{\alpha_i}\ F_j)_{0\le i,j\le l-1}}\not\equiv 0.$ 

(ii) $W_{\alpha_0,\ldots ,\alpha_{l-1}}(hF_0,\ldots ,hF_{l-1})=h^{l+1}W_{\alpha_0,\ldots ,\alpha_{l-1}}(F_0,\ldots ,F_{l-1})$ for any nonzero meromorphic function $h$ on $\B^m(R_0).$
\end{proposition}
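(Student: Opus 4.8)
The plan is to deduce everything from a canonical ``staircase'' construction of a nonvanishing generalized Wronskian over the field $\mathcal M$ of meromorphic functions on $\B^m(R_0)$, which is a field because the ball is connected. Write $v=(F_0,\dots,F_{l-1})\in\mathcal M^l$ and, for $\alpha\in\N^m$, put $R_\alpha:=(\mathcal D^\alpha F_0,\dots,\mathcal D^\alpha F_{l-1})\in\mathcal M^l$. Fix the graded lexicographic order $\preceq$ on $\N^m$ (compare first by $|\alpha|$, then lexicographically); it is a well-order and is translation invariant, i.e.\ $\gamma\prec\beta\Rightarrow\gamma+\varepsilon\prec\beta+\varepsilon$. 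Call $\alpha$ \emph{new} if $R_\alpha$ is not an $\mathcal M$-linear combination of $\{R_\beta:\beta\prec\alpha\}$, and let $\alpha_0\prec\alpha_1\prec\cdots$ be the new multi-indices in increasing order; this list is canonical and explicit, which is the ``chosen uniquely in an explicit way'' assertion. A routine induction shows that $\{R_{\alpha_i}\}$ is an $\mathcal M$-basis of $V:=\mathrm{span}_{\mathcal M}\{R_\alpha:\alpha\in\N^m\}$ (the non-new $R_\gamma$ are spanned by earlier ones, and a nontrivial relation would express its $\preceq$-largest $R_{\alpha_i}$ through smaller ones, contradicting newness). Once $\dim_{\mathcal M}V=l$ is known there are exactly $l$ new indices, and $W_{\alpha_0,\dots,\alpha_{l-1}}(F_0,\dots,F_{l-1})=\det(\mathcal D^{\alpha_i}F_j)_{0\le i,j\le l-1}\not\equiv0$ because its rows $R_{\alpha_0},\dots,R_{\alpha_{l-1}}$ are $\mathcal M$-independent; that is (i).

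So the crux is to show $\dim_{\mathcal M}V=l$ and to extract the bound $|\alpha_i|\le i$. Set $W_d:=\mathrm{span}_{\mathcal M}\{R_\alpha:|\alpha|\le d\}$. If $\dim W_{d+1}=\dim W_d$, then because $\mathcal D_k(gR_\alpha)=(\mathcal D_kg)R_\alpha+gR_{\alpha+e_k}$ with $|\alpha+e_k|\le d+1$, the space $W_d$ is stable under each coordinate derivation $\mathcal D_k=\partial/\partial z_k$, and an easy induction then gives $W_e=W_d$ for all $e\ge d$; hence the integer sequence $\dim W_0\le\dim W_1\le\cdots$ increases strictly until it stabilizes at some $s$, with $V=W_d$ for $d\gg0$. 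If $s<l$, the annihilator $A:=\{c\in\mathcal M^l:\langle c,R_\alpha\rangle\equiv0\text{ for all }\alpha\}$ has dimension $l-s\ge1$ and is itself $\mathcal D_k$-stable, since $\langle\mathcal D_kc,R_\alpha\rangle=\mathcal D_k\langle c,R_\alpha\rangle-\langle c,R_{\alpha+e_k}\rangle=0$; choosing $0\ne c\in A$ with the fewest nonzero entries and normalizing one entry to $1$, each $\mathcal D_kc\in A$ has strictly fewer nonzero entries and so vanishes, making $c$ a constant vector, whence $\langle c,R_0\rangle=\sum_jc_jF_j\equiv0$ contradicts the $\C$-linear independence of $F_0,\dots,F_{l-1}$. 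Therefore $s=l$, so $\dim W_k\ge k+1$ for $0\le k\le l-1$; since the new indices of degree $\le k$ form a basis of $W_k$ and are listed with non-decreasing degree, there are at least $k+1$ of them, forcing $|\alpha_k|\le k$.

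For (ii) I first record the lower-set (staircase) property of the admissible set: if $\beta\le\alpha$ componentwise and $\alpha$ is new, then $\beta$ is new. Indeed, if $R_\beta=\sum_{\gamma\prec\beta}c_\gamma R_\gamma$, then applying $\mathcal D^{\alpha-\beta}$ and expanding by the Leibniz rule writes $R_\alpha$ as an $\mathcal M$-combination of the $R_{\gamma+\varepsilon}$ with $\gamma\prec\beta$ and $\varepsilon\le\alpha-\beta$; by translation invariance $\gamma+\varepsilon\prec\beta+\varepsilon\preceq\alpha$, contradicting the newness of $\alpha$. Consequently every $\gamma\le\alpha_i$ equals some $\alpha_j$. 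Now expand $\mathcal D^{\alpha_i}(hF_j)=\sum_{\gamma\le\alpha_i}\binom{\alpha_i}{\gamma}(\mathcal D^{\alpha_i-\gamma}h)(\mathcal D^\gamma F_j)$ and use multilinearity and alternation of the determinant in its rows: a term survives only when the chosen multi-indices $(\gamma_0,\dots,\gamma_{l-1})$ are pairwise distinct, hence — all lying in the $l$-element lower set $\{\alpha_0,\dots,\alpha_{l-1}\}$ — form a permutation $\gamma_i=\alpha_{\pi(i)}$ of it; the constraints $\alpha_{\pi(i)}\le\alpha_i$ give $|\alpha_{\pi(i)}|\le|\alpha_i|$, and summing over $i$ forces equality, hence $\alpha_{\pi(i)}=\alpha_i$ and $\pi=\mathrm{id}$. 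Thus only $\gamma_i=\alpha_i$ remains, which yields the stated transformation rule of (ii) under $F_j\mapsto hF_j$.

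The main obstacle is the middle paragraph: proving $\dim_{\mathcal M}V=l$, i.e.\ that iterated derivatives of $v$ fill up $\mathcal M^l$. Everything else is bookkeeping with the Leibniz rule and the order $\preceq$, but here one genuinely uses both the $\C$-linear independence of the $F_j$ and the connectedness of $\B^m(R_0)$ (to pass from $\mathcal D_kc\equiv0$ for all $k$ to $c$ constant). This step is the several-variable counterpart of the classical fact that the Wronskian of $\C$-independent functions is not identically zero, carried out globally on the ball so that the resulting admissible set depends only on $F_0,\dots,F_{l-1}$.
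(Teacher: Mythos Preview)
The paper does not supply its own proof of this proposition; it merely cites Fujimoto \cite[Proposition~4.5]{Fu85} and states the result. Your argument is correct and is in fact the standard construction: the graded-lexicographic ``staircase'' selection of new multi-indices, the stabilization argument for the filtration $W_d$, and the annihilator trick reducing $\dim_{\mathcal M}V<l$ to a $\C$-linear relation among the $F_j$ are exactly Fujimoto's approach, and your derivation of the lower-set property together with the permutation argument for (ii) is the classical proof of the homogeneity rule.

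One remark: your computation in (ii) actually produces the factor $h^{l}$ (one factor of $h$ per row of the $l\times l$ determinant), not $h^{l+1}$. The exponent $l+1$ in the paper's statement is a typo; the correct transformation law is
\[
W_{\alpha_0,\ldots,\alpha_{l-1}}(hF_0,\ldots,hF_{l-1})=h^{\,l}\,W_{\alpha_0,\ldots,\alpha_{l-1}}(F_0,\ldots,F_{l-1}),
\]
which is what your argument yields and what is used elsewhere in the paper.
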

The function $W_{\alpha_0,\ldots ,\alpha_{l-1}}(F_0,\ldots ,F_{l-1})$ is called the general Wronskian of the mapping $F=(F_0,\ldots ,F_{l-1})$.

\begin{definition}[Cartan's auxialiary function]\label{2.2}
For meromorphic functions $F,G,H$ on $\B^m(R_0)$ and $\alpha =(\alpha_1,\ldots ,\alpha_m)\in \Z_+^m$, we define the Cartan's auxiliary function as follows:
$$
\Phi^\alpha(F,G,H):=F\cdot G\cdot H\cdot\left | 
\begin {array}{cccc}
1&1&1\\
\frac {1}{F}&\frac {1}{G} &\frac {1}{H}\\
\mathcal {D}^{\alpha}(\frac {1}{F}) &\mathcal {D}^{\alpha}(\frac {1}{G}) &\mathcal {D}^{\alpha}(\frac {1}{H})
\end {array}
\right|.
$$
\end{definition}

\begin{lemma}[{see \cite[Proposition 3.4]{Fu98}}]\label{2.3} If $\Phi^\alpha(F,G,H)=0$ and $\Phi^\alpha(\frac {1}{F},\frac {1}{G},\frac {1}{H})=0$ for all $\alpha$ with $|\alpha|\le 1$, then one of the following assertions holds:

(i) \ $F=G, G=H$ or $H=F$,

(ii) \ $\frac {F}{G},\frac {G}{H}$ and $\frac {H}{F}$ are all constant.
\end{lemma}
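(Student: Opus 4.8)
The plan is to convert the two hypotheses into an explicit first-order differential system in the ratios of $F,G,H$ and to integrate it. \emph{Reduction and explicit form.} Expanding the determinant in Definition \ref{2.2} along its first row and multiplying by $FGH$ gives
\[
\Phi^\alpha(F,G,H)=(F-G)H\,\mathcal D^\alpha\!\big(\tfrac1H\big)+(H-F)G\,\mathcal D^\alpha\!\big(\tfrac1G\big)+(G-H)F\,\mathcal D^\alpha\!\big(\tfrac1F\big),
\]
and applying this identity to the triple $(\tfrac1F,\tfrac1G,\tfrac1H)$ gives
\[
\Phi^\alpha\!\big(\tfrac1F,\tfrac1G,\tfrac1H\big)=\tfrac1{FGH}\Big[(G-F)\,\mathcal D^\alpha H+(F-H)\,\mathcal D^\alpha G+(H-G)\,\mathcal D^\alpha F\Big].
\]
For $|\alpha|=0$ the first right-hand side equals $(F-G)+(H-F)+(G-H)=0$, so that case of the hypothesis is automatic; the content is the $2m$ equations coming from $\alpha=e_k$ ($1\le k\le m$) in the two families. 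Since $1/F,1/G,1/H$ are assumed meromorphic, none of $F,G,H$ is $\equiv0$. If two of $F,G,H$ coincide we are in case (i); so from now on I would assume $F\not\equiv G$, $G\not\equiv H$, $H\not\equiv F$ and prove that (ii) holds.

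\emph{The key step} is the substitution $U=F/H$, $V=G/H$ (legitimate since $H\not\equiv0$), after which $U,V,U-1,V-1,U-V$ are all $\not\equiv0$ by the reduction just made. Writing $F_k/F=U_k/U+H_k/H$ and $G_k/G=V_k/V+H_k/H$ in the equations $\Phi^{e_k}(F,G,H)=0$, the terms in $H_k$ cancel because their total coefficient is $(U-V)+(1-U)+(V-1)=0$, leaving $(1-U)V_k/V=(1-V)U_k/U$, that is
\[
\partial_k\log\frac{U}{1-U}=\partial_k\log\frac{V}{1-V}\qquad(1\le k\le m).
\]
The same substitution in $\Phi^{e_k}(\tfrac1F,\tfrac1G,\tfrac1H)=0$ again kills the $H_k$-terms and leaves $(U-1)V_k=(V-1)U_k$, i.e.\ $\partial_k\log(U-1)=\partial_k\log(V-1)$ for all $k$. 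A meromorphic function on the connected ball $\B^m(R_0)$ with all first partials $\equiv0$ is constant, so there are constants $c_1,c_2\in\C$ with
\[
\frac{V(1-U)}{U(1-V)}=c_1\qquad\text{and}\qquad\frac{V-1}{U-1}=c_2 .
\]

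\emph{Conclusion.} The second identity gives $V=1-c_2+c_2U$, hence $1-V=c_2(1-U)$; substituting into the first and cancelling $1-U\not\equiv0$ yields $V=c_1c_2U$, so $c_2(c_1-1)U=1-c_2$. If $c_2(c_1-1)\ne0$, then $U$ is constant, hence so is $V=c_1c_2U$, and therefore $F/H=U$, $G/H=V$ and $F/G=U/V$ are all constant, which is (ii). The remaining cases contradict the reduction: $c_2=0$ forces $V\equiv1$, i.e.\ $G\equiv H$; and $c_1=1$ with $c_2\ne0$ forces $c_2=1$, hence $V\equiv U$, i.e.\ $F\equiv G$. The only delicate point — essentially the sole obstacle — is the bookkeeping: confirming the two cancellations of the $H$-derivative terms and checking, at each division (by $H$, $1-U$, $1-V$, $U-1$, $V-1$), that the relevant function is not $\equiv0$, which is exactly what the reduction to pairwise-distinct $F,G,H$ secures.
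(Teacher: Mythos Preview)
Your argument is correct. The expansion of $\Phi^\alpha(F,G,H)$ and $\Phi^\alpha(1/F,1/G,1/H)$ is right, the substitution $U=F/H$, $V=G/H$ cleanly eliminates the $H_k$-terms in both families of equations (the coefficients $(F-G)+(H-F)+(G-H)$ and $(G-F)+(F-H)V+(H-G)U$ do vanish after dividing by $H$), and the resulting pair of integrated identities $\dfrac{V(1-U)}{U(1-V)}=c_1$, $\dfrac{V-1}{U-1}=c_2$ forces $U$ constant unless one lands back in case~(i). The divisions you perform are each by a function that is $\not\equiv 0$ precisely because $F,G,H$ are pairwise distinct, so nothing is lost.

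Note, however, that the paper does not supply its own proof of this lemma: it is quoted verbatim from Fujimoto \cite[Proposition~3.4]{Fu98}, so there is no in-paper argument to compare against. Your proof is in the same spirit as Fujimoto's original --- reduce the two Wronskian-type vanishing conditions to a closed first-order system in the cross-ratios and integrate --- and is a clean, self-contained version of it.
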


\begin{lemma}[{see \cite[Lemma 2.2]{SDQ19}}]\label{2.4}
Let $f$ be a meromorphic mapping from $\B^{m}(R_0)\ (0<R_0\le +\infty)$ into $\P^n(\C)$. Let $f^1,f^2,f^3$ be three maps in $\mathcal F(f,\{H_i\}_{i=1}^q,1)$. Assume that each $f^i$ has a representation $f^i=(f^i_{0}:\cdots :f^i_{n})$, $1\le i\le 3$. Suppose that there exist $s,t,l\in\{1,\ldots ,q\}$ such that
$$ 
P:=\mathrm{det}\left (\begin{array}{ccc}
(f^1,H_s)&(f^1,H_t)&(f^1,H_l)\\ 
(f^2,H_s)&(f^2,H_t)&(f^2,H_l)\\
(f^3,H_s)&(f^3,H_t)&(f^3,H_l)
\end{array}\right )\not\equiv 0.
$$
Then we have
\begin{align*}
\nu_P\ge\sum_{i=s,t,l}\bigl (\min_{1\le u\le 3}\{\nu_{(f^u,H_i)}\}-\nu^{[1]}_{(f,H_i)}\bigl )
+ 2\sum_{i=1}^q\nu^{[1]}_{(f,H_i)}.
\end{align*}
\end{lemma}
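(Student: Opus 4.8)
The plan is to verify the asserted inequality of divisors coefficientwise: for every irreducible hypersurface $V\subset\B^m(R_0)$ we must show that $\mathrm{ord}_V(P)$ dominates the order along $V$ of the right-hand side. Because $H_1,\dots,H_q$ are in general position and $\dim f^{-1}(H_i)\cap f^{-1}(H_j)\le m-2$ for $i\ne j$, such a $V$ lies in at most one of the sets $f^{-1}(H_1),\dots,f^{-1}(H_q)$; and since $f^u\in\mathcal F(f,\{H_i\},1)$, condition (a) forces $(f^u,H_i)$ and $(f,H_i)$ to have the same zero set, so $\mathrm{ord}_V\bigl((f^u,H_i)\bigr)>0$ exactly when $V\subset f^{-1}(H_i)$. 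There are therefore only three cases: if $V\not\subset f^{-1}(H_i)$ for every $i$, the right-hand side has order $0$ along $V$ and there is nothing to prove since $P$ is holomorphic; if $V\subset f^{-1}(H_j)$ with $j\notin\{s,t,l\}$, the right-hand side has order $2$ along $V$; and if $V\subset f^{-1}(H_j)$ with $j\in\{s,t,l\}$ — say $j=s$ — the right-hand side has order $M_V+1$ along $V$, where $M_V:=\min_{1\le u\le 3}\mathrm{ord}_V\bigl((f^u,H_s)\bigr)\ge1$.

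For the two nontrivial cases I would work near a generic point $z_0\in V$, where $V=\{g=0\}$ for a reduced holomorphic function $g$ and $\mathrm{ord}_V(\varphi)$ equals the largest power of $g$ dividing a holomorphic $\varphi$ near $z_0$. Since $z_0\in f^{-1}(H_j)\subset\bigcup_i f^{-1}(H_i)$ and $f^1,f^2,f^3$ coincide there, $f^1(z_0)=f^2(z_0)=f^3(z_0)$ in $\P^n(\C)$, so there is an index $k_0$ with $f^1_{k_0}(z_0),f^2_{k_0}(z_0),f^3_{k_0}(z_0)$ all nonzero; hence $P=f^1_{k_0}f^2_{k_0}f^3_{k_0}\cdot\widetilde P$ near $z_0$, where $\widetilde P:=\det\bigl((f^u,H_i)/f^u_{k_0}\bigr)_{1\le u\le 3,\,i\in\{s,t,l\}}$ is holomorphic and $\mathrm{ord}_V(P)=\mathrm{ord}_V(\widetilde P)$. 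The key point is that, since $f^1=f^2=f^3$ on $\{g=0\}$, the affine coordinate functions $f^u_k/f^u_{k_0}$ all agree on $\{g=0\}$, whence $g$ divides $(f^u,H_i)/f^u_{k_0}-(f^v,H_i)/f^v_{k_0}$ for all $i$ and all $u,v$ (expand $(f^u,H_i)$ in the $f^u_k$). Performing the row operations $R_u\mapsto R_u-R_1$ for $u=2,3$ in $\widetilde P$ therefore makes every entry of rows $2$ and $3$ divisible by $g$.

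It remains to read off the vanishing order of the transformed determinant. If $j\notin\{s,t,l\}$, each of its six expansion terms contains one entry from row $2$ and one from row $3$, hence is divisible by $g^2$; thus $\mathrm{ord}_V(\widetilde P)\ge2$, the required bound. If $j=s$, one uses in addition that $\mathrm{ord}_V\bigl((f^u,H_s)\bigr)\ge M_V$ for all $u$, so the column indexed by $s$ has all three entries divisible by $g^{M_V}$ even after the row operations; writing the expansion of $\widetilde P$ along that column as $e_1D_1-e_2D_2+e_3D_3$, with $e_u$ the column-$s$ entry in row $u$ and $D_u$ the complementary $2\times2$ minor, the summand $e_1D_1$ has order $\ge M_V+2$ (because $e_1$ has order $\ge M_V$ and $D_1$, built from rows $2,3$, has order $\ge2$) while $e_2D_2$ and $e_3D_3$ have order $\ge M_V+1$ (because $e_2,e_3$ have order $\ge M_V$ and $D_2,D_3$, each built from a pair of rows including one of rows $2,3$, have order $\ge1$). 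Hence $\mathrm{ord}_V(\widetilde P)\ge M_V+1$, as needed, and the lemma follows.

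I expect the only genuine subtlety to be the normalization in the second paragraph — dividing all three reduced representations by a single coordinate function non-vanishing at a generic point of $V$ — which is precisely where the sharing hypothesis $f^1=f^2=f^3$ on $\bigcup_i f^{-1}(H_i)$ enters; the rest is routine bookkeeping of vanishing orders in a $3\times 3$ determinant.
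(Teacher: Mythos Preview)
Your proof is correct and follows precisely the standard argument for this type of determinant-vanishing lemma. The paper itself does not supply a proof but merely cites \cite[Lemma~2.2]{SDQ19}; the reasoning there proceeds in the same way: reduce to a generic point of an irreducible component $V$ of some $f^{-1}(H_j)$, normalize the three reduced representations by a coordinate non-vanishing at that point (this is where condition (b) is invoked, exactly as you isolate it), perform row subtractions $R_u\mapsto R_u-R_1$ so that rows $2,3$ become divisible by a local equation $g$ of $V$, and then read off the vanishing order from the cofactor expansion, distinguishing whether or not $j\in\{s,t,l\}$.
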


Let $G$ be a  torsion free abelian group and let $A=(a_1,a_2,\ldots,a_q)$ be a $q-$tuple of elements $a_i$ in $G$. Let $q\ge r>s>1.$ We say that the $q-$tuple $A$ has the property $(P_{r,s})$ if any $r$ elements $a_{l(1)},\ldots,a_{l(r)}$ in $A$ satisfy the condition that for any given $i_1,\ldots,i_s \ (1\le i_1<\cdots<i_s\le r)$, there exist $j_1,\ldots,j_s \ (1\le j_1<\cdots<j_s\le r)$ with $\{i_1,\ldots,i_s\}\neq\{j_1,\ldots,j_s\}$ such that $a_{l(i_1)}\cdots a_{l(i_s)}=a_{l(j_1)}\cdots a_{l(j_s)}.$

\begin{proposition}[{See H. Fujimoto \cite{Fu75}}]\label{2.5} 
Let $G$ be a  torsion free abelian group and $A=(a_1,\ldots,a_q)$ be a $q-$tuple of elements $a_i$ in $G$. If $A$ has the property $(P_{r,s})$ for some $r,s$ with $q\ge r>s>1,$ then there exist $i_1,\ldots,i_{q-r+2}$ with $1\le i_1<\cdots<i_{q-r+2}\le q$ such that 
$a_{i_1}=a_{i_2}=\cdots=a_{i_{q-r+2}}.$
\end{proposition}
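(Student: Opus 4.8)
The plan is to treat this as a purely combinatorial statement: reduce to an ordered abelian group, and then read off the required block of equal elements by sweeping a window of $r$ consecutive entries along the sorted tuple. First I would note that the hypothesis $(P_{r,s})$ and the conclusion are both invariant under permutations of $A$ and under applying any injective group homomorphism entrywise. The subgroup $\langle a_1,\dots,a_q\rangle$ of $G$ is finitely generated and torsion free, hence free of finite rank, hence embeds into $(\R,+)$ as a subgroup (for instance by sending a set of free generators to rationally independent real numbers). So we may assume $a_1,\dots,a_q\in\R$ and, after reindexing, $a_1\le a_2\le\cdots\le a_q$; writing the group law additively, each relation $a_{l(i_1)}\cdots a_{l(i_s)}=a_{l(j_1)}\cdots a_{l(j_s)}$ becomes an equality between two sums of $s$ of the $a_i$.

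The elementary lemma I would isolate is a strict-maximality statement. If $b_1\le b_2\le\cdots\le b_r$ are real numbers with $b_{r-s}<b_{r-s+1}$, then among all $s$-element index sets $J\subseteq\{1,\dots,r\}$ the sum $\sum_{p\in J}b_p$ is strictly maximized by the unique set $J_0=\{r-s+1,\dots,r\}$. Indeed, any $J\neq J_0$ must omit some index of $J_0$ and include some index of $\{1,\dots,r-s\}$; if $t\ge1$ is the common number of omitted and of extra indices, then passing from $J_0$ to $J$ replaces a total value of at least $t\,b_{r-s+1}$ by a total value of at most $t\,b_{r-s}$, so $\sum_{p\in J}b_p\le\sum_{p\in J_0}b_p-t\,(b_{r-s+1}-b_{r-s})<\sum_{p\in J_0}b_p$. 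Hence, if some $r$ elements of $A$, listed nondecreasingly as $b_1\le\cdots\le b_r$, satisfied $b_{r-s}<b_{r-s+1}$, then the $s$-element subset of indices $\{r-s+1,\dots,r\}$ would have strictly larger sum than any other $s$-subset, leaving it without a partner and contradicting $(P_{r,s})$. Therefore, under $(P_{r,s})$, every $r$-element subtuple of $A$, once sorted, has equal entries in positions $r-s$ and $r-s+1$.

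To conclude, I would apply this to the contiguous windows $W_j=(a_{j+1},a_{j+2},\dots,a_{j+r})$ for $j=0,1,\dots,q-r$. Since $A$ is sorted, each $W_j$ is already sorted, so its entries in positions $r-s$ and $r-s+1$ are $a_{j+r-s}$ and $a_{j+r-s+1}$, whence $a_{j+r-s}=a_{j+r-s+1}$ for every such $j$. Chaining these equalities over $j=0,\dots,q-r$ gives $a_{r-s}=a_{r-s+1}=\cdots=a_{q-s+1}$, a run of $(q-s+1)-(r-s)+1=q-r+2$ equal elements; since $1\le r-s$ and $q-s+1\le q$ (using $1<s<r\le q$), choosing $\{i_1<\cdots<i_{q-r+2}\}=\{r-s,\dots,q-s+1\}$ yields exactly the assertion.

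I do not anticipate a real obstacle: the whole argument is one comparison-of-sums sweep. The two places that need care are the opening reduction --- checking that restricting to the finitely generated subgroup and embedding it in $\R$ preserves both $(P_{r,s})$ and the target conclusion, which uses nothing beyond injectivity of the embedding --- and the handling of ties among the $b_p$ in the maximality lemma, which is why I would phrase that lemma via index positions rather than values.
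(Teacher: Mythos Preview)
The paper does not supply its own proof of this proposition; it is quoted from Fujimoto \cite{Fu75} and used as a black box. So there is no in-paper argument to compare against. That said, your proof is correct and is in fact essentially Fujimoto's original argument: embed the finitely generated torsion-free subgroup into $(\R,+)$, sort, and use that under $(P_{r,s})$ the top $s$ entries of any sorted $r$-block cannot have strictly larger sum than every other $s$-subset, forcing $b_{r-s}=b_{r-s+1}$; then slide the window. The reduction step and the index-based phrasing of the maximality lemma are both handled cleanly, and the chain $a_{r-s}=\cdots=a_{q-s+1}$ gives exactly $q-r+2$ equal entries with valid indices since $1<s<r$. Nothing is missing.
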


\section{Functions of small integration}

%Let $M$ be a complete K\"{a}hler manifold whose universal covering is biholomorphic to a ball $\B^m(R_0)\ (0<R_0\le +\infty)$. 
Let $f^1,f^2,\ldots,f^k$ be $m$ meromorphic mappings from the complete K\"{a}hler manifold $\B^m(1)$ into $\P^n(\C)$, which satisfies the condition $(C_\rho)$ for a non-negative number $\rho$. 
For each $1\le u\le k$, we fix a reduced representation $f^u=(f^u_0:\cdots :f^u_n)$ of $f^u$ and set $\|f^u\|=(|f^u_0|^2+\cdots+|f^u_n|^2)^{1/2}$.

We denote by $\mathcal C(\B^m(R_0))$ the set of all non-negative functions $g: \B^m(R_0)\to [0,+\infty]$ which are continuous outside an analytic set of codimension two (corresponding to the topology of the compactification $[0,+\infty]$) and only attain $+\infty$ in an analyic thin set.

\begin{definition}[Functions of small integration]\label{3.1}
A function $g$ in $\mathcal C(\B^m(R_0))$ is said to be of small integration with respective to $f^1,\ldots,f^k$ at level $l_0$ if there exist an element $\alpha=(\alpha_1,\ldots,\alpha_m)\in\N^m$ with $|\alpha|\le l_0$,  a positive number $K$, such that for every $0\le tl_0<p<1$,
$$\int_{S(r)}|z^\alpha g|^t\sigma_m \le K\left(\frac{R^{2m-1}}{R-r}\sum_{u=1}^kT_{f^u}(r,r_0)\right)^p$$
for all $r$ with $0<r_0<r<R<R_0$, where $z^\alpha=z_1^{\alpha_1}\cdots z_m^{\alpha_m}$. %outside a subset $E$ with $\int_E\frac{1}{1-r}dr <+\infty.$\
\end{definition}

%Remark: We already applied the method of the early version of this paper to study the algebraic dependence of meromorphic mappings in \cite{Q19}. In that paper, we only give the definition of ``function of small integration'' for non-negative plurisubharmonic functions, and hence the set of functions of small integration may not large enough to contain all neccesary functions. Therefore, in the current version of this paper, we re-define this notion for all functions in $\mathcal C(\B^m(R_0))$. However, the proofs in \cite{Q19} are not effected and still work.

We denote by $S(l_0;f^1,\ldots,f^k)$ the set of all functions in $\mathcal C(\B^m(R_0))$ which are of small integration with respective to $f^1,\ldots,f^k$ at level $l_0$. We see that, if $g$ belongs to $S(l_0;f^1,\ldots,f^k)$ then $g$ is also belongs to $S(l;f^1,\ldots,f^k)$ for every $l>l_0$. Moreover, if $g$ is a constant function then $g\in S(0;f^1,\ldots,f^k)$.

\begin{proposition}\label{3.2}
If $g_i\in S(l_i;f^1,\ldots,f^l)\ (1\le i\le s)$ then $\prod_{i=1}^sg_i\in S(\sum_{i=1}^sl_i;f^1,\ldots,f^l)$.
\end{proposition}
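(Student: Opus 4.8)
\textbf{Proof proposal for Proposition \ref{3.2}.}

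The plan is to reduce to the case $s=2$ by induction, using the closure property noted before the statement (namely that $S(l_0;f^1,\ldots,f^k)\subset S(l;f^1,\ldots,f^k)$ whenever $l>l_0$, which lets me absorb the intermediate levels). So the heart of the matter is: if $g_1\in S(l_1;f^1,\ldots,f^k)$ and $g_2\in S(l_2;f^1,\ldots,f^k)$, then $g_1g_2\in S(l_1+l_2;f^1,\ldots,f^k)$. Fix admissible multi-indices $\alpha^{(1)},\alpha^{(2)}\in\N^m$ with $|\alpha^{(i)}|\le l_i$, positive constants $K_1,K_2$, and the defining inequalities from Definition \ref{3.1}. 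Put $\alpha=\alpha^{(1)}+\alpha^{(2)}$, so $|\alpha|\le l_1+l_2$ and $z^\alpha=z^{\alpha^{(1)}}z^{\alpha^{(2)}}$. I need to bound $\int_{S(r)}|z^\alpha g_1g_2|^t\sigma_m$ for every $t$ with $0<t(l_1+l_2)<p<1$.

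The key step is to split the exponent and apply Hölder's inequality on $S(r)$ with $\sigma_m$, writing $|z^\alpha g_1g_2|^t=|z^{\alpha^{(1)}}g_1|^t\cdot|z^{\alpha^{(2)}}g_2|^t$ and using conjugate exponents $2$ and $2$:
$$\int_{S(r)}|z^\alpha g_1g_2|^t\sigma_m\le\left(\int_{S(r)}|z^{\alpha^{(1)}}g_1|^{2t}\sigma_m\right)^{1/2}\left(\int_{S(r)}|z^{\alpha^{(2)}}g_2|^{2t}\sigma_m\right)^{1/2}.$$
Now the point is that $2t$ is still a legitimate exponent for each factor: from $t(l_1+l_2)<p<1$ we get $2t\cdot l_i\le 2t(l_1+l_2)<2p$, and we may replace $p$ by $\min\{2p,1^-\}$ — more precisely, since we only need the estimate for exponents below $1$, for any $p<1$ we can find $p'<1$ with $2t l_i<p'$ provided $t l_i<1/2$, which holds because $t(l_1+l_2)<p<1$ forces $t l_i<1/2$ when... here I should be slightly careful: I actually want to choose the target exponent in the conclusion first. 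Given the desired $p<1$ with $t(l_1+l_2)<p$, I apply the hypothesis for $g_i$ with exponent $2t$ and with its own auxiliary exponent $p_i$ chosen so that $2t l_i<p_i<1$; such $p_i$ exists because $2t l_i\le 2t(l_1+l_2)<2p$, and I may take $p_i$ as close to $2t(l_1+l_2)$ as I like, in particular $p_i<1$ is available whenever $t(l_1+l_2)<1/2$. To handle all $t$ with $t(l_1+l_2)<p$ including $p$ near $1$, I use that the estimate at a larger exponent $p$ follows from the estimate at a smaller one by monotonicity of the $L^{p}$-type bound in $p$ (the right-hand side $\big(\tfrac{R^{2m-1}}{R-r}\sum_u T_{f^u}\big)^{p}$ is increasing in $p$ once that quantity is $\ge 1$, which holds as $r\to R$; the bounded-$r$ range is absorbed into the constant $K$).

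Carrying this out, each factor on the right is bounded by $\big(K_i\big)^{1/2}\big(\tfrac{R^{2m-1}}{R-r}\sum_u T_{f^u}(r,r_0)\big)^{p_i/2}$, and since $p_1/2+p_2/2$ can be arranged to equal any prescribed $p$ with $t(l_1+l_2)<p<1$ (choose $p_1=p_2=p$), the product is bounded by $\sqrt{K_1K_2}\,\big(\tfrac{R^{2m-1}}{R-r}\sum_u T_{f^u}(r,r_0)\big)^{p}$. Setting $K=\sqrt{K_1K_2}$ gives exactly the inequality in Definition \ref{3.1} for $g_1g_2$ at level $l_1+l_2$, completing the case $s=2$; the general case follows by an immediate induction on $s$, using Proposition \ref{3.2} for $s-1$ factors together with the two-factor case and the level-monotonicity remark. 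The only delicate point — and the one I would write out most carefully — is the bookkeeping of exponents: verifying that whenever $0<t\cdot\sum_i l_i<p<1$ one can legitimately invoke each hypothesis with exponent $2t$ (or more generally with exponents summing appropriately), which is where Hölder with matched conjugate exponents and the freedom in the auxiliary exponent $p$ are used in tandem. No genuine obstacle is expected beyond this; the statement is essentially a stability lemma for the class $S(l_0;f^1,\ldots,f^k)$ under products.
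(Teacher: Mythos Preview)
Your Cauchy--Schwarz split (conjugate exponents $2$ and $2$) is the wrong choice of H\"older weights, and this is a genuine gap rather than a bookkeeping nuisance. After the split you must bound $\int_{S(r)}|z^{\alpha^{(i)}}g_i|^{2t}\sigma_m$ via Definition~\ref{3.1}, which requires $2t\,l_i<p_i<1$. But the hypothesis $t(l_1+l_2)<p<1$ only gives $t(l_1+l_2)<1$, and when $l_1\neq l_2$ this does \emph{not} force $2t\max(l_1,l_2)<1$. Concretely, take $l_1=1$, $l_2=3$, $t=0.2$, $p=0.9$: then $t(l_1+l_2)=0.8<p$, yet $2t\,l_2=1.2>1$, so there is no admissible $p_2<1$ and the hypothesis for $g_2$ cannot be invoked at exponent $2t$ at all. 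Your monotonicity-in-$p$ patch cannot rescue this, because the obstruction occurs at a fixed value of $t$ for which your argument produces no bound whatsoever; there is nothing to upgrade.

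The paper's proof fixes this by choosing the H\"older exponents to match the levels: it applies H\"older with conjugate exponents $q_i=\bigl(\sum_j l_j\bigr)/l_i$, so the inner exponent on $|z^{\alpha^{(i)}}g_i|$ becomes $t'=t\sum_j l_j/l_i$, and then $t' l_i = t\sum_j l_j < p$ is exactly the standing hypothesis. With this weighting one may take every auxiliary exponent equal to $p$ and the product of the bounds collapses to $K\bigl(\tfrac{R^{2m-1}}{R-r}\sum_u T_{f^u}\bigr)^{p}$ directly, for all $s$ factors at once, with no induction and no monotonicity argument needed. If you want to keep the two-factor-plus-induction structure, the same weighted choice $q_1=(l_1+l_2)/l_1$, $q_2=(l_1+l_2)/l_2$ works; only the equal-weight choice $q_1=q_2=2$ fails.
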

\begin{proof}
We take the element $\alpha^i=(\alpha^i_1,\ldots,\alpha^i_m)$ with respect to $g_i$ as in the above definition. Then, for every $1\le t\sum_{i=1}^sl_i<p<1$, by H\"{o}lder inequality we have
\begin{align*}
 \int_{S(r)}|z^{\alpha^1+\cdots +\alpha^s}g_1\cdots g_s|^t\sigma_m&\le \prod_{i=1}^s\left (\int_{S(r)}|z^{\alpha^i}g_i|^{(t\sum_{j=1}^sl_j)/l_i}\sigma_m\right)^{l_i/\sum_{j=1}^sl_j}\\
&\le \left (K\left(\frac{R^{2m-1}}{R-r}\sum_{u=1}^kT_{f^u}(r,r_0)\right)^p\right)^{\sum_{i=1}^sl_i/\sum_{j=1}^sl_j}\\
&=K\left(\frac{R^{2m-1}}{R-r}\sum_{u=1}^kT_{f^u}(r,r_0)\right)^p,
\end{align*}
for every $r$, $0<r_0<r<R<1$.
Therefore, $g_1\cdots g_s\in R(\sum_{i=1}^sl_i;f^1,\ldots,f^k)$.
\end{proof}

\begin{definition}[Functions of bounded integration]\label{3.3} 
A meromorphic function $h$ on $\B^m(R_0)$ is said to be of bounded integration with bi-degree $(p,l_0)$ for the family $\{f^1,\ldots,f^k\}$ if there exists $g\in S(l_0;f^1,\ldots,f^k)$ satisfying
$$|h|\le \|f^1\|^p\cdots \|f^u\|^p\cdot g,$$
outside a proper analytic subset of $\B^m(R_0)$.
\end{definition}
Denote by $B(p,l_0;f^1,\ldots,f^k)$ the set of all meromorphic functions on $\B^m(R_0)$ which are of bounded integration of bi-degree $(p,l_0)$ for $\{f^1,\ldots,f^k\}$. We have the following:
\begin{itemize}
\item For a meromorphic mapping $h$, $|h|\in S(l_0;f^1,\ldots,f^k)$ iff $h\in B(0,l_0;f^1,\ldots,f^k)$.
\item $B(p,l_0;f^1,\ldots,f^k)\subset B(p,l;f^1,\ldots,f^k)$ for every $0\le l_0<l$.
\item If $h_i\in B(p_i,l_i;f^1,\ldots,f^k)\ (1\le i\le s)$ then 
$$h_1\cdots h_m\in B(\sum_{i=1}^sp_i,\sum_{i=1}^sl_i;f^1,\ldots,f^k).$$
\end{itemize}

The following proposition is proved by Fujimoto \cite{Fu83} and reproved by Ru-Sogome \cite{RS}.
\begin{proposition}[{see \cite[Proposition 6.1]{Fu83}, also \cite[Proposition 3.3]{RS}}]\label{3.4}
 Let $L_1,\ldots ,L_l$ be linear forms of $l$ variables and assume that they are linearly independent. Let $F$ be a meromorphic mapping from the ball $\B^m(R_0)\subset\C^m$ into $\P^{l-1}(\C)$ with a reduced representation $F=(F_0,\ldots ,F_{l-1})$ and let $(\alpha_1,\ldots ,\alpha_l)$ be an admissible set of $F$. Set $l_0=|\alpha_1|+\cdots +|\alpha_l|$ and take $t,p$ with $0< tl_0< p<1$. Then, for $0 < r_0 < R_0,$ there exists a positive constant $K$ such that for $r_0 < r < R < R_0$,
$$\int_{S(r)}\biggl |z^{\alpha_1+\cdots +\alpha_l}\dfrac{W_{\alpha_1,\ldots ,\alpha_l}(F_0,\ldots ,F_{l-1})}{L_0(F)\ldots L_{l-1}(F)}\biggl |^t\sigma_m\le K\biggl (\dfrac{R^{2m-1}}{R-r}T_F(R,r_0)\biggl )^p.$$
\end{proposition}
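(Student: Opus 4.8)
The statement is the several–variable analogue of the lemma on the logarithmic derivative for Wronskians, and I would prove it by reducing it, in two essentially formal steps, to the classical one–variable estimate. \emph{Step 1 (normalising the linear forms).} Since $L_0,\dots,L_{l-1}$ are $l$ linearly independent linear forms in $l$ variables, there is an invertible matrix $A=(A_{ij})$ with $L_i(w)=\sum_j A_{ij}w_j$. Setting $\tilde F=AF=(\tilde F_0,\dots,\tilde F_{l-1})$, the tuple $\tilde F$ is a reduced representation of the composition of $F$ with the projective automorphism induced by $A$, so $\tilde F_i=L_i(F)$, $T_{\tilde F}(R,r_0)=T_F(R,r_0)+O(1)$, and since each column of the new Wronskian matrix is a linear combination of the columns of the old one, $W_{\alpha_1,\dots,\alpha_l}(\tilde F_0,\dots,\tilde F_{l-1})=\det(A)\,W_{\alpha_1,\dots,\alpha_l}(F_0,\dots,F_{l-1})$. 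Hence, after absorbing $|\det A|^{-t}$ into the final constant, it is enough to prove the inequality for the coordinate forms, i.e. to bound
$$\int_{S(r)}\Bigl|z^{\alpha_1+\cdots+\alpha_l}\,\frac{W_{\alpha_1,\dots,\alpha_l}(F_0,\dots,F_{l-1})}{F_0\cdots F_{l-1}}\Bigr|^t\sigma_m .$$

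\emph{Step 2 (reduction to one function).} Dividing the $j$-th column of the Wronskian determinant by $F_{j-1}$ gives $W_{\alpha_1,\dots,\alpha_l}(F_0,\dots,F_{l-1})/(F_0\cdots F_{l-1})=\det\bigl(\mathcal{D}^{\alpha_i}F_{j-1}/F_{j-1}\bigr)_{1\le i,j\le l}$, so the integrand is dominated by $\sum_{\sigma}\prod_{i=1}^{l}\bigl|z^{\alpha_i}\,\mathcal{D}^{\alpha_i}F_{\sigma(i)-1}/F_{\sigma(i)-1}\bigr|^{t}$, the sum over permutations $\sigma$ of $\{1,\dots,l\}$; the factors with $|\alpha_i|=0$ are identically $1$. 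For each $\sigma$ I would apply H\"older's inequality with exponents $q_i\ge 1$, $\sum_i q_i^{-1}=1$. Because the hypothesis $tl_0<p$ leaves slack, one may choose the $q_i$ together with numbers $p_i<1$ satisfying $tq_i|\alpha_i|<p_i$ and $\sum_i p_i/q_i=p$, so that each factor of the H\"older product becomes an integral $\int_{S(r)}|z^{\beta}\,\mathcal{D}^{\beta}\varphi/\varphi|^{s}\sigma_m$ with $\varphi$ one of the $F_j$ (holomorphic) and $0<s|\beta|<1$. Using the standard bound $T_{F_j}(R,r_0)\le T_F(R,r_0)+O(1)$ (valid because $|F_j|\le\|F\|$ for a reduced representation), the whole proposition follows from the one–function estimate: for $\varphi$ holomorphic on $\B^m(R_0)$, a multi–index $\beta$ and $0<s|\beta|<s'<1$,
$$\int_{S(r)}\Bigl|z^{\beta}\,\frac{\mathcal{D}^{\beta}\varphi}{\varphi}\Bigr|^{s}\sigma_m\le K\Bigl(\frac{R^{2m-1}}{R-r}\,T_\varphi(R,r_0)\Bigr)^{s'}\qquad(r_0<r<R<R_0).$$

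\emph{Step 3 (the one–function estimate) and the main difficulty.} This is the several–complex–variable lemma on the logarithmic derivative, and it is the genuinely hard part. One first proves the proximity–function bound $\int_{S(r)}\log^+|\mathcal{D}^{\beta}\varphi/\varphi|\,\sigma_m\le C\bigl(\log^+T_\varphi(R,r_0)+\log\tfrac{R}{R-r}+\log^+\tfrac{1}{r_0}+1\bigr)$ by slicing $\B^m(R_0)$ along the complex lines through the origin, applying the classical one–variable logarithmic derivative estimate (a consequence of the Poisson–Jensen formula) on each line, and integrating over the directions with Fubini; the power $R^{2m-1}$ arises from comparing $\sigma_m$ on $S(r)$ with the product of spherical measure on directions and planar Lebesgue measure on the lines. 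One then upgrades this $\log^+$–bound to the $L^{s}$–bound with the prescribed exponent $s'<1$: expressing the $L^s$–integral through its distribution function and splitting the range of integration, the large–value part is controlled by the $\log^+$–estimate together with the Cauchy–kernel tail behaviour of $\mathcal{D}^{\beta}\varphi/\varphi$ (again from Poisson–Jensen), while a convexity/Borel growth lemma replaces $\log^+T_\varphi$ by a small power of $T_\varphi$. I expect Steps 1 and 2 to be routine; the main obstacle is Step 3, in particular keeping exact control of the $R^{2m-1}/(R-r)$ factor through the slicing and obtaining the $L^{s}$ estimate for an \emph{arbitrary} exponent $s'<1$ — it is precisely this flexibility that makes the exponent bookkeeping of Step 2 go through. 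The details are carried out in Fujimoto \cite{Fu83} and, in this form, in Ru–Sogome \cite{RS}.
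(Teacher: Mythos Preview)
The paper does not give its own proof of Proposition~\ref{3.4}; it merely quotes the result with references to Fujimoto and Ru--Sogome, and then uses it. Your outline---reducing to the coordinate forms by a linear change, expanding the Wronskian as a determinant of logarithmic derivatives, applying H\"older to reduce to a single logarithmic derivative, and finally invoking the several-variable lemma on the logarithmic derivative via slicing---is precisely the approach carried out in those references, so there is nothing to compare beyond noting that your sketch matches the cited proofs.
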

This proposition implies that the function $\left |\dfrac{W_{\alpha_1,\ldots ,\alpha_l}(F_0,\ldots ,F_{l-1})}{L_0(F)\ldots L_{l-1}(F)}\right |$ belongs to $S(l_0;F)$.

We will prove the following proposition, which can be considered as a general form of finiteness theorems for meromorphic mappings on K\"{a}hler manifold into projective space.
\begin{proposition}\label{3.5} 
	Let $M$ be a complete connected K\"{a}hler manifold whose universal covering is biholomorphic to a ball $\B^m(R_0)\ (0<R_0\le +\infty)$. Let $f^1,f^2,\ldots,f^k$ be $m$ linearly non-degenerate meromorphic mappings from $M$ into $\P^n(\C)$, which satisfy the condition $(C_\rho)$. Let $H_1,\ldots,H_q$ be $q$ hyperplanes of $\P^n(\C)$ in general position, where $q$ is a positive integer. Assume that there exists a non zero holomorphic function $h\in B(p,l_0;f^1,\ldots,f^k)$ such that
	$$\nu_h\ge\lambda\sum_{u=1}^k\sum_{i=1}^q\nu^{[n]}_{(f^u,H_i)},$$
	where $p,l_0$ are non-negative integers, $\lambda$ is a positive number. Then we have
	$$q\le n+1+\rho k\frac{n(n+1)}{2}+\frac{1}{\lambda}\left (p+\rho l_0\right).$$
\end{proposition}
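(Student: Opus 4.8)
The plan is to exploit the condition $(C_\rho)$ to produce, on the universal covering $\B^m(R_0)$, a nonzero holomorphic function whose logarithm controls both the Ricci form and the pulled-back Fubini--Study forms, and then to run a Jensen/Nevanlinna argument comparing the zero divisor estimate for $h$ against the characteristic functions. First I would pass to the universal covering $\pi\colon\B^m(R_0)\to M$, pull back everything, and work with the lifts $\tilde f^u$; since $(C_\rho)$ gives a bounded $h_0$ on $M$ with $\rho\,\Omega_{f^u}+dd^c\log h_0^2\ge\mathrm{Ric}\,\omega$, the standard device (as in Fujimoto and Ru--Sogome) yields a holomorphic function whose divisor and growth tie $\sum_u T_{f^u}$ to the Euclidean structure. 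The key inequality to set up is a pointwise bound, valid outside a thin set, of the form
$$
|h|\cdot\prod_{u=1}^{k}\|f^u\|^{\,?}\ \le\ (\text{a function in }S(\cdot;f^1,\dots,f^k))\cdot\prod_{u=1}^{k}\bigl|W_{\alpha^u}(f^u_0,\dots,f^u_n)\bigr|^{-1}\cdot\prod_{u,i}|(f^u,H_i)|,
$$
obtained by combining the hypothesis $\nu_h\ge\lambda\sum_{u,i}\nu^{[n]}_{(f^u,H_i)}$ with the elementary fact that each general Wronskian $W_{\alpha^u}(f^u_0,\dots,f^u_n)$ vanishes on $f^{u,-1}(H_i)$ to order at least $\max\{0,\nu_{(f^u,H_i)}-n\}$, so that $\sum_i\nu^{[n]}_{(f^u,H_i)}$ is bounded below by $\bigl(\sum_i\nu_{(f^u,H_i)}\bigr)-\nu_{W_{\alpha^u}}$ up to the right bookkeeping.

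Next I would invoke Proposition \ref{3.4}: for each $u$, the function $\bigl|z^{\alpha^u}\,W_{\alpha^u}(f^u_0,\dots,f^u_n)/\prod_{i=1}^{q}(f^u,H_i)^{\,}\bigr|$—after selecting $n+1$ of the $q$ hyperplanes to form a basis and absorbing the remaining $q-n-1$ linear forms—lies in $S(l_0^u;f^u)$, hence by Proposition \ref{3.2} the product over $u=1,\dots,k$ lies in $S\bigl(\sum_u l_0^u;f^1,\dots,f^k\bigr)$. Combining this with the membership $h\in B(p,l_0;f^1,\dots,f^k)$ and with the pointwise inequality above shows that a suitable power of $\prod_{u,i}\|f^u\|\big/\|f^u\|^{\text{(something)}}$ times a small-integration function dominates a constant; integrating over $S(r)$, applying Jensen's formula to convert $\int_{S(r)}\log\|f^u\|\,\sigma_m$ into $T_{f^u}(r,r_0)+O(1)$, and using the defining growth bound $\int_{S(r)}|z^\alpha g|^t\sigma_m\le K\bigl(\frac{R^{2m-1}}{R-r}\sum_u T_{f^u}(r,r_0)\bigr)^p$ for the small-integration factor, yields an inequality of the shape
$$
\Bigl(q-n-1-\rho k\tfrac{n(n+1)}{2}-\tfrac1\lambda(p+\rho l_0)\Bigr)\sum_{u=1}^{k}T_{f^u}(r,r_0)\ \le\ (\text{lower order in }r),
$$
valid as $r\to R_0$. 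If the coefficient on the left were positive, the right-hand side would have to dominate $\sum_u T_{f^u}$, which is impossible (the $f^u$ are linearly non-degenerate, so their characteristic functions are unbounded, and the error term is genuinely of smaller growth by the choice $0<tl_0<p<1$); this forces $q\le n+1+\rho k\frac{n(n+1)}{2}+\frac1\lambda(p+\rho l_0)$.

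The main obstacle I expect is the careful accounting of the $\rho$-term: one must track how the $k$ factors of $dd^c\log h_0^2$ and the $k$ pullbacks $\rho\,\Omega_{f^u}$ enter when one estimates $\nu$ of the product of Wronskians against $\mathrm{Ric}\,\omega$ on the covering, and show precisely that the contribution to the bound is $\rho k\cdot n(n+1)/2$ — the factor $n(n+1)/2=|\alpha^u|$-budget $0+1+\cdots+n$ coming from the admissible set in Proposition \ref{2.1} — together with the $\rho l_0$ coming from the bi-degree of $h$. A secondary technical point is making the "outside a thin/analytic set" exceptional loci harmless when integrating over spheres $S(r)$ and applying Jensen, and ensuring the constant $R$ versus $r$ juggling in the $\frac{R^{2m-1}}{R-r}$ factor is done so that letting $R\downarrow r$ (or $R\to R_0$) after integrating keeps the error term of order $o\bigl(\sum_u T_{f^u}\bigr)$; this is exactly the place where the hypothesis $tl_0<p<1$ and completeness of the Kähler metric (so that the covering is the full ball and the growth estimates are global) are used.
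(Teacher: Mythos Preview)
Your Jensen/second-main-theorem strategy recovers the case $R_0=+\infty$ and, when $R_0<\infty$, the sub-case in which $\sum_u T_{f^u}(r,r_0)$ grows faster than $\log\frac{1}{R_0-r}$. The paper indeed argues exactly this way in that regime: after taking logs of the small-integration bound one gets $\int_{S(r)}\log|g|\,\sigma_m=O(\log^+\frac{1}{R_0-r}+\log^+\sum_u T_{f^u})$, which is $o(\sum_u T_{f^u})$ precisely when the characteristic functions dominate $\log\frac{1}{R_0-r}$, and then the comparison with $N_h$ and the second main theorem yields the inequality $\frac{p}{\lambda}\ge q-n-1$.

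The genuine gap is the complementary regime $\limsup_{r\to R_0}\frac{\sum_u T_{f^u}(r,r_0)}{\log 1/(R_0-r)}<\infty$. Linear non-degeneracy does \emph{not} force $T_{f^u}$ to outgrow $\log\frac{1}{R_0-r}$ on a finite ball, so your sentence ``the error term is genuinely of smaller growth'' is false here: the error is of the same order as $\sum_u T_{f^u}$ and the Jensen comparison gives nothing. For this case the paper uses a different mechanism that your outline never mentions. One supposes $q>n+1+\rho k\frac{n(n+1)}{2}+\frac{1}{\lambda}(p+\rho l_0)$, sets $t=\rho/(q-n-1-p/\lambda)$, and builds a plurisubharmonic function
\[
\varphi=\sum_{u=1}^k\varphi_u+t\log\Bigl(|z^\beta g|^{1/\lambda}\prod_{u=1}^k\Bigl|z^{\alpha^u}\frac{W(f^u)}{\prod_i(f^u,H_i)}\Bigr|\Bigr),
\]
where the $\varphi_u$ come from $(C_\rho)$ via $e^{\varphi_u}dV\le\|f^u\|^{\rho}v_m$. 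The hypothesis $\nu_h\ge\lambda\sum\nu^{[n]}$ is what makes the bracketed expression (times a suitable power of $h$) holomorphic, hence its log is psh. One then shows $\int_{\B^m(R_0)}e^{\varphi}\,dV<\infty$ using Proposition~\ref{3.4} and H\"older; completeness of the K\"ahler metric then contradicts the Yau--Karp theorem on non-integrability of $e^{\varphi}$ for nonconstant psh $\varphi$. The numerology $q-n-1>\rho k\frac{n(n+1)}{2}+\frac{1}{\lambda}(p+\rho l_0)$ is exactly what makes $(k\frac{n(n+1)}{2}+\frac{l_0}{\lambda})t<1$, so that the exponents in Proposition~\ref{3.4} and the small-integration bound are admissible. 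Your proposal identifies the right building blocks (Wronskians, Proposition~\ref{3.4}, the bi-degree of $h$) but misses this volume-integrability contradiction, which is the essential use of both completeness and $(C_\rho)$ in the finite-ball case.
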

\begin{proof} Without loss of generality, we may assume that $M=\B^m(R_0)$. 

If $R_0=+\infty$, by the second main theorem we have
\begin{align*}
(q-n-1)\sum_{u=1}^kT_{f^u}(r,1)&\le\sum_{u=1}^k\sum_{i=1}^qN^{[n]}_{(f^u,H_i)}(r,1)+o(\sum_{u=1}^kT_{f^u}(r,1))\\
&\le \frac{1}{\lambda}N_h(r,1)+o(\sum_{u=1}^kT_{f^u}(r,1))\\
&=\frac{p}{\lambda}\sum_{u=1}^kT_{f^u}(r,1)+o(\sum_{u=1}^kT_{f^u}(r,1)),
\end{align*}
for all $r\in [1;+\infty)$ outside a Lebesgue set of finite measure. 
Letting $r\rightarrow +\infty$, we obtain
$$ q\le n+1+\frac{p}{\lambda}.$$

Now, we consider the case where $R_0<+\infty$. Without loss of generality we assume that $R_0=1$. Suppose contrarily that $q>n+1+\rho k\dfrac{n(n+1)}{2}+\dfrac{1}{\lambda}\left (p+\rho l_0\right)$. Then, there is a positive constant $\epsilon$ such that
	$$q> n+1+\rho k\frac{n(n+1)}{2}+\frac{1}{\lambda}\left (p+\rho (l_0+\epsilon)\right).$$ 
	Put $l_0'=l_0+\epsilon >0$.

	Suppose that $f^u$ has a reduced representation $f^u=(f_{0}^u:\cdots :f_{n}^u)$ for each $1\le u\le k$. Since $f^u$ is linearly non-degenerate, there exists an admissible set $(\alpha_0^u,\ldots,\alpha_n^u)\in\mathbb (\N^m)^{n+1}$ with $|\alpha_i^u|\le i\ (0\le i\le n)$ such that the general Wronskian
	$$ W(f^u):=\det\left (\mathcal D^{\alpha_i^u}(f_j^u); 0\le i,j\le n\right)\not\equiv 0. $$
	By usual argument of Nevanlinna theory, we have
	$$\nu_h\ge \lambda\sum_{u=1}^k\sum_{i=1}^q\nu^{[n]}_{(f^u,H_i)}\ge\lambda\sum_{u=1}^k\left (\sum_{i=1}^q\nu_{(f^u,H_i)}-\nu_{W(f^u)}\right).$$
	Put $w_u(z):=z^{\alpha_0^u+\cdots +\alpha_n^u}\dfrac{W(f^u)}{\prod_{i=1}^{q}(f,H_i)}\ (1\le u\le 3)$. Since $h\in B(p,l_0;f^1,\ldots,f^k)$, there exists a non-negative plurisubharmonic function $g\in S(l_0;f^1,\ldots,f^k)$ and $\beta =(\beta_1,\ldots,\beta_m)\in\mathbb Z^{m}_+$ with $|\beta|\le l_0$ such that 
	\begin{align}\label{3.6}
	\int_{S(r)}\left |z^\beta g\right|^{t'}\sigma_m=O\left (\frac{R^{2m-1}}{R-r}\sum_{u=1}^kT_{f^u}(r,r_0)\right )^l,
	\end{align}
for every $0\le l_0t'<l<1$ and 
\begin{align}\label{3.7}
|h|\le \left (\prod_{u=1}^k\|f^u\|\right )^p |g|.
\end{align}

%Then $\log\left (|v_1|\cdots |v_k|\cdot |z^\beta|^{1/\lambda}\cdot |h|^{1/\lambda}\right )$ is a plurisubharmonic funtion. 
	Put $ t=\frac{\rho}{q-n-\frac{p}{\lambda}-1}>0$ and $ \phi:=|w_1|\cdots |w_k|\cdot|z^\beta h|^{1/\lambda}.$
	Then $a=t\log\phi$ is a plurisubharmonic function on $\B^m(1)$ and
	$$\left (k\frac{n(n+1)}{2}+\frac{l_0'}{\lambda}\right)t< 1.$$
	Therefore, we may choose a positive number $p'$ such that $0\le (k\frac{n(n+1)}{2}+\frac{l_0'}{\lambda})t<p'<1.$
	
	Since $f^u$ satisfies the condition $(C_\rho)$, then there exists a continuous plurisubharmonic function $\varphi_u$ on $\B^m(1)$ such that
	$$ e^{\varphi_u}dV \le \|f^u\|^{\rho}v_m.$$
	We see that $\varphi=\varphi_1+\cdots+\varphi_k+a$ is a plurisubharmonic function on $\B^m(1)$. We have
	\begin{align*}
	e^\varphi dV&=e^{\varphi_1+\cdots+\varphi_k+t\log\phi}dV\le e^{t\log\phi}\prod_{u=1}^k\|f^u\|^{\rho}v_m=|\phi|^{t}\prod_{u=1}^k\|f^u\|^{\rho}v_m\\
	&=|z^\beta g|^{t/\lambda}\prod_{u=1}^k(|w_u|^t\cdot\|f^u\|^{\rho+pt/\lambda})v_m=|z^\beta g|^{t/\lambda}\prod_{u=1}^k(|w_u|^t\cdot\|f^u\|^{(q-n-1)t})v_m.
	\end{align*}
	Setting $x=\dfrac{l_0'/\lambda}{kn(n+1)/2+l_0'/\lambda}$, $y=\dfrac{n(n+1)/2}{kn(n+1)/2+l_0'/\lambda}$, then we have $x+ky=1$. Therefore, by integrating both sides of the above inequality over $\B^m(1)$ and applying H\"{o}lder inequality,  we have
	\begin{align}\label{3.8}
	\begin{split}
	\int_{\B^m(1)}e^udV&\le \int_{\B^m(1)}\prod_{u=1}^k(|w_u|^t\cdot\|f^u\|^{(q-n-1)t})|z^\beta g|^{t/\lambda}v_m\\
	&\le \left (\int_{\B^m(1)}|z^\beta g|^{t/(\lambda x)}v_m\right)^{x}\\
	&\ \ \times\prod_{u=1}^k\left (\int_{\B^m(1)}(|w_u|^{t/y}\cdot\|f^u\|^{(q-n-1)t/y})v_m\right)^{y}\\
	&\le \left (2m\int_0^1r^{2m-1}\left (\int_{S(r)}|z^\beta g|^{t/(\lambda x)}\sigma_m\right )dr\right)^{x}\\
	&\ \ \times\prod_{u=1}^k\left (2m\int_0^1r^{2m-1}\left (\int_{S(r)}\bigl (|w_u|\cdot\|f^u\|^{(q-n-1)}\bigl )^{t/y}\sigma_m\right )dr\right)^{y}.
	\end{split}
	\end{align}
	
	(a) We now deal with the case where
	$$ \lim\limits_{r\rightarrow 1}\sup\dfrac{\sum_{u=1}^kT_{f^u}(r,r_0)}{\log 1/(1-r)}<\infty .$$
	We see that $\dfrac{l_0t}{\lambda x}\le \dfrac{l_0't}{\lambda x}=\bigl (k\dfrac{n(n+1)}{2}+\dfrac{l_0'}{\lambda}\bigl )t<p'$ and $\dfrac{n(n+1)}{2}\dfrac{t}{y}=\bigl (k\dfrac{n(n+1)}{2}+\dfrac{l_0'}{\lambda}\bigl )t<p'$.  By \cite[Proposition 6.1]{Fu85} and (\ref{3.6}), there exists a positive constant $K$ such that, for every $0<r_0<r<r'< 1,$ we have
	\begin{align*}
	&\int_{S(r)}\bigl (|w_u|\cdot\|f^u\|^{(q-n-1)}\bigl )^{t/y}\sigma_m\le K\left (\dfrac{r'^{2m-1}}{r'-r}T_{f^u}(r',r_0)\right )^{p'}\ (1\le u\le k)\\
	\text{and }&\int_{S(r)}|z^\beta g|^{t/(\lambda x)}\sigma_m\le K\left (\dfrac{r'^{2m-1}}{r'-r}\sum_{u=1}^kT_{f^u}(r',r_0)\right )^{p'}.
	\end{align*}
		
	Choosing $r'=r+\dfrac{1-r}{e\max_{1\le u\le k}T_{f^u}(r,r_0)}$, we have
	$ T_{f^u}(r',r_0)\le 2T_{f^u}(r,r_0)$,
	for all $r$ outside a subset $E$ of $(0,1]$ with $\int_E\frac{1}{1-r}dr<+\infty$.
	Hence, the above inequality implies that
	\begin{align*}
	&\int_{S(r)}\bigl (|w_u|\cdot\|f^u\|^{(q-n-1)}\bigl )^{t/y}\sigma_m\le \dfrac{K'}{(1-r)^{p'}}\left (\log\frac{1}{1-r}\right )^{2p'}\ (1\le u\le k)\\
	\text{and }&\int_{S(r)}|z^\beta g|^{t/(\lambda x)}\sigma_m\le \dfrac{K'}{(1-r)^{p'}}\left (\log\frac{1}{1-r}\right )^{2p'}
	\end{align*}
	for all $r$ outside $E$, and for some positive constant $K'$. Then the inequality (\ref{3.8}) yields that
	\begin{align*}
	\int_{\B^m(1)}e^udV&\le 2m\int_0^1r^{2m-1}\frac{K'}{1-r}\left (\log\frac{1}{1-r}\right)^{2p'}dr< +\infty.
	\end{align*}
	This contradicts the results of S.T. Yau \cite{Y76} and L. Karp \cite{K82}. 
	
	(b) We now deal with the remaining case where 
	$$ \lim\limits_{r\rightarrow 1} \sup\dfrac{\sum_{u=1}^kT_{f^u}(r,r_0)}{\log 1/(1-r)}= \infty.$$
	As above, we have
	$$\int_{S(r)}|z^\beta g|^{t/(\lambda x)}\sigma_m\le K\left (\dfrac{1}{1-r}\sum_{u=1}^kT_{f^u}(r,r_0)\right )^{p'}$$
	for every $r_0<r<1.$
	By the concativity of logarithmic function, we have
	$$ \int_{S(r)}\log|z^\beta|^{t/(\lambda x)}\sigma_m+\int_{S(r)}\log|g|^{t/(\lambda x)}\sigma_m\le K''\left (\log^+\frac{1}{1-r}+\log^+\sum_{u=1}^kT_{f^u}(r,r_0)\right). $$
	This implies that
	$$\int_{S(r)}\log|g|\sigma_m= O\left (\log^+\frac{1}{1-r}+\log^+\sum_{u=1}^kT_{f^u}(r,r_0)\right) $$
	By \cite[proposition 6.2]{Fu85} and (\ref{3.7}), we have
	\begin{align*}
	\sum_{u=1}^kpT_{f^u}(r,r_0)+\int_{S(r)}\log|g|\sigma_m&\ge N_h(r,r_0)+S(r)\ge\lambda\sum_{u=1}^k\sum_{i=1}^qN_{(f,H_i)}^{[n]}(r,r_0)+S(r)\\ 
	&\ge \lambda\sum_{u=1}^k(q-n-1)T_{f^u}(r,r_0)+S(r), 
	\end{align*}
	where $S(r)=O(\log^+\frac{1}{1-r}+\log^+\sum_{u=1}^kT_{f^u}(r_0,r))$ for every $r$ excluding a set $E$ with $\int_E\frac{dr}{1-r}<+\infty$.
	Letting $r\rightarrow 1$, we get $\frac{p}{\lambda}>q-n-1$. This is a contradiction.
	Hence, the supposition is false The proposition is proved.
\end{proof}

\section{Proof of  Theorem \ref{1.1}} Since the case where $M=\C^m$ have already proved by the author in \cite{SDQ19}, without loss of generality, in this proof we only consider the case where $M=\mathbb B^m(1)$. 

Hence, $f$ is a linearly non-degenerate meromorphic mapping of $\B^m(1)$ into $\P^n(\C )$ and $H_1,\ldots ,H_{2n+2}$ be $2n+2$ hyperplanes of $\P^n(\C )$ in general position with
$$\dim f^{-1}(H_i)\cap f^{-1}(H_j) \le m-2 \quad (1 \le i<j \le 2n+2).$$ 

In order to prove Theorem \ref{1.1}, we need the following lemmas.
\begin{lemma}\label{4.1}
 If $q>2n+1+\dfrac{2n}{3n+1}+\rho\dfrac{(n^2+4q-3n)(6n+1)}{6n^2+2}$ then the following hold:
\begin{item}
\item[\indent (i)] $q>n+1+3\rho\dfrac{n(n+1)}{2}+\dfrac{3nq}{2q+2n-2}$.
\item[\indent (ii)] $q> n+1+3\rho\dfrac{n(n+1)+4(q-n)}{2}$.
\item[\indent (iii)] $q> n+1+3\rho\dfrac{n(n+2)}{2}+\dfrac{3n}{11n-6}((q-1)+\rho(q-1)).$
\item[\indent (iv)] $q> n+1+3\rho\dfrac{n(n+1)}{2}+\dfrac{3n(q^2+\rho q(q-2))}{6nq+(n-2)(q-2)+4q-6n-2}$.
\end{item}
\end{lemma}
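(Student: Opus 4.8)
The statement is a bundle of elementary numerical inequalities, so I will describe the common strategy rather than carry out all four computations.

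First, since the $\rho$-term in the hypothesis is nonnegative (note $n^2+4q-3n\ge 2$ for $n\ge 2$), the hypothesis already gives $q>2n+1+\frac{2n}{3n+1}$, hence $q\ge 2n+2$; this elementary lower bound on $q$ will be used repeatedly. Next, since $q$ occurs on both sides of the hypothesis — appearing linearly inside the $\rho$-term with coefficient $\dfrac{4(6n+1)}{6n^2+2}$ — I would rewrite the hypothesis as
\[
q\left(1-\frac{4\rho(6n+1)}{6n^2+2}\right)>2n+1+\frac{2n}{3n+1}+\frac{\rho(n^2-3n)(6n+1)}{6n^2+2},
\]
which makes it possible to trade the product $\rho q$ against the remaining quantities when estimating the right-hand sides of (i)--(iv).

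For each of (i)--(iv) the right-hand side has the shape $c_0(n)+\rho\,c_1(n)+\dfrac{A(n)\,q+\rho\,B(n)\,q(q-2)}{D(n,q)}$, with $D$ affine in $q$; in (i)--(iii) one has $B\equiv 0$, while the quadratic-in-$q$, $\rho$-weighted numerator occurs only in (iv). The plan for each part is: bound the trailing fraction from above using that it is monotone in $q$ on the range $q\ge 2n+2$ — so one replaces it either by its value at $q=2n+2$ or by its limit as $q\to\infty$, whichever makes the target inequality tight; this is exactly where the precise constants $3n+1$, $6n+1$, $6n^2+2$, $11n-6$ are needed. After this reduction each claim becomes an inequality of the form $q>(\text{affine in }q\text{ and }\rho)$, which after clearing denominators is a polynomial inequality in $n,q,\rho$; feeding in the rewritten hypothesis to eliminate $\rho q$ leaves a polynomial inequality that one verifies holds for every integer $n\ge 2$ and every admissible $\rho$.

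The only real difficulty is bookkeeping: one has to check, term by term, that the $q$-growth and the $\rho$-weight carried by the hypothesis dominate those appearing in (i)--(iv), and since the numerical constants are chosen sharply the reductions must be made with care; no idea beyond elementary algebra enters. In the write-up it would suffice to record the computation for, say, part (i) in full and observe that (ii)--(iv) are entirely analogous.
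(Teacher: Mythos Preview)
Your outline is in the right spirit and follows the paper's approach: first read off $q\ge 2n+2$, then rewrite the hypothesis in a convenient form and compare each of (i)--(iv) to it after simplifying the fractional terms. The paper's concrete move is to recast the assumption as
\[
q>n+1+\frac{3nq}{6n+1}+3\rho\,\frac{n(n+1)+4(q-n)}{2},
\]
so that (ii) drops out immediately; for (i) it uses $q\ge 2n+2$ to bound $\dfrac{3nq}{2q+2n-2}\le\dfrac{3nq}{6n+2}<\dfrac{3nq}{6n+1}$, and for (iv) it bounds the two pieces of the numerator by $\dfrac{3nq}{6n+1}$ and $6\rho(q-n)$ via the same kind of denominator estimate $6nq+(n-2)(q-2)+4q-6n-2>6nq+q$. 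Part (iii) the paper simply declares clear.

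One point where your description needs adjusting: you propose to bound each fraction by its value at $q=2n+2$ or its limit as $q\to\infty$. For (i), the fraction $\dfrac{3nq}{2q+2n-2}$ is \emph{increasing} in $q$, so the only endpoint-type upper bound is the limit $\tfrac{3n}{2}$; but $q>n+1+\tfrac{3n}{2}$ is not implied by the hypothesis at $\rho=0$, $n=2$, $q=6$ (the hypothesis gives $q>5+\tfrac{4}{7}$, yet $6\not>6$). What actually works---and what the paper does---is to bound only the \emph{denominator} from below using $q\ge 2n+2$, keeping the numerator $q$-dependent, so the fraction becomes linear in $q$ and can be absorbed into the rewritten hypothesis. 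Your later remark that the reduced inequality is ``affine in $q$'' suggests you had this in mind; just be aware that literal endpoint substitution is too crude here, and your write-up should carry out at least one of these denominator bounds explicitly rather than appeal to monotonicity alone.
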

\begin{proof}
 From the assumption, we have 
$$q>n+1+\dfrac{3nq}{6n+1}+3\rho\frac{n(n+1)+4(q-n)}{2},$$ 
and also $q\ge 2n+2.$ Then we have 
$$ \frac{3nq}{2q+2n-2}\le \frac{3nq}{6n+2}<\frac{3nq}{6n+1}.$$
This implies the inequality (i). 

The inequality (ii) and (iii) are clear. We now show that the inequality (iv) is also satisfied. Indeed, we have
$$\frac{3nq^2}{6nq+(n-2)(q-2)+4q-6n-2}< \frac{3nq^2}{6nq+q}=\frac{3nq}{6n+1}$$
and 
\begin{align*}
\frac{\rho q(q-2)}{6nq+(n-2)(q-2)+4q-6n-2}\le \frac{\rho q(q-2)}{6nq+q}=\rho\frac{q-2}{6n+1}\le 6\rho(q-n).
\end{align*}
This implies the inequality (iv).
\end{proof}

\begin{lemma}\label{4.2} 
Let $f$ be a linearly non-degenerate meromorphic mapping from $\mathbb B^m(1)$ into $\P^n(\C)$, which satisfies the condition $(C_\rho)$. Let $q$ be a positive integer with $q\ge 2n+1+\rho n(n+1)$. Then every mapping $g\in\mathcal F(f,\{H_i\}_{i=1}^{q},1)$ is linearly non-degenerate.
\end{lemma}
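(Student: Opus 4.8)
The plan is to argue by contradiction and reduce to Proposition \ref{3.5}. Suppose some $g\in\mathcal F(f,\{H_i\}_{i=1}^q,1)$ is linearly degenerate, fix a reduced representation $f=(f_0:\cdots:f_n)$, and let $V\subset\P^n(\C)$ be the smallest linear subspace containing the image of $g$, so that $\dim V=n'\le n-1$. Since every element of $\mathcal F(f,\{H_i\}_{i=1}^q,1)$ agrees with $f$ on $\bigcup_{i=1}^qf^{-1}(H_i)$, one has $f(z)=g(z)\in V$ for all $z$ in that union. Because $n'<n$ we may choose a hyperplane $\tilde H\subset\P^n(\C)$ with $V\subset\tilde H$ but, $f$ being linearly non-degenerate, with $\tilde H$ not containing the image of $f$; then $\psi:=(f,\tilde H)$ is a nonzero holomorphic function on the ball $\B^m(R_0)$. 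As a linear combination of the components of the reduced representation of $f$ it satisfies $|\psi|\le C\|f\|$ for a constant $C$, hence $\psi\in B(1,0;f)$; and since $\psi$ vanishes on $\bigcup_{i=1}^qf^{-1}(H_i)$ while the dimension hypothesis $\dim f^{-1}(H_i)\cap f^{-1}(H_j)\le m-2$ forces the reduced divisors of the $f^{-1}(H_i)$ to have no common irreducible component,
\begin{align*}
\nu_\psi\ \ge\ \sum_{i=1}^q\nu^{[1]}_{(f,H_i)}\ \ge\ \frac1n\sum_{i=1}^q\nu^{[n]}_{(f,H_i)}.
\end{align*}

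I would then apply Proposition \ref{3.5} with the single map $f$ (so $k=1$), $h=\psi$, $p=1$, $l_0=0$ and $\lambda=1/n$, which yields
\begin{align*}
q\ \le\ n+1+\rho\,\frac{n(n+1)}{2}+\frac1\lambda\bigl(p+\rho l_0\bigr)\ =\ 2n+1+\rho\,\frac{n(n+1)}{2}.
\end{align*}
Since $n\ge2$ gives $\tfrac{n(n+1)}2<n(n+1)$, this contradicts the hypothesis $q\ge 2n+1+\rho n(n+1)$ whenever $\rho>0$; when $\rho=0$ it only forces $q=2n+1$, a configuration that does not occur in the applications of the lemma (there $q>2n+1$) and that can otherwise be ruled out by rerunning, with $h=\psi$, the slow/fast growth dichotomy in the proof of Proposition \ref{3.5} (slow growth produces, via Yau--Karp, a plurisubharmonic function with finite integral on $\B^m(R_0)$, fast growth gives the contradiction on letting $r\to R_0$). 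Hence no linearly degenerate map can lie in $\mathcal F(f,\{H_i\}_{i=1}^q,1)$, which is the assertion.

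The step that needs real care is the divisor estimate for $\psi$: one must check that the set-theoretic vanishing of $\psi$ on $\bigcup_if^{-1}(H_i)$ already forces vanishing to order $\ge1$ along every irreducible component, and that the dimension hypothesis is exactly what makes $\sum_i\nu^{[1]}_{(f,H_i)}$ the honest reduced divisor supported on that union; one also needs the geometric fact that a hyperplane through $V$ can be chosen not to contain the image of the linearly non-degenerate map $f$. The remaining delicate point, as usual in this circle of results, is the borderline case $\rho=0$, $q=2n+1$, where Proposition \ref{3.5} is no longer a strict inequality and one has to appeal directly to the Yau--Karp volume estimate on $\B^m(R_0)$.
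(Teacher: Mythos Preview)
Your argument is essentially the paper's own. The paper also picks a hyperplane $H$ containing the image of $g$, notes that $(f,H)$ is a nonzero holomorphic function vanishing on $\bigcup_i f^{-1}(H_i)$, and feeds this into Proposition~\ref{3.5}. The only cosmetic difference is that the paper applies Proposition~\ref{3.5} to $h=(f,H)^n\in B(n,0;f)$ with $\lambda=1$, whereas you apply it to $h=(f,\tilde H)\in B(1,0;f)$ with $\lambda=1/n$; since $\frac{1}{\lambda}(p+\rho l_0)$ equals $n$ in both cases, the two choices yield the identical bound $q\le 2n+1+\rho\,n(n+1)/2$ from Proposition~\ref{3.5}.

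Your concern about the borderline $\rho=0$, $q=2n+1$ is a fair observation about the lemma as stated (the hypothesis is non-strict), but it is not a defect of your proof relative to the paper's: the paper's proof has exactly the same feature, and in the paper's applications the lemma is only ever used under the strict inequality $q>2n+1+\ldots$ coming from Theorem~\ref{1.1}, so the borderline does not arise. Your proposed workaround of rerunning the growth dichotomy by hand is not really needed (and would not obviously succeed when $\rho=0$, since then $t=0$ in the Yau--Karp step); simply note, as the paper implicitly does, that the lemma is used only with strict inequality.
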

\begin{proof}
Suppose contrarily that there exists a hyperplane $H$ satisfying $g(\C^m)\subset H$. We assume that $f$ and $g$ have reduced representations $f=(f_{0}:\cdots :f_{n})$ and $g=(g_0:\cdots :g_n)$ respectively. Since $f$ is linearly non-degenerate, there exists an admissible set $(\alpha_0,\ldots,\alpha_n)\in(\N^m)^{n+1}$ with $|\alpha_i|\le i\ (0\le i\le n)$ such that
$$ W(f):=\det\left (\mathcal D^{\alpha_i}(f_j); 0\le i,j\le n+1\right)\not\equiv 0. $$
Assume that $H=\{(\omega_0:\cdots :\omega_n)\ |\ \sum_{i=0}^na_i\omega_i=0\}$. 
Since $f$ is linearly non-degenerate, $(f,H)\not\equiv 0$. On the other hand $(f,H)(z)=(g,H)(z)=0$ for all $z\in\bigcup_{i=1}^{q}f^{-1}(H_i)$, hence
\begin{align*}
\nu_{(f,H)^n}&\ge n\sum_{i=1}^{q}\nu_{(f,H_i)}^{[1]}\ge\sum_{i=1}^{q}\nu_{(f,H_i)}^{[n]}.
\end{align*}
We see that $|(f,H)^n|\le C\cdot \|f\|^n$ for a positive constant $C$. Then $(f,H)^n\in B(n,0;f)$. Hence, applying Proposition \ref{3.5} for the function $(f,H)^n$ and $q$ hyperplanes $H_1,\ldots,H_q$, we deduce that
$$ q\le n+1+\rho n(n+1)+n=2n+1+\rho n(n+1).$$
This is a contradiction.
\end{proof}

\vskip0.2cm
Now for three mappings $f^1, f^2, f^3 \in \mathcal {F}(f,\{H_i\}_{i=1}^{q},1)$, we define:

$\bullet$ $F_k^{ij}=\dfrac {(f^k,H_i)}{(f^k,H_j)}\ (0\le k\le 2,\ 1\le i,j\le 2n+2),$

$\bullet$ $V_i=( (f^1,H_i),(f^2,H_i),(f^3,H_i))\in\mathcal M_m^3,$

$\bullet$ $\nu_i$: the divisor whose support is the closure of the set $\{z;\nu_{(f^u,H_i)}(z)\ge \nu_{(f^v,H_i)}(z)=\nu_{(f^t,H_i)}(z)\text{ for a permutation } (u,v,t) \text{ of } (1,2,3)\}$.

We write $V_i\cong V_j$ if $V_i\wedge V_j\equiv 0$, otherwise we write $V_i\not\cong V_j.$ For $V_i\not\cong V_j$, we write $V_i\sim V_j$ if there exist $1\le u<v\le 3$ such that $F_u^{ij}=F_v^{ij}$, otherwise we write $V_i\not\sim V_j$.

\begin{lemma}\label{4.3}
If $n\ge 2$ then $f^1\wedge f^2\wedge f^3=0$ for every $f^1,f^2,f^3\in\mathcal{F}(f,\{H_i\}_{i=1}^{q},1)$ provided
$$ q>n+1+3\rho\frac{n(n+1)}{2}+\frac{3nq}{2q+2n-2}.$$
\end{lemma}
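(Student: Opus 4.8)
The plan is to argue by contradiction: assume $f^1\wedge f^2\wedge f^3\not\equiv 0$ and produce, from the Pl\"ucker-type $3\times 3$ determinants $P_{stl}$ built out of the coordinate functions $(f^u,H_i)$, a nonzero holomorphic function $h$ lying in some $B(p,l_0;f^1,f^2,f^3)$ whose divisor dominates $\lambda\sum_{u=1}^3\sum_{i=1}^q\nu^{[n]}_{(f^u,H_i)}$, and then invoke Proposition \ref{3.5} with $k=3$ to get a numerical bound on $q$ contradicting the hypothesis $q>n+1+3\rho\frac{n(n+1)}{2}+\frac{3nq}{2q+2n-2}$. Concretely, since $f^1\wedge f^2\wedge f^3\not\equiv 0$, one can choose indices $s,t,l$ so that the determinant $P=P_{stl}$ of Lemma \ref{2.4} is not identically zero (this choice being possible after relabelling the hyperplanes, since a vanishing of all such determinants would force the three images into a common line, contradicting non-degeneracy). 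Lemma \ref{2.4} then gives the divisor estimate $\nu_P\ge\sum_{i=s,t,l}(\min_u\nu_{(f^u,H_i)}-\nu^{[1]}_{(f,H_i)})+2\sum_{i=1}^q\nu^{[1]}_{(f,H_i)}$, which after the standard manipulation $n\nu^{[1]}\ge\nu^{[n]}$ can be turned into a lower bound by $\sum_{u,i}\nu^{[n]}_{(f^u,H_i)}$ times an explicit rational constant $\lambda$.

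The second ingredient is to show $P\in B(p,l_0;f^1,f^2,f^3)$ for suitable $p$ and $l_0$. Here $P$ is a polynomial of degree $1$ in each of the three triples of holomorphic functions $(f^u,H_\cdot)$, hence $|P|\le C\,\|f^1\|\,\|f^2\|\,\|f^3\|$ pointwise; since a constant lies in $S(0;f^1,f^2,f^3)$, this already places $P$ in $B(3,0;f^1,f^2,f^3)$ crudely. To get the sharp constants matching the bound $\frac{3nq}{2q+2n-2}$, one instead writes $P$ using the general Wronskians $W(f^u)$ and the ratios $\dfrac{W(f^u)}{\prod_{i}(f^u,H_i)}$ — which by Proposition \ref{3.4} belong to $S(n(n+1)/2;f^u)$ — so that after multiplying and dividing by the appropriate products of $(f^u,H_i)$ one exhibits $P$ (or a suitable auxiliary function derived from it, e.g.\ $P$ divided by common factors coming from the $\nu_i$) as $\|f^1\|^{p}\|f^2\|^{p}\|f^3\|^{p}$ times an element of $S(l_0;f^1,f^2,f^3)$ with $p$ and $l_0$ of size $O(q)$ and $O(n^2)$ respectively. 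Tracking these exponents carefully, and feeding $h=P$ together with the value of $\lambda$ into Proposition \ref{3.5}, yields $q\le n+1+3\rho\frac{n(n+1)}{2}+\frac{1}{\lambda}(p+\rho l_0)$; choosing the bookkeeping so that $\frac{1}{\lambda}(p+\rho l_0)\le\frac{3nq}{2q+2n-2}$ (this is where Lemma \ref{4.1}(i) is designed to be applied) contradicts the hypothesis and forces $f^1\wedge f^2\wedge f^3\equiv 0$.

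The main obstacle I anticipate is the precise bookkeeping in the second step: one must choose the right auxiliary function (not $P$ itself but $P$ with the "extra" zeros along the sets defining the divisors $\nu_i$ factored out, so that the residual divisor is exactly controlled from below by $\lambda\sum\nu^{[n]}_{(f^u,H_i)}$) and simultaneously keep the degree $p$ in $\|f^u\|$ and the level $l_0$ small enough that $\frac1\lambda(p+\rho l_0)$ falls below $\frac{3nq}{2q+2n-2}$. Getting $\lambda$ as large as possible — essentially $\lambda=\frac{2q+2n-2}{3nq}\cdot(\text{something})$ after combining the $2\sum\nu^{[1]}_{(f,H_i)}$ term with the $n\nu^{[1]}\ge\nu^{[n]}$ trick over all three maps — is the delicate point, and the $\dim f^{-1}(H_i)\cap f^{-1}(H_j)\le m-2$ hypothesis will be used to ensure the divisors $\nu_i$ and the common zeros behave as expected (no spurious codimension-one contributions). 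Once $\lambda$, $p$, $l_0$ are pinned down, the conclusion is immediate from Proposition \ref{3.5} and Lemma \ref{4.1}(i).
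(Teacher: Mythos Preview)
Your overall strategy---contradiction via Lemma \ref{2.4} and Proposition \ref{3.5}---is right, but the construction has a genuine gap: a \emph{single} determinant $P=P_{stl}$ cannot deliver the constant $\frac{3nq}{2q+2n-2}$. From Lemma \ref{2.4} applied to one $P$ you get
\[
\nu_P\ge 2\sum_{i=1}^q\nu^{[1]}_{(f,H_i)}+\bigl(\text{terms involving only the three indices }s,t,l\bigr),
\]
and converting via $\nu^{[1]}_{(f,H_i)}\ge\frac{1}{3n}\sum_{u=1}^3\nu^{[n]}_{(f^u,H_i)}$ yields only $\lambda=\frac{2}{3n}$ (the contribution from $s,t,l$ is negligible against $q$ indices). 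Since $|P|\le C\|f^1\|\,\|f^2\|\,\|f^3\|$ means $P\in B(1,0;f^1,f^2,f^3)$ (not $B(3,0)$---the exponent $p$ in Definition \ref{3.3} is \emph{per map}), Proposition \ref{3.5} gives $q\le n+1+3\rho\frac{n(n+1)}{2}+\frac{3n}{2}$. But $\frac{3nq}{2q+2n-2}<\frac{3n}{2}$, so this does not contradict the hypothesis.

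The missing idea is to take a \emph{product} of $q$ such determinants. Set $\sigma(i)\equiv i+n\pmod q$; since each group in the partition of the $V_i$'s has at most $n$ elements, $V_i\not\cong V_{\sigma(i)}$, and one can pick $\gamma(i)$ with $V_i\wedge V_{\sigma(i)}\wedge V_{\gamma(i)}\not\equiv 0$. Applying Lemma \ref{2.4} to each $P_i$ and summing over $i=1,\ldots,q$, the cyclic shift $\sigma$ guarantees every index appears exactly twice among the pairs $\{i,\sigma(i)\}$, so the $\min$-terms assemble to $2\sum_{u,i}\nu^{[n]}_{(f^u,H_i)}-(4n+2)\sum_i\nu^{[1]}_{(f,H_i)}$, and together with the $2q\sum_i\nu^{[1]}$ term one obtains
\[
\nu_{\prod_i P_i}\ge \Bigl(2+\tfrac{2q-4n-2}{3n}\Bigr)\sum_{u=1}^3\sum_{i=1}^q\nu^{[n]}_{(f^u,H_i)}=\frac{2q+2n-2}{3n}\sum_{u,i}\nu^{[n]}_{(f^u,H_i)}.
\]
Now $\prod_iP_i\in B(q,0;f^1,f^2,f^3)$, so Proposition \ref{3.5} with $p=q$, $l_0=0$, $\lambda=\frac{2q+2n-2}{3n}$ yields exactly $q\le n+1+3\rho\frac{n(n+1)}{2}+\frac{3nq}{2q+2n-2}$, the desired contradiction. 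Your detour through Wronskians and the divisors $\nu_i$ is unnecessary here (the Wronskians already live inside the proof of Proposition \ref{3.5}; bringing them out would only increase $l_0$ and weaken the bound).
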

This lemma is firstly proved in \cite[Theorem 1]{NT}. For the sake of completeness, we will give another short simple proof.
\begin{proof}
Suppose that $f^1\wedge f^2\wedge f^3\not\equiv 0$. Without loss of generality, we may assume that
$$\underbrace{V_1\cong \cdots\cong V_{l_1}}_{\text { group } 1}\not\cong
\underbrace{V_{l_1+1}\cong\cdots\cong V_{l_2}}_{\text { group } 2}\not\equiv \underbrace{V_{l_2+1}\cong \cdots\cong V_{l_3}}_{\text { group } 3}\not\cong\cdots \not\cong\underbrace{V_{l_{s-1}}\cong\cdots\cong V_{l_{s}}}_{\text { group } s},$$
where $l_s=q.$ 
 For each $1\le i\le q$, we set
$$ \sigma (i)=\begin{cases}
i+n&\text{ if }i+n\le q,\\
i+n-q&\text{ if }i+n>q.
\end{cases} $$
Then, we see that $V_i$ and $V_{\sigma (i)}$ belong to two distinct groups, i.e., $V_i\wedge V_{\sigma (i)}\not\equiv 0$. Therefore, we may choose another index, denoted by $\gamma (i)$, such that 
$$V_i\wedge V_{\sigma (i)}\wedge V_{\gamma (i)}\not\equiv 0.$$
We set
$$ P_i:= \mathrm{det}\left (\begin{array}{ccc}
(f^1,H_i)&(f^1,H_{\sigma (i)})&(f^1,H_{\gamma (i)})\\ 
(f^2,H_i)&(f^2,H_{\sigma (i)})&(f^2,H_{\gamma (i)})\\
(f^3,H_i)&(f^3,H_{\sigma (i)})&(f^3,H_{\gamma (i)})
\end{array}\right )\not\equiv 0.$$
Then, by Lemma \ref{2.4} we have
\begin{align*}
\nu_{P_i}&\ge \bigl (\min_{1\le u\le 3}\{\nu_{(f^u,H_i)}\}-\nu^{[1]}_{(f,H_i)}\bigl )+\bigl (\min_{1\le u\le 3}\{\nu_{(f^u,H_{\sigma (i)})}\}-\nu^{[1]}_{(f,H_{\sigma (i)})}\bigl )+2\sum_{j=1}^q\nu^{[1]}_{(f,H_j)}\\
&\ge \sum_{u=1}^3(\nu^{[n]}_{(f^u,H_i)}+\nu^{[n]}_{(f^u,H_{\sigma (i)})})-(2n+1)(\nu^{[1]}_{(f,H_i)}+\nu^{[1]}_{(f,H_{\sigma (i)})})+2\sum_{j=1}^q\nu^{[1]}_{(f,H_j)}.
\end{align*}
Summing-up both sides of the above inequality over all $1\le i\le q$, we have
$$\nu_{\prod_{i=1}^qP_i}\ge 2\sum_{u=1}^3\nu^{[n]}_{(f^u,H_i)}+(2q-4n-2)\sum_{j=1}^q\nu^{[1]}_{(f,H_j)}\ge\sum_{u=1}^3(2+\frac{2q-4n-2}{3n})\nu^{[n]}_{(f^u,H_i)}.$$
It is easy to see that $P_i\in B(1,0;f^1,f^2,f^3)\ (1\le i\le q)$, and hence 
$$\prod_{i=1}^qP_i\in B(q,0;f^1,f^2,f^3).$$ 
Then, by Proposition \ref{3.5} we have
$$ q\le n+1+3\rho\frac{n(n+1)}{2}+\frac{3nq}{2q+2n-2}.$$
This is a contradiction. 

Then $f^1\wedge f^2\wedge f^3\equiv 0$. The lemma is proved.
\end{proof}
\begin{lemma}\label{4.4} 
With the assumption of Theorem \ref{1.1}, let $h$ and $g$ be two elements of $\mathcal F(f,\{H_i\}_{i=1}^{q},1)$. If there exist a constant $\lambda$ and two indices $i,j$ such that $\frac{(h,H_i)}{(h,H_j)}=\lambda\frac{(g,H_i)}{(g,H_j)}$ then $\lambda =1$. 
\end{lemma}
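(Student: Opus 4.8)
The plan is to argue by contradiction: suppose $\lambda \neq 1$, and extract from the relation $\frac{(h,H_i)}{(h,H_j)} = \lambda \frac{(g,H_i)}{(g,H_j)}$ a holomorphic function that vanishes on a large part of the common zero sets $f^{-1}(H_k)$, then feed it into Proposition \ref{3.5} to get a numerical bound contradicting the hypothesis on $q$ in Theorem \ref{1.1}. First I would note that since $h,g \in \mathcal F(f,\{H_k\}_{k=1}^q,1)$, Lemma \ref{4.2} (applicable because the hypothesis $q > 2n+1+\frac{2n}{3n+1}+\rho\frac{(n^2+4q-3n)(6n+1)}{6n^2+2}$ certainly forces $q \ge 2n+1+\rho n(n+1)$) guarantees that both $h$ and $g$ are linearly non-degenerate; this is needed so that $(h,H_i), (h,H_j), (g,H_i), (g,H_j)$ are all not identically zero and so that Proposition \ref{3.5} applies to $h$ and $g$.

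The key step is to rewrite the hypothesis as
$$
(h,H_i)(g,H_j) - \lambda (h,H_j)(g,H_i) \equiv 0,
$$
equivalently $(h,H_i)(g,H_j) = \lambda (h,H_j)(g,H_i)$, and then to consider a suitable auxiliary function built from a third hyperplane. Since both mappings share all $q$ hyperplanes regardless of multiplicity, for each index $k \neq i,j$ the functions $(h,H_k)$ and $(g,H_k)$ vanish exactly on $f^{-1}(H_k)$ (with multiplicity $\ge 1$). I would form a combination such as
$$
\Psi := \bigl((h,H_i)(g,H_j) - (h,H_j)(g,H_i)\bigr) \quad\text{or, when this vanishes, a ratio involving } H_k,
$$
and use that $\lambda \neq 1$ to deduce that $(h,H_i)(g,H_j)$ and $(h,H_j)(g,H_i)$ are linearly independent over $\C$ unless $h \equiv g$; after subtracting, the difference is divisible by $\sum_{k \neq i,j}\nu^{[1]}_{(f,H_k)}$, since on each $f^{-1}(H_k)$ ($k\neq i,j$) we have $(h,H_i)=(g,H_i)$ and $(h,H_j)=(g,H_j)$ (the sharing condition (b)), making both products equal there. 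Counting contributions from $H_i$ and $H_j$ as well and passing to truncation level $n$ gives a divisor inequality of the shape $\nu_\Psi \ge \sum_{u}\sum_{k}\nu^{[n]}_{(\cdot,H_k)}$ up to a bounded factor, while a trivial estimate $|\Psi| \le C\|h\|\,\|g\|$ shows $\Psi \in B(1,0;h,g)$. Proposition \ref{3.5} with $k=2$ then yields an upper bound for $q$ of the form $n+1+\rho n(n+1)+(\text{small})$, which is smaller than the assumed lower bound, a contradiction.

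The main obstacle I expect is the bookkeeping in the divisor estimate: one must be careful that $(h,H_i)(g,H_j)-\lambda(h,H_j)(g,H_i) \equiv 0$ but $(h,H_i)(g,H_j)-(h,H_j)(g,H_i) \not\equiv 0$ when $\lambda\neq 1$ and $h\not\equiv g$, and that the latter is precisely $(1-\lambda)(h,H_j)(g,H_i) \cdot (\text{something})$ — in fact it equals $(1-\lambda)(h,H_j)(g,H_i)$ after using the first relation, so its divisor is exactly $\nu_{(h,H_j)} + \nu_{(g,H_i)}$, which is too small. So the real work is to instead bring in a third index $k$ and consider $(h,H_i)(g,H_k) - (h,H_k)(g,H_i)$ (and its analogue with $j$), which vanishes on all $f^{-1}(H_l)$, $l \neq i,k$ and $l\neq i,j$ respectively, and combine the two; alternatively, one observes that $\frac{(h,H_i)}{(h,H_j)} = \lambda\frac{(g,H_i)}{(g,H_j)}$ with $\lambda\neq1$ forces, together with the sharing, that $f^{-1}(H_i)\cup f^{-1}(H_j)$ is empty off the indeterminacy locus, i.e. $\nu_{(f,H_i)}^{[1]} + \nu_{(f,H_j)}^{[1]}$ contributes nothing — then one derives a contradiction directly from the second main theorem applied to $f$ and the remaining $q-2$ hyperplanes together with the growth condition $(C_\rho)$ exactly as in the proof of Proposition \ref{3.5}. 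I would pursue whichever of these two routes keeps the constants cleanest; the hypothesis of Theorem \ref{1.1} is evidently calibrated (via Lemma \ref{4.1}(iii) or a similar inequality) so that the resulting bound is strictly violated.
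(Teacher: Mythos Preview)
Your second route---show that some preimages must be empty and then invoke Proposition~\ref{3.5}---is exactly the paper's argument, but you have identified the wrong empty set. You claim that $f^{-1}(H_i)\cup f^{-1}(H_j)$ is forced to be empty; however, nothing prevents $(h,H_i)$ and $(g,H_i)$ from vanishing on $f^{-1}(H_i)$ (and likewise for $H_j$): at such points $H=G=0$ (or $=\infty$), which is perfectly compatible with $H=\lambda G$ for any $\lambda$. And with only $\nu^{[1]}_{(f,H_i)}+\nu^{[1]}_{(f,H_j)}=0$ in hand, applying the second main theorem to the \emph{remaining} $q-2$ hyperplanes gives no contradiction, since their counting functions are still uncontrolled.

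What the relation \emph{does} force to be empty is $\bigcup_{t\neq i,j}f^{-1}(H_t)$. On each $f^{-1}(H_t)$ with $t\neq i,j$ the sharing condition (b) gives $h(z)=g(z)$, hence $H(z)=G(z)$; combined with $H\equiv\lambda G$ and $\lambda\neq 1$ this forces $G(z)=0$, i.e.\ $z\in f^{-1}(H_i)$. The codimension-two hypothesis on pairwise intersections then kills $f^{-1}(H_t)$ as a divisor for every $t\neq i,j$. Now apply Proposition~\ref{3.5} to the constant function $1\in B(0,0;f)$ and the $q-2$ hyperplanes $\{H_t:t\neq i,j\}$ (for which $\nu^{[n]}_{(f,H_t)}=0$, so the divisor hypothesis holds for arbitrarily large $\lambda$) to obtain $q-2\le n+1+\rho\,n(n+1)$, contradicting the assumption on~$q$.

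Your first route---building auxiliary functions $\Psi$ from pairs or triples of hyperplanes and estimating their divisors---is an unnecessary detour. You already correctly observed that $(h,H_i)(g,H_j)-(h,H_j)(g,H_i)=(1-\lambda)(h,H_j)(g,H_i)$ carries no information about the other $H_t$; bringing in a third index only complicates the bookkeeping without recovering the clean emptiness statement above.
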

{\textit{Proof.}\ Set $ H=  \dfrac{(h,H_{i})}{(h,H_j)}\text{ and }G=\dfrac{(g,H_{i})}{(g,H_j)}$.
Then $H=\lambda G$. Supposing that $\lambda\ne 1$, since $H=G$ on the set $\bigcup_{t\ne i,j}f^{-1}(H_t)\setminus (f^{-1}(H_i)\cup f^{-1}(H_j))$, we have 
$$\bigcup_{t\ne i,j}f^{-1}(H_t)\subset f^{-1}(H_i)\cup f^{-1}(H_j).$$
 By the assumption $\dim\left(f^{-1}(H_s)\cap f^{-1}(H_t)\right)\le m-2\ (s\ne t)$, this implies that
$$ \bigcup_{t\ne i,j}f^{-1}(H_t)=\emptyset.$$
Applying Proposition \ref{3.5} for the function $1\in B(0,0;f)$ and $(q-2)$ hyperplanes $\{H_t;t\ne i,j\}$, we have
$$ q-2\le n+1+\rho n(n+1).$$
This is a contradiction. Therefore $\lambda =1$. The lemma is proved. \hfill$\square$

\begin{lemma}\label{4.5} 
With the assumption of Theorem \ref{1.1}, let $f^1,f^2,f^3$ be three meromorphic mappings in $\mathcal F(f,\{H_i\}_{i=1}^{q},1)$. Suppose that $f^1\wedge f^2\wedge f^3\equiv 0$ and $V_i\sim V_j$ for some distinct indices $i$ and $j$. Then $f^1,f^2,f^2$ are not distinct. 
\end{lemma}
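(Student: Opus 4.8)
The plan is to argue by contradiction: assume $f^1, f^2, f^3$ are distinct, yet $f^1\wedge f^2\wedge f^3\equiv 0$ and $V_i\sim V_j$ for some $i\neq j$. After relabelling we may take $i=1$, $j=2$, so there exist $1\le u<v\le 3$ with $F_u^{12}=F_v^{12}$, i.e. $\frac{(f^u,H_1)}{(f^u,H_2)}=\frac{(f^v,H_1)}{(f^v,H_2)}$. The first step is to exploit $f^1\wedge f^2\wedge f^3\equiv 0$: since the three (nonzero, by Lemma \ref{4.2}) mappings are ``linearly dependent'' as vectors of meromorphic functions, there are nonzero meromorphic functions — or, after normalizing, a relation — expressing one representation as a combination of the other two; concretely one obtains constants/meromorphic coefficients so that $c_1 f^1 + c_2 f^2 + c_3 f^3 \equiv 0$ componentwise (this is exactly the wedge-vanishing). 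The key point is then to pair this dependence relation with the coincidence $F_u^{12}=F_v^{12}$ to force a coincidence of two of the maps on many hyperplanes.

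The second step is to run the Cartan auxiliary-function machinery of Lemma \ref{2.3}. For a triple of indices one forms $\Phi^\alpha\bigl(F^{1j}_1, F^{2j}_1, F^{3j}_1\bigr)$ type functions (comparing the three maps through ratios attached to the shared hyperplanes), observes they are of bounded integration for $\{f^1,f^2,f^3\}$ with controlled bi-degree, and — were any of them nonzero — applies Proposition \ref{3.5} to get a bound on $q$ that contradicts the hypothesis of Theorem \ref{1.1} (which, via Lemma \ref{4.1}, furnishes exactly the inequalities needed). Hence all the relevant Cartan functions vanish, and Lemma \ref{2.3} then yields that for each relevant hyperplane either two of the three ratios $F^u, F^v, F^t$ coincide or all three ratios are constant. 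Propagating this across the $q>2n+2$ hyperplanes (using that $q$ is large and that a function cannot be constant on ``too many'' hyperplanes without contradicting general position, again via a Proposition \ref{3.5} count for the constant function $1\in B(0,0;f)$ as in Lemma \ref{4.4}), one isolates a pair, say $f^1$ and $f^2$, with $\frac{(f^1,H_k)}{(f^1,H_l)}=\frac{(f^2,H_k)}{(f^2,H_l)}$ for all $k,l$ in a large index set, together with agreement on $\bigcup f^{-1}(H_i)$.

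The third step converts this into $f^1=f^2$. Equality of all ratios $\frac{(f^1,H_k)}{(f^1,H_l)}=\frac{(f^2,H_k)}{(f^2,H_l)}$ over a set of indices whose hyperplanes span (at least $n+1$ of them in general position do) means the two reduced representations differ by a single nonzero meromorphic scalar after expressing coordinates in the basis dual to those $H_k$; hence $f^1=f^2$ as maps, contradicting distinctness. Here Lemma \ref{4.4} is the precise tool that removes the residual scalar ambiguity (any constant factor arising between $f^1$ and $f^2$ through such ratio-equalities must be $1$). I expect the main obstacle to be the bookkeeping in the second step: organizing the finitely many index-triples, checking that every Cartan auxiliary function that could be nonzero is genuinely in some $B(p,l_0;f^1,f^2,f^3)$ with $p,l_0$ small enough that Proposition \ref{3.5} contradicts $q>2n+1+\frac{2n}{3n+1}+\rho\frac{(n^2+4q-3n)(6n+1)}{6n^2+2}$ (this is where inequalities (iii) and (iv) of Lemma \ref{4.1} are tuned), and handling the ``all ratios constant'' alternative uniformly. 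Once all those auxiliary functions are shown to vanish, the combinatorial extraction of a coincident pair and the final identification $f^1=f^2$ (or $f^2=f^3$, or $f^3=f^1$) are routine.
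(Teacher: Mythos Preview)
Your proposal misses the point of this lemma and is vastly more elaborate than what is required. The paper's proof is four lines of elementary linear algebra and uses none of the heavy machinery (Cartan auxiliary functions, Proposition~\ref{3.5}, the size of $q$, Lemma~\ref{4.1}) that you invoke.

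Here is the actual mechanism. By definition, $V_i\sim V_j$ means $V_i\not\cong V_j$ \emph{and} two of the ratios $F_1^{ij},F_2^{ij},F_3^{ij}$ coincide; since $V_i\not\cong V_j$, they cannot all three coincide, so after relabelling $F_1^{ij}=F_2^{ij}\ne F_3^{ij}$. Now $f^1\wedge f^2\wedge f^3\equiv 0$ with $f^1\ne f^2$ gives a meromorphic function $\alpha$ with
\[
F_3^{tj}=\alpha F_1^{tj}+(1-\alpha)F_2^{tj}\qquad(1\le t\le q).
\]
Taking $t=i$ and using $F_1^{ij}=F_2^{ij}$ yields $F_3^{ij}=F_1^{ij}$, contradicting $F_3^{ij}\ne F_1^{ij}$. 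That is the whole proof.

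The gap in your plan is that you never identify where the contradiction actually lives. You aim to prove that two of the maps coincide globally (your third step), which is much stronger than needed; the contradiction is simply that $V_i\sim V_j$ forces the third ratio to differ, while the wedge relation forces it to agree. Your proposed detours through $\Phi^\alpha$-vanishing, Proposition~\ref{3.5}, and Lemma~\ref{4.4} are the tools used in Lemmas~\ref{4.6} and~\ref{4.8} and in the main proof of Theorem~\ref{1.1}, not here; invoking them for Lemma~\ref{4.5} would be circular in spirit and in any case unnecessary.
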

\textit{Proof.}\quad Suppose $f^1,f^2,f^2$ are distinct. Since $V_i\sim V_j$, we may suppose that $F_1^{ij}=F_2^{ij}\ne F_3^{ij}$. Since $f^1\wedge f^2\wedge f^3\equiv 0$ and $f^1\ne f^2$, there exists a meromorphic function $\alpha$ such that 
$$ F_3^{tj}=\alpha F_1^{tj}+(1-\alpha)F_2^{tj}\ (1\le t\le 2n+2). $$
This implies that $F_3^{ij}=F_1^{ij}=F^{ij}_2$. This is a contradiction. Hence $f^1,f^2,f^3$ are not distinct. The lemma is proved \hfill$\square$

\begin{lemma}\label{4.6} 
With the assumption of Theorem \ref{1.1}, let $f^1,f^2,f^3$ be three meromorphic mappings in $\mathcal F(f,\{H_i\}_{i=1}^{q},1)$. Suppose that $f^1,f^2,f^3$ are distinct and there are two indices $i, j \in \{1,\ldots ,q\} \ (i\ne j)$ such that $V_i\not\cong V_j$ and
$$\Phi_{ij}^{\alpha}:= \Phi^{\alpha}(F_1^{ij},F_2^{ij},F_3^{ij})\equiv 0$$
for every $\alpha =(\alpha_1,\ldots ,\alpha_m)\in\N^m$ with $|\alpha|=1$. Then for every $t\in\{1,\ldots ,q\}\setminus \{i\}$, the following assertions hold:
\begin{itemize}
\item[(i)] $\Phi^{\alpha}_{it}\equiv 0$ for all $|\alpha| \le 1.$
\item[(ii)] If $V_i\not\cong V_t$ then  $F_1^{ti},F_2^{ti},F_3^{ti}$ are distinct and there exists a meromorphic function $h_{it}\in S(1;f^1,f^2,f^3)$ such that
\begin{align*}
\nu_{h_{it}}\ge -\nu^{[1]}_{(f,H_i)}-\nu^{[1]}_{(f,H_t)}+\sum_{j\ne i,t}\nu^{[1]}_{(f,H_j)}.
\end{align*} 
\end{itemize}
\end{lemma}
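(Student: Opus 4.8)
\textbf{Proof plan for Lemma \ref{4.6}.}
The plan is to treat this as a propagation argument for the vanishing of Cartan's auxiliary functions, in the spirit of Fujimoto's classical technique (Lemma \ref{2.3}), adapted to the present setting. For part (i), I would first recall the standard identity relating $\Phi^\alpha(F_1^{ij},F_2^{ij},F_3^{ij})$ for one pair of indices to the corresponding function for another pair. The key algebraic fact is that if $\Phi^\alpha(F,G,H)\equiv 0$ for all $|\alpha|\le 1$ on a connected domain, then either two of $F,G,H$ coincide or the three ratios $F/G$, $G/H$, $H/F$ are all constant; moreover $\Phi^\alpha$ transforms nicely under replacing the common denominator. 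Concretely, since $F_u^{it}=F_u^{ij}\cdot F_u^{jt}$ and the maps share $H_1,\dots,H_q$ regardless of multiplicity, the zero/pole structure of the $F_u^{it}$ on the analytic sets $f^{-1}(H_\bullet)$ is controlled. I would use the hypothesis $\Phi_{ij}^\alpha\equiv 0$ ($|\alpha|=1$), together with the vanishing of $\Phi^\alpha(1/F_1^{ij},1/F_2^{ij},1/F_3^{ij})$ which follows from the symmetric role of $i$ and $j$ (note $F_u^{ji}=1/F_u^{ij}$ and $V_i\not\cong V_j$ is symmetric), to invoke Lemma \ref{2.3}: since $f^1,f^2,f^3$ are distinct, the first alternative is excluded, so $F_1^{ij}/F_2^{ij}$, $F_2^{ij}/F_3^{ij}$, $F_3^{ij}/F_1^{ij}$ are all constant. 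Then a direct computation, using that constancy of these ratios forces the analogous auxiliary functions $\Phi^\alpha_{it}$ to be expressible through $\Phi^\alpha$ of the $F_u^{jt}$ multiplied by constants, propagates the vanishing to $\Phi^\alpha_{it}\equiv 0$ for all $|\alpha|\le 1$. I expect the careful bookkeeping of which pairs $(V_i,V_t)$ are $\cong$ versus $\not\cong$, and handling the degenerate sub-cases inside Lemma \ref{2.3}, to be where the argument needs attention.

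For part (ii), assume $V_i\not\cong V_t$. From part (i) we know $\Phi^\alpha_{it}\equiv 0$ for all $|\alpha|\le 1$; combined with the symmetric vanishing $\Phi^\alpha(1/F^{it}_1,1/F^{it}_2,1/F^{it}_3)\equiv 0$, Lemma \ref{2.3} again applies, and since $f^1,f^2,f^3$ are distinct the first alternative fails, so the three ratios $F^{it}_1/F^{it}_2$ etc.\ are constant. If two of $F^{ti}_1,F^{ti}_2,F^{ti}_3$ were equal, say $F^{ti}_1=F^{ti}_2$, then $V_i\sim V_t$, and Lemma \ref{4.5} would force $f^1,f^2,f^3$ not distinct, a contradiction; hence $F_1^{ti},F_2^{ti},F_3^{ti}$ are distinct. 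For the divisor estimate on a suitable $h_{it}$, I would take $h_{it}$ to be (up to a nonzero constant factor, absorbed harmlessly) the Cartan auxiliary determinant $\Phi^\alpha(F^{ti}_1,F^{ti}_2,F^{ti}_3)$ for an admissible first-order $\alpha$ chosen so that it is not identically zero — but since we just showed these functions vanish, instead I would use a second-order version, or more efficiently a $3\times 3$ determinant of the type appearing in Lemma \ref{2.4}. The natural candidate is
$$h_{it}=\frac{1}{(f,H_i)\cdot(f,H_t)}\cdot\det\!\left(\begin{array}{ccc}(f^1,H_i)&(f^1,H_t)&(f^1,H_j)\\(f^2,H_i)&(f^2,H_t)&(f^2,H_j)\\(f^3,H_i)&(f^3,H_t)&(f^3,H_j)\end{array}\right)$$
for an appropriate auxiliary index $j$, rescaled by the denominators $(f^u,H_i)(f^u,H_t)$ so as to make it a function of the ratios $F^{\cdot i}_u,F^{\cdot t}_u$ only; membership in $S(1;f^1,f^2,f^3)$ then follows from Proposition \ref{3.4} (the Wronskian-type estimate at level one), and the divisor bound $\nu_{h_{it}}\ge -\nu^{[1]}_{(f,H_i)}-\nu^{[1]}_{(f,H_t)}+\sum_{j\ne i,t}\nu^{[1]}_{(f,H_j)}$ comes from Lemma \ref{2.4} combined with the fact that on each $f^{-1}(H_j)$ ($j\ne i,t$) the three columns collapse (the maps agree there), forcing the determinant to vanish, while the $-\nu^{[1]}$ terms account for poles introduced by dividing out $(f,H_i)(f,H_t)$.

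The main obstacle I anticipate is part (i): making the propagation from the single pair $\{i,j\}$ to all pairs $\{i,t\}$ genuinely rigorous requires a clean algebraic lemma on how $\Phi^\alpha$ behaves under the substitution $F_u\mapsto c_u F_u$ with $c_u$ constant and under products $F_u^{it}=F_u^{ij}F_u^{jt}$, and then checking that the constancy of the ratios for the $jt$-pair (which we do not a priori have — only for the $ij$-pair) is not actually needed, i.e.\ that $\Phi^\alpha_{it}\equiv 0$ follows purely from $\Phi^\alpha_{ij}\equiv 0$ plus linearity of the second-order differential operators $\mathcal D^\alpha$. I would handle this by writing $\mathcal D^\alpha(1/F_u^{it})$ via the Leibniz-type expansion and reducing it, modulo the relations coming from $\Phi^\alpha_{ij}\equiv 0$, to a multiple of $\mathcal D^\alpha(1/F_u^{jt})$; the distinctness of $f^1,f^2,f^3$ is what rules out the trivial escape route throughout.
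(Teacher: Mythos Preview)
Your proposal has two genuine gaps.

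\textbf{Part (i).} You claim that the vanishing $\Phi^\alpha(1/F_1^{ij},1/F_2^{ij},1/F_3^{ij})\equiv 0$ ``follows from the symmetric role of $i$ and $j$''. It does not: the hypothesis gives only $\Phi^\alpha_{ij}\equiv 0$, not $\Phi^\alpha_{ji}\equiv 0$, and interchanging $i,j$ replaces each $F_u^{ij}$ by its reciprocal, which is not a harmless substitution in the Cartan determinant. So Lemma~\ref{2.3} cannot be invoked here, and the conclusion you draw from it (constancy of the \emph{multiplicative} ratios $F_1^{ij}/F_2^{ij}$, etc.) is unjustified. The paper does not go through Lemma~\ref{2.3} at this point. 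Instead, from $\Phi^\alpha_{ij}\equiv 0$ for all $|\alpha|=1$ one computes directly that the \emph{affine} ratio $(F_3^{ji}-F_1^{ji})/(F_2^{ji}-F_1^{ji})$ is a constant $c$. The propagation to an arbitrary $t$ then uses the fact $f^1\wedge f^2\wedge f^3\equiv 0$ from Lemma~\ref{4.3}, which you never invoke: this forces $\det(V_i,V_j,V_t)\equiv 0$, and after dividing by $\prod_u(f^u,H_i)$ one gets $(F_3^{ti}-F_1^{ti})/(F_2^{ti}-F_1^{ti})=c$ as well, whence $\Phi^\alpha_{it}\equiv 0$. Your Leibniz-rule approach via $F_u^{it}=F_u^{ij}F_u^{jt}$ cannot succeed without this input, since you have no control whatsoever over the $F_u^{jt}$.

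\textbf{Part (ii).} Your candidate for $h_{it}$ is (a rescaling of) the determinant $\det(V_i,V_t,V_j)$. But under the hypotheses of Theorem~\ref{1.1}, Lemma~\ref{4.3} gives $f^1\wedge f^2\wedge f^3\equiv 0$, so every such $3\times 3$ determinant is identically zero; your $h_{it}$ is the zero function and carries no information. The paper's construction is entirely different: one forms the auxiliary map $G_{it}:\B^m(1)\to\P^1(\C)$ whose homogeneous components are (reductions of) $(f^1,H_t)(f^2,H_i)(f^3,H_i)$ and $(f^1,H_i)(f^2,H_t)(f^3,H_i)$, takes its nonvanishing generalized Wronskian $D^{\beta_{it}}$ (nonvanishing because $F_1^{ti}\ne F_2^{ti}$, which in turn comes from $c\notin\{0,1\}$), and sets $h_{it}$ to be the product of the difference of these two components with $D^{\beta_{it}}$, divided by all three ``coordinate'' products. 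Membership in $S(1;f^1,f^2,f^3)$ then follows from Proposition~\ref{3.4} applied to $G_{it}$ together with $T_{G_{it}}\le T_{f^1}+T_{f^2}+T_{f^3}$, and the divisor estimate from the coincidence of $f^1,f^2,f^3$ on $\bigcup_{v\ne i,t}f^{-1}(H_v)$.
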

 \textit{Proof.} By the supposition $V_i\not\cong V_j$, we may assume that $F_2^{ji}-F_1^{ji}\ne 0$. 

(a) For all $\alpha\in \N^m$ with $|\alpha|=1$, we have $\Phi_{ij}^{\alpha}=0$, and hence
\begin{align*}
\mathcal {D}^{\alpha}\biggl(\dfrac {F_3^{ji}-F_1^{ji}}{F_2^{ji}-F_1^{ji}}\biggl)=&\dfrac{1}{(F_2^{ji}-F_1^{ji})^2}\cdot\biggl ((F_2^{ji}-F_1^{ji})\cdot \mathcal {D}^{\alpha}(F_3^{ji}-F_1^{ji})\\
&\hspace{90pt}-(F_3^{ji}-F_1^{ji})\cdot \mathcal {D}^{\alpha}(F_2^{ji}-F_1^{ji})\biggl)\\
=&\dfrac{1}{{(F_2^{ji}-F_1^{ji})^2}}\cdot\left | 
\begin {array}{cccc}
1&1&1\\
F_1^{ji}&F_2^{ji} &F_3^{ji}\\
\mathcal {D}^{\alpha}(F_1^{ji}) &\mathcal {D}^{\alpha}(F_2^{ji}) &\mathcal {D}^{\alpha}(F_3^{ji})
\end {array}
\right| =0.
\end{align*}
Since the above equality holds for all $|\alpha|=1$, then there exists a constant $c\in \C$ such that
\begin{align*}
\dfrac {F_3^{ji}-F_1^{ji}}{F_2^{ji}-F_1^{ji}}=c.
\end{align*}
By Lemma \ref{4.3}, we have $f^1\wedge f^2\wedge f^3=0.$ Then for each index $t\in\{1,\ldots ,q\}\setminus\{i,j\}$, we have
\begin{align*}
0&= \det\left (
\begin{array}{ccc}
(f_1,H_i)&(f_1,H_j)&(f_1,H_{t})\\ 
(f_2,H_i)&(f_2,H_j)&(f_2,H_{t})\\
(f_3,H_i)&(f_3,H_j)&(f_3,H_{t})
\end{array}\right )
=\prod_{u=1}^3(f^u,H_i)\cdot\det\left (
\begin{array}{ccc}
1&F_1^{ji}&F_1^{ti}\\
1&F_2^{ji}&F_2^{ti}\\
1&F_3^{ji}&F_3^{ti}\\
\end{array}\right )\\
&=\prod_{u=1}^3(f^u,H_i)\cdot\det\left (
\begin{array}{ccc} 
F_2^{ji}-F_1^{ji}&F_2^{ti}-F_1^{ti}\\
F_3^{ji}-F_1^{ji}&F_3^{ti}-F_1^{ti}\\
\end{array}\right ).
\end{align*}
Thus
$$ (F_2^{ji}-F_1^{ji})\cdot (F_3^{ti}-F_1^{ti})= (F_3^{ji}-F_1^{ji})\cdot (F_2^{ti}-F_1^{ti}).$$
If $F_2^{ti}-F_1^{ti}=0$ then $F_3^{ti}-F_1^{ti}=0$, and hence $\Phi^{\alpha}_{it}=0$ for all $\alpha\in\N^m$ with $|\alpha|<1$. Otherwise, we have
$$ \dfrac{F_3^{ti}-F_1^{ti}}{F_2^{ti}-F_1^{ti}}=\dfrac{F_3^{ji}-F_1^{ji}}{F_2^{ji}-F_1^{ji}}=c.$$
This also implies that
\begin{align*}
\Phi^{\alpha}_{it}&=F_1^{it}\cdot F_2^{it} \cdot F_3^{it}\cdot
\left |
\begin{array}{ccc}
1&1&1\\
F_1^{ti}&F_2^{ti} &F_3^{ti}\\
\mathcal {D}^{\alpha}(F_1^{ti}) &\mathcal {D}^{\alpha}(F_2^{ti}) &\mathcal {D}^{\alpha}(F_3^{ti})\\
\end {array}
\right|\\
&=F_1^{it}\cdot F_2^{it} \cdot F_3^{it}\cdot
\left |
\begin{array}{cc}
F_2^{ti}-F_1^{ti} &F_3^{ti}-F_1^{ti}\\
\mathcal {D}^{\alpha}(F_2^{ti}-F_1^{ti})  &\mathcal {D}^{\alpha}(F_3^{ti}-F_1^{ti})\\
\end {array}
\right|\\
&=F_1^{it}\cdot F_2^{it} \cdot F_3^{it}\cdot
\left |
\begin{array}{cc}
F_2^{ti}-F_1^{ti} &c(F_2^{ti}-F_1^{ti})\\
\mathcal {D}^{\alpha}(F_2^{ti}-F_1^{ti})  &c\mathcal {D}^{\alpha}(F_2^{ti}-F_1^{ti})
\end {array}
\right|=0.
\end{align*}
Then one always has $\Phi^{\alpha}_{it}=0$ for all $t\in\{1,\ldots ,q\}\setminus \{i\}$. The first assertion is proved.

(b) We suppose that $V_i\not\cong V_t$. From the above part, we have
$$ c F_2^{si}+(1-c)F_1^{si}=F_3^{si}\ (s\ne i).$$
By the supposition that $f^1,f^2,f^3$ are distinct, we have $c\not\in\{0,1\}$. This implies that $F_1^{ti},F_2^{ti},F_3^{ti}$ are distinct. 

We consider the meromorphic mapping $G_{it}$ of $\B^m(1)$ into $ \P^1(\C)$ with a reduced representation 
$$G_{it}=((f^1,H_t)(f^2,H_i)(f_3,H_i)/h:(f^1,H_i)(f^2,H_t)(f_3,H_i)/h),$$
where $h$ is a holomorphic function on $\B^m(1)$. It is clear that
$$\|G_{it}\|\le\frac{\|f^1\|\cdot\|f^2\|\cdot\|f^3\|}{|h|}$$
and hence
$$T_{G_{it}}(r,r_0)\le T_{f_1}(r,r_0)+T_{f_2}(r,r_0)+T_{f_3}(r,r_0)-N_{h}(r,r_0).$$
This yields that $S(l;G_{it})\subset S(l;f^1,f^2,f^3)$ for all non-negative integers $l$.

For a point $z\not\in I(G_{it})$ which is a zero of one of 
$$\{(f^1,H_t)(f^2,H_i)(f_3,H_i)/h,(f^1,H_i)(f^2,H_t)(f_3,H_i)/h,(f^1,H_i)(f^2,H_i)(f_3,H_t)/h\},$$ 
then $z$ must be either zero of $(f,H_i)$ or zero of $(f,H_t)$, and hence
\begin{align*}
\nu^{[1]}_{(f^1,H_t)(f^2,H_i)(f_3,H_i)/h}(z)&+\nu^{[1]}_{(f^1,H_i)(f^2,H_t)(f_3,H_i)/h}(z)+\nu^{[1]}_{(f^1,H_i)(f^2,H_i)(f_3,H_t)/h}(z)\\
&=1\le \nu^{[1]}_{(f,H_i)}(z)+\nu^{[1]}_{(f,H_t)}(z).
\end{align*}
On the other hand, its is clear that
\begin{align}\label{4.7}
\nu^{[1]}_{(f^1,H_t)(f^2,H_i)(f_3,H_i)/h-(f^1,H_i)(f^2,H_t)(f_3,H_i)/h}\ge \sum_{\underset{v\ne i,t}{v=1}}^{q}\nu^{[1]}_{(f,H_v)}.
\end{align}
By Lemma \ref{4.3}, we see that $G_{it}$ is not constant. Then there is $\beta_{it}=(\beta_{it0},\beta_{it1})$, where $\beta_{itk}\in\N^m$ and $|\beta_{itk}|\le k\ (k=0,1)$, such that
$$ D^{\beta_{it}}=\left |
\begin{array}{cc}
\mathcal D^{\beta_{it0}}((f^1,H_t)(f^2,H_i)(f_3,H_i))&\mathcal D^{\beta_{it0}}((f^1,H_i)(f^2,H_t)(f_3,H_i))\\ 
\mathcal D^{\beta_{it1}}((f^1,H_t)(f^2,H_i)(f_3,H_i))&\mathcal D^{\beta_{it1}}((f^1,H_i)(f^2,H_t)(f_3,H_i))
\end{array}
\right|\not\equiv 0 $$
We put
$$ h_{it}=\frac{\left (
	(f^1,H_t)(f^2,H_i)(f_3,H_i)-(f^1,H_i)(f^2,H_t)(f_3,H_i)\right )D^{\beta_{it}}}{(f^1,H_t)(f^2,H_i)(f^3,H_i).(f^1,H_i)(f^2,H_t)(f_3,H_i).(f^1,H_i)(f^2,H_i)(f^3,H_t)}. $$
Hence, we see that $h_{it}\in S(1;G_{it})\subset S(1;f^1,f^2,f^3).$
Also by usual argument, we have
\begin{align*}
\nu_{h_{it}}&=\nu_{(F_1^{ti}-F_2^{ti})h_t}+\nu_W-\sum_{u=1}^3\nu_{F_u^{ti}h_t}\\
&\ge\sum_{\underset{v\ne i,t}{v=1}}^{q}\nu^{[1]}_{(f,H_v)}-\sum_{u=1}^3\nu^{[1]}_{F_u^{ti}h_t}\\
&\ge\sum_{\underset{v\ne i,t}{v=1}}^{q}\nu^{[1]}_{(f,H_v)}- \nu^{[1]}_{(f,H_i)}-\nu^{[1]}_{(f,H_t)}.
\end{align*}
Moreover, we have
$|h_{it}|\in S(0;G_{it})\subset S(0;f^1,f^2,f^3).$
The lemma is proved.\hfill$\square$

\begin{lemma}\label{4.8}
With the assumption of Theorem \ref{1.1}, let $f^1,f^2,f^3$ be three meromorphic mappings in $\mathcal F(f,\{H_i\}_{i=1}^{q},1)$.
Assume that there exist $i, j \in \{1,2,\ldots ,q\} \ (i\ne j)$ and  $\alpha\in\N^m$ with $|\alpha|=1$ such that
$\Phi^{\alpha}_{ij}\not\equiv 0.$
Then there exists a holomophic function $g_{ij}\in B(1,1;f^1,f^2,f^3)$ such that
\begin{align*}
\nu_{g_{ij}}\ge \sum_{u=1}^{3}\nu^{[n]}_{(f^u,H_i)}+\sum_{u=1}^3\nu^{[n]}_{(f^u,H_j)}+2\sum_{\overset{t=1}{t\ne i,j}}^{q}\nu^{[1]}_{(f,H_t)}-(2n+1)\nu^{[1]}_{(f,H_i)}-(n+1)\nu^{[1]}_{(f,H_j)}+\nu_j
\end{align*}
\end{lemma}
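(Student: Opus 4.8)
The plan is to reduce the Cartan auxiliary function $\Phi^{\alpha}_{ij}:=\Phi^{\alpha}(F_1^{ij},F_2^{ij},F_3^{ij})$ to a $2\times 2$ generalized Wronskian — which Proposition \ref{3.4} controls at level $1$ — and then to read off its zero divisor from the common factor that is forced on it because $f^1,f^2,f^3$ coincide on $\bigcup_{t\neq i,j}f^{-1}(H_t)$. First I would simplify $\Phi^{\alpha}_{ij}$ by column operations. Writing $A_u=(f^u,H_i)$, $B_u=(f^u,H_j)$, so that $1/F^{ij}_u=B_u/A_u$, and subtracting the first column of the defining determinant from the other two, one finds that $\Phi^{\alpha}_{ij}$ equals $F^{ij}_1F^{ij}_2F^{ij}_3$ times the $2\times 2$ generalized Wronskian of $\frac{B_2}{A_2}-\frac{B_1}{A_1}$ and $\frac{B_3}{A_3}-\frac{B_1}{A_1}$; clearing denominators, this becomes
$$\Phi^{\alpha}_{ij}=\frac{D^{\alpha}_{ij}}{\prod_{u=1}^{3}A_uB_u},\qquad D^{\alpha}_{ij}:=W_{\alpha}\bigl((A_1B_2-A_2B_1)A_3,\ (A_1B_3-A_3B_1)A_2\bigr),$$
where $W_{\alpha}(P,Q):=P\,\mathcal{D}^{\alpha}Q-Q\,\mathcal{D}^{\alpha}P$ and $D^{\alpha}_{ij}$ is holomorphic. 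Thus $D^{\alpha}_{ij}$ is the generalized Wronskian of an unreduced homogeneous representation of a meromorphic map $G_{ij}:\B^m(1)\to\P^1(\C)$ with coordinates $(A_1B_2-A_2B_1)A_3$ and $(A_1B_3-A_3B_1)A_2$; pulling out their common holomorphic factor $k$ and writing $G_{ij}=(\widetilde g_0:\widetilde g_1)$ in reduced form, one gets $D^{\alpha}_{ij}=k^{2}W_{\beta}(\widetilde g_0,\widetilde g_1)$ for an admissible pair $\beta=(0,\alpha)$ of level $\le 1$, and since $\Phi^{\alpha}_{ij}\not\equiv 0$ the map $G_{ij}$ is non-constant.

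Next I would put $g_{ij}:=z^{\alpha}D^{\alpha}_{ij}$ multiplied by the appropriate ratio of monomials in the $A_u,B_u$ that simultaneously makes $g_{ij}$ holomorphic and reduces its bidegree to $(1,1)$, and check $g_{ij}\in B(1,1;f^1,f^2,f^3)$ as in the proof of Lemma \ref{4.6}. Indeed $\|G_{ij}\|\le C\,\|f^1\|\,\|f^2\|\,\|f^3\|/|k|$, so $T_{G_{ij}}(r,r_0)\le\sum_{u=1}^3 T_{f^u}(r,r_0)-N_{k}(r,r_0)+O(1)$ and hence $S(1;G_{ij})\subset S(1;f^1,f^2,f^3)$; applying Proposition \ref{3.4} to $G_{ij}$ with the two coordinate linear forms gives $\bigl|z^{\alpha}W_{\beta}(\widetilde g_0,\widetilde g_1)/(\widetilde g_0\widetilde g_1)\bigr|\in S(1;G_{ij})$, and combining this with the bounded ratios $|(f^u,H_i)|/\|f^u\|$ and $|(f^u,H_j)|/\|f^u\|$ produces a bound $|g_{ij}|\le\|f^1\|\,\|f^2\|\,\|f^3\|\cdot g$ with $g\in S(1;f^1,f^2,f^3)$.

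The divisor estimate is then carried out pointwise. By the hypothesis $\dim f^{-1}(H_s)\cap f^{-1}(H_t)\le m-2$ for $s\neq t$, it suffices to verify the inequality at a generic point of each $f^{-1}(H_t)$, since off $\bigcup_{t}f^{-1}(H_t)$ the right-hand side vanishes and $g_{ij}$ is holomorphic. At a generic point of $f^{-1}(H_t)$ with $t\neq i,j$ the maps $f^1,f^2,f^3$ coincide, so both minors $A_1B_2-A_2B_1$ and $A_1B_3-A_3B_1$ vanish there; hence $k$ vanishes to order $\ge\nu^{[1]}_{(f,H_t)}$, and as $k$ enters $D^{\alpha}_{ij}$ squared this gives the term $2\nu^{[1]}_{(f,H_t)}$. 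At generic points of $f^{-1}(H_i)$ and $f^{-1}(H_j)$ one computes the order of $D^{\alpha}_{ij}$ — equivalently of $W_{\alpha}$ of the two minors — from the orders of the $A_u,B_u$ and of $\mathcal{D}^{\alpha}$ applied to them; this is where the definition of $\nu_j$, the majority-multiplicity divisor at $H_j$, enters, together with the truncated-counting argument from the proof of Lemma \ref{2.4}, and it produces the terms $\sum_u\nu^{[n]}_{(f^u,H_i)}+\sum_u\nu^{[n]}_{(f^u,H_j)}+\nu_j$ and the negative corrections $-(2n+1)\nu^{[1]}_{(f,H_i)}$ and $-(n+1)\nu^{[1]}_{(f,H_j)}$ coming from the monomial denominator.

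I expect the last step to be the main obstacle. Because $f^1,f^2,f^3$ share the hyperplanes only with truncation at level $1$, the orders $\nu_{(f^u,H_i)}$ and $\nu_{(f^u,H_j)}$ may genuinely differ with $u$, so one must combine the information carried by $\nu_j$ with truncated multiplicity estimates in order to choose the monomial correction so that $g_{ij}$ is at once holomorphic, of bidegree $(1,1)$, and of the asserted order — in particular to pin down the sharp constants $2n+1$ at $H_i$ and $n+1$ at $H_j$. By contrast, the identification of $\Phi^{\alpha}_{ij}$ with a $2\times 2$ Wronskian, the appeal to Proposition \ref{3.4}, and the coincidence of $f^1,f^2,f^3$ on $f^{-1}(H_t)$ for $t\neq i,j$ are routine.
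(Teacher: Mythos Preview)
Your reduction of $\Phi^{\alpha}_{ij}$ to a $2\times 2$ generalized Wronskian is correct and matches the paper's computation (its display (4.9)). The paper also fixes $g_{ij}$ explicitly: it is simply $g_{ij}:=\bigl(\prod_{u=1}^3(f^u,H_j)\bigr)\Phi^{\alpha}_{ij}$, which in your notation is $D^{\alpha}_{ij}/(A_1A_2A_3)$. You never write $g_{ij}$ down --- the phrase ``multiplied by the appropriate ratio of monomials that simultaneously makes $g_{ij}$ holomorphic and reduces its bidegree to $(1,1)$'' is circular, since the point is precisely to identify that ratio.

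More importantly, your route to $g_{ij}\in B(1,1;f^1,f^2,f^3)$ through the auxiliary map $G_{ij}$ does not give what you need. Proposition~\ref{3.4} applied to $G_{ij}$ yields $\bigl|D^{\alpha}_{ij}/(XY)\bigr|\in S(1;f^1,f^2,f^3)$ with $X=(A_1B_2-A_2B_1)A_3$ and $Y=(A_1B_3-A_3B_1)A_2$, so $|D^{\alpha}_{ij}|\le C\bigl(\prod_u\|f^u\|\bigr)^{2}\cdot s$ with $s\in S(1)$; this is bidegree $(2,1)$, not $(1,1)$. Dividing by $A_1A_2A_3$ to force bidegree $1$ does not help, because $|A_u|/\|f^u\|$ is only bounded above, not below, so you cannot absorb $1/|A_1A_2A_3|$ into the $S(1)$ factor using ``the bounded ratios $|(f^u,H_i)|/\|f^u\|$''. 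The paper avoids this by expanding $\Phi^{\alpha}_{ij}$ not as a Wronskian of the cross-products $X,Y$ but as a linear combination of \emph{logarithmic derivatives} $\mathcal D^{\alpha}(F_u^{ji})/F_u^{ji}$: one has
\[
g_{ij}=\sum_{\text{cyclic}}(f^1,H_i)(f^2,H_j)(f^3,H_j)\Bigl(\tfrac{\mathcal D^{\alpha}(F_3^{ji})}{F_3^{ji}}-\tfrac{\mathcal D^{\alpha}(F_2^{ji})}{F_2^{ji}}\Bigr),
\]
each term of which is manifestly $\le C\prod_u\|f^u\|\cdot\bigl|\mathcal D^{\alpha}(F_u^{ji})/F_u^{ji}\bigr|$, and then Proposition~\ref{3.4} applied to each individual $F_u^{ji}:\B^m(1)\to\P^1(\C)$ (rather than to $G_{ij}$) puts the logarithmic derivatives in $S(1;f^u)\subset S(1;f^1,f^2,f^3)$. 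This is the key algebraic point you are missing.

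Finally, your outline for the divisor estimate at $H_t$, $t\ne i,j$, is correct, but for $t=i$ and $t=j$ you only assert the conclusion. The paper does these cases explicitly (its Cases~2 and~3): at $H_i$ one uses the factorization $\Phi^{\alpha}_{ij}=F_1^{ij}\bigl[\cdots\bigr]$ with the smallest $F_u^{ij}$ pulled out, and at $H_j$ one factors $F_u^{ji}=h^{d_u}\varphi_u$ in a local coordinate and computes the Wronskian order to be $\ge d_2+d_3-\max\{0,\min\{1,d_2-d_3\}\}$, which is exactly what produces the $\nu_j$ term and the constant $n+1$. Without this computation the sharp constants $2n+1$ and $n+1$ remain unjustified.
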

\begin{proof}
We have
\begin{align}\label{4.9}
\begin{split}
\Phi^{\alpha}_{ij}
&=F_1^{ij}\cdot F_2^{ij}\cdot F_3^{ij}\cdot 
\left | 
\begin {array}{cccc}
1&1&1\\
F_1^{ji}&F_2^{ji} &F_3^{ji}\\
\mathcal {D}^{\alpha}(F_1^{ji}) &\mathcal {D}^{\alpha}(F_2^{ji}) &\mathcal {D}^{\alpha}(F_3^{ji})\\
\end {array}
\right|\\
&=\left | 
\begin {array}{cccc}
F_1^{ij}&F_2^{ij} &F_3^{ij}\\
1&1&1\\
F_1^{ij}\mathcal {D}^{\alpha}(F_2^{ji}) &F_2^{ij}\mathcal {D}^{\alpha}(F_2^{ji}) &F_3^{ij}\mathcal {D}^{\alpha}(F_3^{ji})
\end {array}
\right|\\
&=F_1^{ij}\bigl(\dfrac{\mathcal {D}^{\alpha}(F_3^{ji})}{F^{ji}_3}-\dfrac{\mathcal {D}^{\alpha}(F_2^{ji})}{F^{ji}_2}\bigl)
+F^{ij}_2\bigl(\frac{\mathcal {D}^{\alpha}(F_1^{ji})}{F^{ji}_1}-\frac{\mathcal {D}^{\alpha}(F_3^{ji})}{F^{ji}_3}\bigl)\\
&\ \ +F^{ij}_3\bigl(\frac{\mathcal {D}^{\alpha}(F_2^{ji})}{F^{ji}_2}-\frac{\mathcal {D}^{\alpha}(F_1^{ji})}{F^{ji}_1}\bigl).
\end{split}
\end{align}
This implies that
$$(\prod_{u=1}^3(f^u,H_j))\cdot\Phi^{\alpha}_{ij}=g_{ij},$$
where
\begin{align*}
g_{ij}=&(f^1,H_i)\cdot (f^2,H_j)\cdot (f^3,H_j)\cdot\left (\dfrac{\mathcal {D}^{\alpha}(F_3^{ji})}{F^{ji}_3}-\dfrac{\mathcal {D}^{\alpha}(F_2^{ji})}{F^{ji}_2}\right)\\
&+(f^1,H_j)\cdot (f^2,H_i)\cdot (f^3,H_j)\cdot\left (\dfrac{\mathcal {D}^{\alpha}(F_1^{ji})}{F^{ji}_1}-\dfrac{\mathcal {D}^{\alpha}(F_3^{ji})}{F^{ji}_3}\right)\\
&+(f^1,H_j)\cdot (f^2,H_j)\cdot (f^3,H_i)\cdot\left (\dfrac{\mathcal {D}^{\alpha}(F_2^{ji})}{F^{ji}_2}-\dfrac{\mathcal {D}^{\alpha}(F_1^{ji})}{F^{ji}_1}\right).
\end{align*}
Hence, we easily see that
$$|g_{ij}|\le C\cdot \|f^1\|\cdot \|f^2\|\cdot \|f^3\|\cdot\sum_{u=1}^3\left|\dfrac{\mathcal {D}^{\alpha}(F_u^{ji})}{F^{ji}_u}\right|,$$
where $C$ is a positive constant, and then $g_{ij}\in B(1;1;f^1,f^2,f^3)$.
It is clear that
\begin{align}\label{4.10}
\nu_{\Phi^{\alpha}_{ij}}=-\sum_{u=1}^3\nu_{(f^u,H_j)}+\nu_{g_{ij}}.
\end{align}

Hence, it is sufficient for us to prove that
\begin{align}\label{4.11} 
\begin{split}
\nu_{\Phi^\alpha_{ij}}\ge &-\sum_{u=1}^3\nu_{(f^u,H_j)}+\sum_{u=1}^{3}\nu^{[n]}_{(f^u,H_i)}+\sum_{u=1}^3\nu^{[n]}_{(f^u,H_j)}\\
&+2\sum_{\overset{t=1}{t\ne i,j}}^{q}\nu^{[1]}_{(f,H_t)}-(2n+1)\nu^{[1]}_{(f,H_i)}-(n+1)\nu^{[1]}_{(f,H_j)}+\nu_j.
\end{split}
\end{align}
We set 
$$S=\bigcup_{s\ne t}\{z;\nu_{(f,H_s)}(z)\cdot\nu_{(f,H_t)}(z)>0\}.$$
Then $S$ is an analytic subset of codimension at least two in $\B^m(1)$. 
We denote by $P$ the right hand side of the inequlity (\ref{4.11}).
In order to prove the inequality (\ref{4.11}), it is sufficient for us to show that 
\begin{align}\label{4.12}
\nu_{\Phi^\alpha_{ij}}(z)\ge P(z)
\end{align}
for all $z$ outside the set $I$. 

Indeed, for $z\not\in I$, we distinguish the following cases:

\vskip0.1cm
\noindent
\textit{Case 1:}  $z\in\supp\nu_{(f,H_t)}\ (t\ne i,j)$. We see that $P(z)=2$.  We write $\Phi^\alpha_{ij}$ in the form
$$ \Phi^{\alpha}_{ij}
=F_1^{ij}\cdot F_2^{ij}\cdot F_3^{ij}
\times\left | 
\begin {array}{cccc}
 \bigl (F_1^{ji}-F_{2}^{ji}\bigl ) & \bigl (F_1^{ji}-F_{3}^{ji}\bigl )\\
\mathcal{D}^{\alpha}\bigl ( F_1^{ji}-F_{2}^{ji}\bigl ) & \mathcal{D}^{\alpha}\bigl ( F_1^{ji}-F_{3}^{ji}\bigl )
\end {array}
\right|. $$
Then  by the assumption that $f^1,f^2,f^3$ coincide on $T_t$, we have $F_1^{ji}=F_2^{ji}=F_3^{ji}$ on $T_t$. The property of the general Wronskian implies that $\nu_{\Phi^{\alpha}_{ij}}(z)\ge 2=P(z)$.

\textit{Case 2:} $z\in \supp\nu_{(f,H_i)}$. We have 
$$P(z)= \sum_{u=1}^3\nu^{[n]}_{(f^u,H_i)}(z)-(2n+1)\le\min_{1\le u\le 3}\{\nu^{[n]}_{(f^u,H_i)}(z)\}-1.$$
We may assume that $\nu_{(f^1,H_{i})}(z)\le\nu_{(f^2,H_{i})}(z)\le\nu_{(f^3,H_{i})}(z)$. We write
\begin{align*}
\Phi^{\alpha}_{ij}
=F_1^{ij}\biggl [F_2^{ij}(F_1^{ji}-F_{2}^{ji})F_3^{ij}\mathcal{D}^{\alpha} (F_1^{ji}-F_{3}^{ji})-F_3^{ij}(F_1^{ji}-F_{3}^{ji})F_2^{ij}\mathcal{D}^{\alpha} (F_1^{ji}-F_{2}^{ji})\biggl ]
\end{align*}
It is easy to see that $F_2^{ij}(F_1^{ji}-F_{2}^{ji})$, $F_3^{ij}(F_1^{ji}-F_{3}^{ji})$ are holomorphic on a neighborhood of $z$, and 
\begin{align*}
&\nu^{\infty}_{F_3^{ij}\mathcal{D}^{\alpha}(F_1^{ji}-F_{3}^{ji})}(z)\le 1, \\
\text{ and }\ \ \ &\nu^{\infty}_{F_2^{ij}\mathcal{D}^{\alpha} (F_1^{ji}-F_{2}^{ji})}(z)\le 1.
\end{align*}
Therefore, it implies that
\begin{align*}
\nu_{\Phi^{\alpha}_{ij}}(z)&\ge \nu^{[n]}_{(f^1,H_i)}(z)-1\ge P(z).
\end{align*}

\noindent
\textit{Case 3:} $z\in \supp\nu_{(f,H_j)}$.  We may assume that 
$$\nu_{F_1^{ji}}(z)=d_1\ge \nu_{F_2^{ji}}(z)=d_2\ge\nu_{F_3^{ji}}(z)=d_3.$$
 Choose a holomorphic function $h$ on $\B^m(1)$ whose the multiplicity of zero at $z$ equal to $1$ such that
$F_u^{ji}=h^{d_u}\varphi_u\ (1\le u\le 3),$ where $\varphi_u$ are meromorphic on $\B^m(1)$ and 
holomorphic on a neighborhood of $z$. Then
\begin{align*}
 \Phi^{\alpha}_{ij}&=F_1^{ij}\cdot F_2^{ij}\cdot F_3^{ij}\cdot 
\left | 
\begin {array}{ccc}
F_2^{ji}-F_1^{ji}&F_3^{ji}-F_1^{ji}\\
\mathcal {D}^{\alpha}(F_2^{ji}-F_1^{ji}) &\mathcal {D}^{\alpha}(F_3^{ji}-F_1^{ji})\\
\end {array}
\right|\\
&= F_1^{ij}\cdot F_2^{ij}\cdot F_3^{ij}\cdot h^{d_2+d_3}\cdot 
\left | 
\begin {array}{ccc}
\varphi_2-h^{d_1-d_2}\varphi_1&\varphi_3-h^{d_1-d_3}\varphi_1\\
\dfrac{\mathcal {D}^{\alpha}(h^{d_2-d_3}\varphi_2-h^{d_1-d_3}\varphi_1)}{h^{d_2-d_3}} &\mathcal {D}^{\alpha}(\varphi_3-h^{d_1-d_3}\varphi_1)\\
\end {array}
\right|.
\end{align*}
This yields that
\begin{align*}
\nu_{\Phi^\alpha_{ij}}(z)&\ge -\sum_{u=1}^3 \nu_{(f^u,H_j)}(z)+d_2+d_3-\max\{0,\min\{1,d_2-d_3\}\}.
\end{align*}
On the other hand
\begin{align*}
P(z)&=-\sum_{u=1}^3 \nu_{(f^u,H_j)}(z)+\sum_{u=1}^3\min\{n,d_u\}-(n+1)+\chi_{\nu_j}\\
&=-\sum_{u=1}^3 \nu_{(f^u,H_j)}(z)+d_2+d_3-1+\chi_{\nu_j}\le \nu_{\Phi^\alpha_{ij}}(z).
\end{align*}

From the above three cases, the inequality (\ref{4.12}) holds. The lemma is proved.
\end{proof}

\begin{proof}[{\sc Proof of theorem \ref{1.1}}]  Suppose contrarily that there exist three distinct meromorphic mappings $f^1,f^2,f^3$ in $\mathcal F(f,\{H_i\}_{i=1}^{q},1)$.  By Lemma \ref{4.3}, we have $f^1\wedge f^2\wedge f^3\equiv 0$. Without loss of generality, we may assume that
$$\underbrace{V_1\cong \cdots\cong V_{l_1}}_{\text { group } 1}\not\cong
\underbrace{V_{l_1+1}\cong\cdots\cong V_{l_2}}_{\text { group } 2}\not\equiv \underbrace{V_{l_2+1}\cong \cdots\cong V_{l_3}}_{\text { group } 3}\not\cong\cdots \not\cong\underbrace{V_{l_{s-1}}\cong\cdots\cong V_{l_{s}}}_{\text { group } s},$$
where $l_s=q.$ 

Denote by $P$ the set of all $i\in\{1,\ldots ,q\}$ satisfying there exist $j\in\{1,\ldots ,q\}\setminus\{i\}$ such that $V_i\not\cong V_j$ and $\Phi^{\alpha}_{ij}\equiv 0$ for all $\alpha\in\Z^m_+$ with $|\alpha|\le 1.$ We consider the following three cases.

\noindent
\textit{Case 1:} $\sharp P\ge 2$. Then $P$ contains two elements $i,j$. Then we have $\Phi^{\alpha}_{ij}=\Phi^{\alpha}_{ji}=0$ for all $\alpha\in\Z^m_+$ with $|\alpha|\le 1.$ By Lemma \ref{2.3}, there exist two functions, for instance they are $F_1^{ij}$ and $F^2_{ij}$, and a constant $\lambda$ such that $F_1^{ij}=\lambda F^2_{ij}$. This yields that $F_1^{ij}=F^{ij}_2$ (by Lemma \ref{4.4}). Then by Lemma \ref{4.6} (ii), we easily see that $V_i\cong V_j$, i.e., $V_i$ and $V_j$ belong to the same group in the above partition. 

Without loss of generality, we may assume that $i=1$ and $j=2$. Since $f^1,f^2,f^3$ are supposed to be distinct, the number of each group in the above partition is less than $n+1$. Hence we have $V_1\cong V_2\not\cong V_t$ for all $t\in\{n+1,\ldots ,q\}$. By Lemma \ref{4.6} (ii), we have
\begin{align*}
\nu_{h_{1t}}&\ge\sum_{s\ne 1,t}\nu^{[1]}_{(f,H_s)}- \nu^{[1]}_{(f,H_1)}(r)-\nu^{[1]}_{(f,H_t)}\\
 \text{and }\nu_{h_{2t}}&\ge\sum_{s\ne 2,t}\nu^{[1]}_{(f,H_s)}- \nu^{[1]}_{(f,H_2)}(r)-\nu^{[1]}_{(f,H_t)}.
\end{align*}
Summing-up both sides of the above two inequalities, we get
\begin{align*}
\nu_{h_{1t}}+\nu_{h_{2t}}&\ge\sum_{s\ne 1,2,t}\nu^{[1]}_{(f,H_s)}- 2\nu^{[1]}_{(f,H_t)}.
\end{align*}
After summing-up both sides of the above inequalities over all $t\in\{n+1,\ldots,q\}$, we easily obtain
\begin{align*}
\sum_{i=n+1}^{q}(\nu_{h_{1t}}+\nu_{h_{2t}})&\ge (n+2)\sum_{t=3}^n\nu^{[1]}_{(f,H_t)}+n\sum_{t=n+1}^{q}\nu^{[1]}_{(f,H_t)}\\
&\ge n\sum_{t=3}^{q}\nu^{[1]}_{(f,H_t)}\ge\frac{1}{3}\sum_{u=1}^3\sum_{t=3}^{q}\nu^{[n]}_{(f^u,H_t)}.
\end{align*}
This implies that
$$ \nu_{\prod_{i=n+1}^{q}(h_{1t}h_{2t})}\ge\frac{1}{3}\sum_{u=1}^3\sum_{t=3}^{q}\nu^{[n]}_{(f^u,H_t)}. $$
Since $\prod_{i=n+1}^{q}(h_{1t}h_{2t})\in B(0,2(q-n);f^1,f^2,f^3)$, from Proposition \ref{3.5} we have
$$ q\le n+1+ 3\rho\frac{n(n+1)}{2}+3\rho 2(q-n).$$
This is a contradiction.

\vskip0.2cm 
\noindent
{\textit{Case 2:}} $\sharp P=1$. We assume that $P=\{1\}$. We easily see that $V_{1}\not\cong V_i$ for all $i=2,\ldots ,q$ (otherwise $i\in P$, this contradicts to $\sharp P=1$). Then by Lemma \ref{4.6} (ii), we have
\begin{align*}
\nu_{h_{1i}}\ge\sum_{s\ne 1,t}\nu^{[1]}_{(f,H_s)}- \nu^{[1]}_{(f,H_1)}(r)-\nu^{[1]}_{(f,H_i)}.
\end{align*}
Summing-up both sides of the above inequality over all $i=2,\ldots ,q$, we get
\begin{align}\label{4.13}
\sum_{i=2}^{q}\nu_{h_{1i}}\ge& (q-3)\sum_{i=2}^{q}\nu^{[1]}_{(f,H_i)}-(q-1)\nu^{[1]}_{(f,H_1)}.
\end{align}
We also see that $i\not\in P$ for all $2\le i\le q$. Set
$$ \sigma (i)=\begin{cases}
i+n&\text{ if }i+n\le q,\\
i+n-q+1&\text{ if } i+n>q.
\end{cases}$$
Then $i$ and $\sigma (i)$ belong to two distinct groups, i.e., $V_i\not\cong V_{\sigma (i)}$, for all $i\in\{2,\ldots ,q\}$, and hence $\Phi^{\alpha}_{i\sigma (i)}\not\equiv 0$ for some $\alpha\in\N^m$ with $|\alpha|\le 1$. By Lemma \ref{4.8} we have
\begin{align*}
\nu_{g_{i\sigma(i)}}\ge \sum_{u=1}^{3}\sum_{t=i,\sigma (i)}\nu^{[n]}_{(f^u,H_t)}-(2n+1)\nu^{[1]}_{(f,H_i)}-(n+1)\nu^{[1]}_{(f,H_{\sigma (i)})}+2\sum_{\overset{t=1}{t\ne i,{\sigma (i)}}}^{q}\nu_{(f,H_t)}^{[1]}.
\end{align*}
Summing-up both sides of the above inequality over all $i=2,\ldots ,q$, we get
\begin{align*}
\sum_{i=2}^{q}\nu_{g_{i\sigma(i)}}&\ge 2\sum_{i=2}^{q}\sum_{u=1}^{3}\nu^{[n]}_{(f^u,H_i)}+(2q-3n-8)\sum_{i=2}^{q}\nu^{[1]}_{(f,H_i)}+2(q-1)\nu^{[1]}_{(f,H_1)}\\
&\ge 2\sum_{i=2}^{q}\sum_{u=1}^{3}\nu^{[n]}_{(f^u,H_i)}+\dfrac{4q-3n-14}{3}\sum_{u=1}^3\sum_{i=2}^{q}\nu^{[1]}_{(f^u,H_i)}-2\sum_{i=2}^{q}\nu_{h_{1i}}\\
&\ge \dfrac{4q+3n-14}{3n}\sum_{u=1}^{3}\sum_{i=2}^{q}\nu^{[n]}_{(f^u,H_i)}-2\sum_{i=2}^{q}\nu_{h_{1i}}.
\end{align*}
This implies that
$$ \nu_{\prod_{i=2}^{q}(g_{i\sigma(i)}h_{1i}^2)}\ge \dfrac{4q+3n-14}{3n}\sum_{u=1}^{3}\sum_{i=2}^{q}\nu^{[n]}_{(f^u,H_i)}\ge \dfrac{11n-6}{3n}\sum_{u=1}^{3}\sum_{i=2}^{q}\nu^{[n]}_{(f^u,H_i)}.$$
It is clear that $\prod_{i=2}^{q}(g_{i\sigma(i)}h_{1i}^2)\in B(q-1,q-1;f^1,f^2,f^3)$. Then, from Proposition \ref{3.5}, we have
$$ q\le n+1+3\rho\frac{n(n+2)}{2}+\frac{3n}{11n-6}((q-1)+\rho(q-1)).$$
This is a contradiction. 

\noindent
{\textit{Case 3:}} $P=\emptyset$. Then for all $i\ne j$, by Lemma \ref{4.8} we have
\begin{align*}
\nu_{g_{ij}}&\ge \sum_{u=1}^{3}\nu^{[n]}_{(f^u,H_i)}+\sum_{k=1}^3 \nu^{[n]}_{(f^k,H_j)}+2\sum_{\overset{t=1}{t\ne i,j}}^{q}\nu_{(f,H_t)}^{[1]}\\
&\ \ -(2n+1)\nu^{[1]}_{(f,H_i)}-(n+1)\nu^{[1]}_{(f,H_j)}+\nu_j.
\end{align*}
Setting 
$$\gamma (i)=\begin{cases}
i+n&\text{ if }i\le q-n\\
i+n-q&\text{ if }i>q-n
\end{cases}$$
and summing-up both sides of the above inequality over all pairs $(i,\gamma (i))$, we get
\begin{align}\label{4.14}
\sum_{i=1}^q\nu_{g_{i\gamma (i)}}\ge &2\sum_{u=1}^3\sum_{t=1}^{q}\nu^{[n]}_{(f^u,H_t)}+(2q-3n-6)\sum_{t=1}^{q}\nu^{[1]}_{(f,H_t)}+\sum_{t=1}^{q}\nu_t.
\end{align}

On the other hand, by Lemma \ref{4.5}, we see that $V_j\not\sim V_l$ for all $j\ne l$. Hence, we have
$$P_{st}^{i\gamma (i)}:\overset{Def}=(f^s,H_i)(f^t,H_{\gamma(i)})-(f^t,H_{\gamma(i)})(f^s,H_i)\not\equiv 0\ (s\ne t, 1\le i\le q).$$
\begin{claim}\label{4.15}
With $i\ne j\ne\sigma (i)$, for every $z\in f^{-1}(H_j)$ we have 
$$\sum_{1\le s<t\le 3}\nu_{P_{st}^{i\gamma (i)}}(z)\ge 4\nu_{(f,H_j)}^{[1]}(z)-\nu_j(z).$$ 
\end{claim}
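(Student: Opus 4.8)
\textbf{Proof plan for Claim \ref{4.15}.}
The plan is to do a local analysis at a point $z\in f^{-1}(H_j)$ with $i\ne j\ne\gamma(i)$, comparing the orders of vanishing of the three functions $P^{i\gamma(i)}_{12},P^{i\gamma(i)}_{13},P^{i\gamma(i)}_{23}$ at $z$ against the quantity $4\nu^{[1]}_{(f,H_j)}(z)-\nu_j(z)$. Since the three mappings $f^1,f^2,f^3$ share all the $H_t$ regardless of multiplicity, they agree on $f^{-1}(H_j)$, so at $z$ we have $(f^u,H_j)(z)=0$ for all $u$; and by the codimension assumption $\dim f^{-1}(H_s)\cap f^{-1}(H_t)\le m-2$ we may assume $z\notin f^{-1}(H_t)$ for any $t\ne j$, so the functions $(f^u,H_i)$ and $(f^u,H_{\gamma(i)})$ are nonvanishing units near $z$ for every $u$. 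First I would expand
$$P^{i\gamma(i)}_{st}=(f^s,H_i)(f^t,H_{\gamma(i)})-(f^t,H_i)(f^s,H_{\gamma(i)}),$$
divide through by the units, and reduce the estimate to one on the quotients $F^{i\gamma(i)}_u:=(f^u,H_i)/(f^u,H_{\gamma(i)})$ — indeed up to a unit $P^{i\gamma(i)}_{st}$ equals $(f^s,H_{\gamma(i)})(f^t,H_{\gamma(i)})\bigl(F^{i\gamma(i)}_s-F^{i\gamma(i)}_t\bigr)$, so $\nu_{P^{i\gamma(i)}_{st}}(z)=\nu_{F^{i\gamma(i)}_s-F^{i\gamma(i)}_t}(z)$ near such $z$.

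Next I would set $d_u:=\nu_{(f^u,H_j)}(z)\ge 1$ for $u=1,2,3$ and, after relabeling, assume $d_1\le d_2\le d_3$, so that $\nu^{[1]}_{(f,H_j)}(z)=1$ and, by the definition of the divisor $\nu_j$, $\nu_j(z)=d_2$ (the second-largest multiplicity, equivalently the value taken by at least two of the three). Thus the right-hand side $4\nu^{[1]}_{(f,H_j)}(z)-\nu_j(z)=4-d_2$; in particular the claim is trivial once $d_2\ge 4$, and we need a genuine argument only when $d_2\in\{1,2,3\}$. I would then analyze the vanishing orders of $F^{i\gamma(i)}_s-F^{i\gamma(i)}_t$ at $z$: since $(f^u,H_{\gamma(i)})$ is a unit near $z$, $\nu_{F^{i\gamma(i)}_u}(z)=\nu_{(f^u,H_i)}(z)=0$, so each $F^{i\gamma(i)}_u$ is a nonzero holomorphic unit near $z$ and the differences $F^{i\gamma(i)}_s-F^{i\gamma(i)}_t$ have nonnegative order; the point is to bound their orders from below using the relation that the $f^u$ agree at $z$ on $f^{-1}(H_j)$. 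Actually the cleanest route is to use $f^1\wedge f^2\wedge f^3\equiv 0$ (Lemma \ref{4.3}): this gives a meromorphic $\alpha$ with $F^{i\gamma(i)}_3=\alpha F^{i\gamma(i)}_1+(1-\alpha)F^{i\gamma(i)}_2$, hence $F^{i\gamma(i)}_3-F^{i\gamma(i)}_1=\alpha(F^{i\gamma(i)}_2-F^{i\gamma(i)}_1)$ and $F^{i\gamma(i)}_3-F^{i\gamma(i)}_2=(1-\alpha)(F^{i\gamma(i)}_1-F^{i\gamma(i)}_2)$, reducing the three orders to the single order of $F^{i\gamma(i)}_1-F^{i\gamma(i)}_2$ together with $\nu_\alpha(z)$ and $\nu_{1-\alpha}(z)$, which are in turn controlled by the $d_u$'s via a local computation at $z$ using the identity on $f^{-1}(H_j)$.

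The main obstacle I anticipate is the bookkeeping in the case $d_1<d_2=d_3$ versus $d_1\le d_2<d_3$ versus all equal: in each of these the distribution of the vanishing among the three differences $F^{i\gamma(i)}_s-F^{i\gamma(i)}_t$ is different, and one must check in each subcase that the sum of the three orders is at least $4-d_2$ — with the worst subcase being $d_1=d_2=d_3=1$, where we need the three differences to contribute total order at least $3$, which forces using that $f^1\ne f^2\ne f^3\ne f^1$ (so no two of the $F^{i\gamma(i)}_u$ coincide identically near enough data, hence at least some differences vanish to order $\ge1$) in combination with the structure coming from Lemma \ref{4.3}. I would organize the proof as a short case split on the multiset $\{d_1,d_2,d_3\}$, in each case writing $F^{i\gamma(i)}_u=(f^u,H_i)\cdot\bigl((f^u,H_{\gamma(i)})\bigr)^{-1}$ explicitly, extracting the common local factor from the numerators of the differences coming from the shared-zero condition at $z$, and reading off the orders; the inequality $\sum_{1\le s<t\le3}\nu_{P^{i\gamma(i)}_{st}}(z)\ge 4\nu^{[1]}_{(f,H_j)}(z)-\nu_j(z)$ then follows by adding up, with the $-\nu_j(z)$ term on the right being exactly the slack needed to absorb the case where two of the $d_u$ are large and equal.
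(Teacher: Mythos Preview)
There is a genuine gap: you have misread the definition of $\nu_j$. In the paper, $\nu_j$ is defined as \emph{the divisor whose support is} the closure of the set of points where two of the three multiplicities $\nu_{(f^u,H_j)}$ coincide and the third is at least as large; it is therefore a reduced divisor with values in $\{0,1\}$, not the median $d_2$ of the three multiplicities. Concretely, after sorting $d_1\le d_2\le d_3$ one has $\nu_j(z)=1$ precisely when $d_1=d_2$, and $\nu_j(z)=0$ when $d_1<d_2$. So the right-hand side of the claim is always either $3$ or $4$, never $4-d_2$, and the inequality you are aiming for is strictly weaker than what has to be proved. In particular your ``worst subcase'' $d_1=d_2=d_3=1$ (target $3$) is the easy case; the actual hard case is $d_1<d_2$, where $\nu_j(z)=0$ and you must show $\sum_{s<t}\nu_{P^{i\gamma(i)}_{st}}(z)\ge 4$.

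The paper's proof is a clean two-line split. When $\nu_j(z)=1$ each $P^{i\gamma(i)}_{st}$ vanishes at $z$ (since the maps agree on $f^{-1}(H_j)$), so the sum is $\ge 3$. When $\nu_j(z)=0$, say $d_1<d_2\le d_3$, the identity $\det(V_i,V_{\gamma(i)},V_j)\equiv 0$ is expanded along the $j$-column to give
\[
(f^1,H_j)\,P^{i\gamma(i)}_{23}=(f^2,H_j)\,P^{i\gamma(i)}_{13}-(f^3,H_j)\,P^{i\gamma(i)}_{12},
\]
and since the right side has order $\ge d_2+1>d_1+1$ at $z$ one reads off $\nu_{P^{i\gamma(i)}_{23}}(z)\ge 2$, whence the sum is $\ge 1+1+2=4$. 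Your route via $F^{i\gamma(i)}_3=\alpha F^{i\gamma(i)}_1+(1-\alpha)F^{i\gamma(i)}_2$ is morally the same relation, but you never explain how the $d_u$ control $\nu_\alpha(z)$ or $\nu_{1-\alpha}(z)$; the cofactor expansion above is the direct way to extract the needed extra order of vanishing, and your plan as written does not reach it.
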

Indeed, for $z\in f^{-1}(H_j)\cap\sup_{\nu_j}$, we have
$$ 4\nu_{(f,H_j)}^{[1]}(z)-\nu_j(z)\le 4-1=3\le  \sum_{1\le s<t\le 3}\nu_{P_{st}^{i\gamma (i)}}(z).$$
Now, for $z\in f^{-1}(H_j)\setminus\sup{\nu_j}$, we may assume that $\nu_{(f^1,H_j)}(z)<\nu_{(f^2,H_j)}(z)\le\nu_{(f^3,H_j)}(z)$. Since $f^1\wedge f^2\wedge f^3\equiv 0$, we have $\det (V_i,V_{\gamma (i)},V_j)\equiv 0$, and hence
\begin{align*}
(f^1,H_j)P_{23}^{i\gamma (i)}=(f^2,H_j)P_{13}^{i\gamma (i)}-(f^3,H_j)P_{12}^{i\gamma (i)}.
\end{align*}
This yields that 
$$ \nu_{P_{23}^{i\gamma (i)}}(z)\ge 2 $$
and hence 
$$\sum_{1\le s<t\le 3}\nu_{P_{st}^{i\gamma (i)}}(z)\ge 4=4\nu_{(f,H_j)}^{[1]}(z)-\nu_j(z).$$
Hence, the claim is proved.

\vskip0.2cm 
On the other hand, with $j=i$ or $j=\sigma(i)$, for every $z\in f^{-1}(H_j)$ we see that 
\begin{align*}
\nu_{P_{st}^{i\gamma (i)}}(z)\ge&\min\{\nu_{(f^s,H_j)}(z),\nu_{(f^t,H_j)}(z)\}\\
\ge& \nu^{[n]}_{(f^s,H_j)}(z)+\nu^{[n]}_{(f^t,H_j)}(z)-n \nu^{[1]}_{(f,H_j)}(z).
\end{align*}
$$\text{and hence }\sum_{1\le s<t\le 3}\nu_{P_{st}^{i\gamma (i)}}(z)\ge 2\sum_{u=1}^3\nu^{[n]}_{(f^u,H_j)}(z)-3n\nu^{[1]}_{(f,H_j)}(z).$$
Combining this inequality and the above claim, we have
$$\sum_{1\le s<t\le 3}\nu_{P_{st}^{i\gamma (i)}}(z)\ge \sum_{j=i,\gamma (i)}\left (2\sum_{u=1}^3\nu^{[n]}_{(f^u,H_j)}(z)-3n\nu^{[1]}_{(f,H_j)}(z)\right )+\sum_{\overset{j=1}{j\ne i,\gamma (i)}}^q(4\nu^{[1]}_{(f,H_j)}(z)-\nu_j(z)).$$
This yields that
\begin{align}\label{4.16}
\sum_{1\le s<t\le 3}\nu_{P_{st}^{i\gamma (i)}}\ge &\sum_{j=i,\gamma (i)}\left (2\sum_{u=1}^3\nu^{[n]}_{(f^u,H_j)}-3n\nu^{[1]}_{(f,H_j)}\right)+\sum_{\overset{j=1}{j\ne i,\gamma (i)}}^q(4\nu^{[1]}_{(f,H_j)}-\nu_j).
\end{align}
On the other hand, we easily see that $\prod_{1\le s<t\le 3}P_{st}^{i\gamma (i)}\in B(2,0;f^1,f^2,f^3)$.

Summing-up both sides of the above inequality over all $i$, we obtain
\begin{align*}
\sum_{i=1}^q\sum_{1\le s<t\le 3}\nu_{P_{st}^{i\gamma (i)}}\ge &4\sum_{u=1}^3\sum_{i=1}^{q}\nu^{[n]}_{(f^u,H_i)}+(4q-6n-8)\sum_{i=1}^{q}\nu^{[1]}_{(f,H_i)}-(q-2)\sum_{i=1}^{q}\nu_i.
%\ge&4\sum_{u=1}^3\sum_{i=1}^{q}\nu^{[n]}_{(f^u,H_i)}+(2n+4)\sum_{i=1}^{q}\nu^{[1]}_{(f,H_i)}-(q-2)\sum_{i=1}^{q}\nu_i.
\end{align*}
Thus
\begin{align*}
\sum_{i=1}^{q}\nu_i\ge&\dfrac{4}{q-2}\sum_{u=1}^3\sum_{i=1}^{q}\nu^{[n]}_{(f^u,H_i)}+\dfrac{4q-6n-8}{q-2}\sum_{i=1}^{q}\nu^{[1]}_{(f,H_i)}-\frac{1}{q-2}\sum_{i=1}^q\sum_{1\le s<t\le 3}\nu_{P_{st}^{i\gamma (i)}}.
\end{align*}
Using this estimate, from (\ref{4.14}) we have
\begin{align*}
\sum_{i=1}^q&\nu_{g_{i\gamma (i)}}+\frac{1}{q-2}\sum_{i=1}^q\sum_{1\le s<t\le 3}\nu_{P_{st}^{i\gamma (i)}}\\
&\ge \left (2+\dfrac{4}{q-2}\right)\sum_{u=1}^3\sum_{t=1}^{q}\nu^{[n]}_{(f^u,H_t)}+\left (2q-3n-6+\dfrac{4q-6n-8}{q-2}\right)\sum_{i=1}^{q}\nu^{[1]}_{(f^u,H_i)}\\
&\ge \left (2+\dfrac{4}{q-2}+\frac{n-2}{3n}+\frac{4q-6n-8}{3n(q-2)}\right)\sum_{u=1}^3\sum_{t=1}^{q}\nu^{[n]}_{(f^u,H_t)}.
\end{align*}
This yields that
$$\nu_{\prod_{i=1}^q(g_{i\gamma (i)}^{q-2}P_{12}^{i\gamma (i)}P_{13}^{i\gamma (i)}P_{23}^{i\gamma (i)})}\ge \left (2q+\frac{(n-2)(q-2)+4q-6n-8}{3n}\right)\sum_{u=1}^3\sum_{t=1}^{q}\nu^{[n]}_{(f^u,H_t)}.$$
We see that $\prod_{i=1}^q(g_{i\gamma (i)}^{q-2}P_{12}^{i\gamma (i)}P_{13}^{i\gamma (i)}P_{23}^{i\gamma (i)})\in B(q^2,q(q-2);f^1,f^2,f^3)$. Then, from Proposition \ref{3.5}, we have
$$ q\le n+1+3\rho\frac{n(n+1)}{2}+\frac{3n(q^2+\rho q(q-2))}{6nq+(n+2)(q-2)-6n}.$$
 This is a contradiction. 

Hence the supposition is false. Therefore, $\sharp\mathcal F(f,\{H_i\}_{i=1}^{q},1)\le 2$. We complete the proof of the theorem.
\end{proof}

\section{Proof of Theorem \ref{1.2}}
Since in the case where $M=\C^m$, the theorem has already proved by the author in \cite{SDQ11}, without loss of generality, in this proof we only consider the case where $M=\mathbb B^m(1)$.

 In order to prove Theorem \ref{1.2}, we firstly prove the following theorem, which is the generalization of the uniqueness theorem for meromorphic mappings of $\C^m$ into $\P^n(\C)$ sharing $2n+3$ hyperplanes in general position regardless of multiplicity.

\begin{theorem}\label{5.1}
Let $M$ be a complete connected K\"{a}hler manifold whose universal covering is biholomorphic to $\C^m$ or the unit ball $\B^m(1)$ of $\C^m$, and let $f$ be a linearly non-degenerate meromorphic maps of $M$ into $\P^n(\C)\ (n\ge 2)$. Let $H_1,\ldots,H_q$ be $q$ hyperplanes of $\P^n(\C)$ in general possition. Assume that $f$ satisfies the condition $(C_\rho)$ and
$$\dim f^{-1}(H_i)\cap f^{-1}(H_j) \le m-2 \quad (1 \le i<j \le q).$$ 
Then $\sharp \mathcal {F}(f,\{H_i\}_{i=1}^q,1)=1$ if $q>n+1+\rho n(n+1)+ \dfrac{2nq}{q+2n-2}$, in particular if $q>2n+2+2\rho n(n+1)$.
\end{theorem}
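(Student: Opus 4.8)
The plan is to prove Theorem \ref{5.1} by a contradiction argument modelled on the proofs of Theorem \ref{1.1} and (more directly) Proposition \ref{3.5}. Suppose there exist two distinct maps $f^1,f^2\in\mathcal F(f,\{H_i\}_{i=1}^q,1)$. By Lemma \ref{4.2} (valid since $q>n+1+\rho n(n+1)$ in particular gives $q\ge 2n+1+\rho n(n+1)$ when $q>2n+2+2\rho n(n+1)$; otherwise one argues directly) both $f^1$ and $f^2$ are linearly non-degenerate. The goal is then to build a single nonzero holomorphic function $h$ lying in some $B(p,l_0;f^1,f^2)$ whose divisor dominates $\lambda\sum_{u=1}^2\sum_{i=1}^q\nu^{[n]}_{(f^u,H_i)}$, and then invoke Proposition \ref{3.5} with $k=2$ to force $q\le n+1+\rho n(n+1)+\frac1\lambda(p+\rho l_0)$, contradicting the hypothesis on $q$.

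\textbf{Construction of the auxiliary function.} For each pair of distinct indices $i,j$ set $F_u^{ij}=(f^u,H_i)/(f^u,H_j)$. Because $f^1\ne f^2$ and they agree on $\bigcup_i f^{-1}(H_i)$, for each fixed $j$ at most finitely many ratios $F_1^{ij}/F_2^{ij}$ can be constant (Lemma \ref{4.4}-type reasoning rules out the constant $\ne 1$ case, and $=1$ would force agreement on too large a set), so we may select, after reindexing, pairs $(i,\sigma(i))$ — with $\sigma$ a cyclic shift by $n$ — for which $F_1^{i\sigma(i)}\not\equiv F_2^{i\sigma(i)}$, equivalently $P_i:=(f^1,H_i)(f^2,H_{\sigma(i)})-(f^2,H_i)(f^1,H_{\sigma(i)})\not\equiv 0$. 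One checks $P_i\in B(2,0;f^1,f^2)$. Arguing locally at a point $z\in f^{-1}(H_t)$: if $t\ne i,\sigma(i)$ then $f^1(z)=f^2(z)\in H_t$ forces $\nu_{P_i}(z)\ge\nu^{[1]}_{(f,H_t)}(z)$ (the two columns become proportional); if $t=i$ or $t=\sigma(i)$ then $\nu_{P_i}(z)\ge\min_u\nu_{(f^u,H_t)}(z)\ge\sum_u\nu^{[n]}_{(f^u,H_t)}(z)-n\,\nu^{[1]}_{(f,H_t)}(z)$. Summing over $z$ and then over $i=1,\dots,q$ gives, with $\nu^{[1]}_{(f,H_i)}\le\frac1{2n}\sum_u\nu^{[n]}_{(f^u,H_i)}$,
\begin{align*}
\nu_{\prod_{i=1}^q P_i}\ge 2\sum_{u=1}^2\sum_{i=1}^q\nu^{[n]}_{(f^u,H_i)}+(q-2n-2)\sum_{i=1}^q\nu^{[1]}_{(f,H_i)}\ge\Bigl(2+\frac{q-2n-2}{2n}\Bigr)\sum_{u=1}^2\sum_{i=1}^q\nu^{[n]}_{(f^u,H_i)}.
\end{align*}

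\textbf{Applying Proposition \ref{3.5}.} Since $\prod_{i=1}^q P_i\in B(2q,0;f^1,f^2)$ and $\nu_{\prod P_i}\ge\lambda\sum_{u}\sum_i\nu^{[n]}_{(f^u,H_i)}$ with $\lambda=2+\frac{q-2n-2}{2n}=\frac{q+2n-2}{2n}$, Proposition \ref{3.5} (with $k=2$, $p=2q$, $l_0=0$) yields $q\le n+1+\rho n(n+1)+\frac{2q}{\lambda}=n+1+\rho n(n+1)+\frac{4nq}{q+2n-2}$. Hmm — this is off by a factor of $2$ from the claimed bound, which tells me the counting should be arranged so that each $P_i$ is used to cover both $f^1$ and $f^2$ contributions with the doubled weight already built in; more carefully, one observes $\lambda\ge 2\cdot\frac{q+2n-2}{2n}$ is not what comes out, so instead the right bookkeeping pairs the estimate as $\nu_{\prod P_i}\ge\frac{q+2n-2}{2n}\sum\sum\nu^{[n]}$ against $p=q$ per map, i.e. one treats $\prod P_i$ as lying in $B(q,0)$ relative to the sum $\sum_u T_{f^u}$ appearing in Proposition \ref{3.5}. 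Then $q\le n+1+\rho n(n+1)+\frac{q}{\lambda}=n+1+\rho n(n+1)+\frac{2nq}{q+2n-2}$, which is exactly the stated inequality; the final clause $q>2n+2+2\rho n(n+1)$ follows since then $\frac{2nq}{q+2n-2}<\frac q2<q-n-1-\rho n(n+1)$.

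\textbf{The main obstacle} will be getting the degree/weight bookkeeping in Proposition \ref{3.5} exactly right — in particular making the factor-of-two between "two maps" and "$2q$ linear factors per product" cancel correctly against the $[n]$-truncation ratio $\nu^{[1]}\le\frac1{2n}\sum_u\nu^{[n]}_{(f^u,\cdot)}$, so that the clean coefficient $\frac{2nq}{q+2n-2}$ emerges rather than twice that. A secondary technical point is verifying $\prod_i P_i\not\equiv 0$ and the membership $\prod_i P_i\in B(\cdot,0;f^1,f^2)$ using the definition of bounded integration and the trivial bound $|P_i|\le C\|f^1\|\,\|f^2\|$; this is routine given Definition \ref{3.3} and the sub-additivity of bi-degrees under products. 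Deducing Theorem \ref{1.2} from Theorem \ref{5.1} will then be the standard device of replacing the original $q$ hyperplanes by the $p=\binom{2n+2}{n+1}$-fold "derived" family obtained from subsets of size $n+1$, exactly as in \cite{SDQ11}, at the cost of the enlarged bound on $q$ recorded in the statement.
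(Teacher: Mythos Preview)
Your approach is essentially identical to the paper's: argue by contradiction, group the ratios $(f^1,H_i)/(f^2,H_i)$, use a cyclic shift $\sigma$ by $n$ so that $P_i=(f^1,H_i)(f^2,H_{\sigma(i)})-(f^2,H_i)(f^1,H_{\sigma(i)})\not\equiv 0$, estimate $\nu_{\prod_i P_i}$, and apply Proposition~\ref{3.5} with $k=2$. The paper carries this out without the detour through a wrong factor.

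The one point to clean up is the bi-degree bookkeeping, where you stumble and then self-correct. In Definition~\ref{3.3} the exponent $p$ is \emph{per map}: $|h|\le \|f^1\|^p\cdots\|f^k\|^p\,g$. Since $|P_i|\le C\|f^1\|\cdot\|f^2\|$, one has $P_i\in B(1,0;f^1,f^2)$ (not $B(2,0)$), hence $\prod_{i=1}^q P_i\in B(q,0;f^1,f^2)$ directly, and Proposition~\ref{3.5} with $k=2$, $p=q$, $l_0=0$, $\lambda=\dfrac{q+2n-2}{2n}$ yields
\[
q\le n+1+\rho\, n(n+1)+\frac{2nq}{q+2n-2}
\]
without any fudging. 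Your final paragraph sketching a route to Theorem~\ref{1.2} is extraneous to the task and, in any case, does not match the paper's actual proof of Theorem~\ref{1.2}, which does \emph{not} simply enlarge the hyperplane family but instead combines the $P_i$-estimates with an auxiliary Wronskian map built from the $h_I=\prod_{i\in I}h_i$.
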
 
\begin{proof}
Suppose contrarily that $\sharp \mathcal {F}(f,\{H_i\}_{i=1}^q,1)>1$. Then there exists two distinct elements $f^1,f^2$ of $\mathcal F(f,\{H_i\}_{i=1}^{q},1).$ 
By changing indices if necessary, we may assume that
$$\underbrace{\dfrac{(f^1,H_1)}{(f^2,H_1)}\equiv \dfrac{(f^1,H_2)}{(f^2,H_2)}\equiv \cdot\cdot\cdot\equiv \dfrac{(f^1,H_{k_1})}
	{(f^2,H_{k_1})}}_{\text { group } 1}\not\equiv
\underbrace{\dfrac{(f^1,H_{k_1+1})}{(f^2,H_{k_1+1})}\equiv \cdot\cdot\cdot\equiv \dfrac{(f^1,H_{k_2})}{(f^2,H_{k_2})}}_{\text { group } 2}$$
$$\not\equiv \underbrace{\dfrac{(f^1,H_{k_2+1})}{(f^2,H_{k_2+1})}\equiv \cdot\cdot\cdot\equiv \dfrac{(f^1,H_{k_3})}{(f^2,H_{k_3})}}_{\text { group } 3}\not\equiv \cdot\cdot\cdot\not\equiv \underbrace{\dfrac{(f^1,H_{k_{s-1}+1})}{(f^2,H_{k_{s-1}+1})}\equiv\cdot\cdot\cdot \equiv 
	\dfrac{(f^1,H_{k_s})}{(f^2,H_{k_s})}}_{\text { group } s},$$
where $k_s=q.$ 

Then, we have
$P_i=(f^1,H_i)(f^2,H_{\sigma (i)})-(f^2,H_i)(f^1,H_{\sigma (i)})\not\equiv 0$, 
for all $1\le i\le q$, where
\begin{equation*}
\sigma (i)=
\begin{cases}
i+n& \text{ if $i+n\leq q$},\\
i+n-q&\text{ if  $i+n> q$}.
\end{cases}
\end{equation*}

By using the same argument as in the proof of Theorem \ref{1.1}, we have
\begin{align*}
\nu_{P_i}(z) 
\ge&\sum_{v=i,\sigma (i)}\min\{\nu_{(f^1,H_v)}(z),\nu_{(f^2,H_v)}(z)\}+\sum_{\underset{v\ne i,\sigma (i)}{v=1}}^{q}\nu_{(f^1,H_v)}^{[1]}(z)\\
\ge&\sum_{v=i,\sigma (i)}\bigl (\nu_{(f^1,H_v)}^{[n]}(z)+\nu_{(f^2,H_v)}^{[n]}(z)-n\nu_{(f^1,H_v)}^{[1]}(z)\bigl )+\sum_{\underset{v\ne i,\sigma (i)}{v=1}}^{q}\nu_{(f^1,H_v)}^{[1]}(z).
\end{align*}
Summing-up of both sides of the above inequality over 
$i=1,\ldots,q$, we obtain 
\begin{align}\label{5.2}
\begin{split}
\nu_{\prod_{i=1}^qP_i}(z) \ge &\sum_{i=1}^{q}\biggl (\nu_{(f^1,H_i)}^{[n]}(z)+\nu_{(f^2,H_i)}^{[n]}(z)\biggl )+(q-2)\cdot \sum_{i=1}^{q}\nu_{(f^1,H_i)}^{[1]}(z)\\
= &\left (1+\frac{q-2}{2n}\right)\cdot \sum_{i=1}^{q}\left (\nu_{(f^1,H_i)}^{[n]}(z)+\nu_{(f^2,H_i)}^{[n]}(z)\right )
\end{split}
\end{align}
Since $\prod_{i=1}^qP_i\in B(q,0;f^1,f^2)$, from Proposition \ref{3.5} we have
$$q\le n+1+\rho n(n+1)+ \frac{q}{1+(q-2)/(2n)}=n+1+\rho n(n+1)+ \frac{2nq}{q+2n-2}.$$
This is a contradiction. Then $\sharp \mathcal {F}(f,\{H_i\}_{i=1}^q,1)=1$.

Now, if $q>2n+2+2\rho n(n+1)$, we have
$$q>n+1+\rho n(n+1)+ \frac{2nq}{4n}\ge  n+1+\rho n(n+1)+\frac{2nq}{q+2n-2}.$$
The theorem is proved.
\end{proof}

\begin{proof}[{\sc Proof of Theorem \ref{1.2}}]
By Theorem \ref{5.1}, it is enough for us to prove the theorem with $q\le 2n+2+2\rho n(n+1)$.

Suppose contrarily that there exist two distinct elements $f^1,f^2$ of $\mathcal F(f^1,\{H_i\}_{i=1}^{q},n+1)$. 
Similarly as the proof of Theorem \ref{5.1}, we may assume that
$$\underbrace{\dfrac{(f^1,H_1)}{(f^2,H_1)}\equiv \dfrac{(f^1,H_2)}{(f^2,H_2)}\equiv \cdot\cdot\cdot\equiv \dfrac{(f^1,H_{k_1})}
	{(f^2,H_{k_1})}}_{\text { group } 1}\not\equiv
\underbrace{\dfrac{(f^1,H_{k_1+1})}{(f^2,H_{k_1+1})}\equiv \cdot\cdot\cdot\equiv \dfrac{(f^1,H_{k_2})}{(f^2,H_{k_2})}}_{\text { group } 2}$$
$$\not\equiv \underbrace{\dfrac{(f^1,H_{k_2+1})}{(f^2,H_{k_2+1})}\equiv \cdot\cdot\cdot\equiv \dfrac{(f^1,H_{k_3})}{(f^2,H_{k_3})}}_{\text { group } 3}\not\equiv \cdot\cdot\cdot\not\equiv \underbrace{\dfrac{(f^1,H_{k_{s-1}+1})}{(f^2,H_{k_{s-1}+1})}\equiv\cdot\cdot\cdot \equiv 
	\dfrac{(f^1,H_{k_s})}{(f^2,H_{k_s})}}_{\text { group } s},$$
where $k_s=q.$ 

Then, we have
$P_i=(f^1,H_i)(f^2,H_{\sigma (i)})-(f^2,H_i)(f^1,H_{\sigma (i)})\not\equiv 0$, 
for all $1\le i\le q$, where
\begin{equation*}
\sigma (i)=
\begin{cases}
i+n& \text{ if $i+n\leq q$},\\
i+n-q&\text{ if  $i+n> q$}.
\end{cases}
\end{equation*}
For each $1\le i\le q$, we set $S_i=\{z\in \C^m\ |\ \nu_{(f^1,H_i)}(z)\ne\nu_{(f^2,H_i)}(z)\}$.
Then $\overline{S_i}$ is an analytic subset of dimension $m-1$ 
and $\overline{S_i}\setminus S_i$ is an analytic subset of dimension
$\le m-2$. Denote by $\nu_{S_i}$ the reduced divisor with the support $\overline{S_i}$. For $z\in f^{-1}(H_i)$, it is easy to see that:

$\bullet$ If $z\in S_i$ then $\min\{\nu_{(f^1,H_i)}(z),\nu_{(f^2,H_i)}(z)\}>n$. 
Because $\nu_{S_i}(z)=1$, we have
\begin{align*}
\min\{\nu_{(f^1,H_i)}(z),\nu_{(f^2,H_i)}(z)\}\ge n+1\ge \nu_{(f^1,H_i)}^{[n]}(z)+\nu_{S_i}(z).
\end{align*}

$\bullet$ If $z\not\in \overline S_i$ then $\nu_{(f^1,H_i)}(z)=\nu_{(f^2,H_i)}(z)$
and $\nu_{S_i}(z)=0$ then
\begin{align*}
\min\{\nu_{(f^1,H_i)}(z),\nu_{(f^2,H_i)}(z)\}=\nu_{(f^1,H_i)}^{[n]}(z)=\nu_{(f^1,H_i)}^{[n]}(z)+\nu_{S_i}(z).
\end{align*}
It yields that
\begin{align*}
\min\{\nu_{(f^1,H_i)}(z),\nu_{(f^2,H_i)}(z)\}\ge\nu_{(f^1,H_i)}^{[n]}(z)+\nu_{S_i}(z).
\end{align*}
for all $z\in f^{-1}(H_i)$ and hence it holds for all $z\in \mathbb B^m(1)$.

By using the same argument as in the proof of Theorem \ref{1.1}, we have
\begin{align*}
\nu_{P_i}(z) 
\ge&\sum_{v=i,\sigma (i)}\min\{\nu_{(f^1,H_v)}(z),\nu_{(f^2,H_v)}(z)\}+\sum_{\underset{v\ne i,\sigma (i)}{v=1}}^{q}\nu_{(f^1,H_v)}^{[1]}(z)\\
\ge&\sum_{v=i,\sigma (i)}\bigl (\nu_{(f^1,H_v)}^{[n]}(z)+\nu_{S_v}(z)\bigl )+\sum_{\underset{v\ne i,\sigma (i)}{v=1}}^{q}\nu_{(f^1,H_v)}^{[1]}(z).
\end{align*}
Summing-up of both sides of the above inequality over 
$i=1,\ldots,q$, we obtain 
\begin{align}\label{5.3}
\begin{split}
\nu_{\prod_{i=1}^qP_i}(z) \ge &2 \sum_{i=1}^{q}\biggl (\nu_{(f^1,H_i)}^{[n]}(z)+\nu_{S_i}\biggl )+(q-2) \sum_{i=1}^{q}\nu_{(f^1,H_i)}^{[1]}(z)\\
= &\left (1+\frac{q-2}{2n}\right) \sum_{i=1}^{q}\left (\nu_{(f^1,H_i)}^{[n]}(z)+\nu_{(f^2,H_i)}^{[n]}(z)\right )+2 \sum_{i=1}^{q}\nu_{S_i}(z).
\end{split}
\end{align}

Assume that $H_i=\{a_{i0}\omega_0+\cdot\cdot\cdot+a_{in}\omega_n=0\}.$
We set $h_i=\frac {(f^1,H_i)}{(f^2,H_i)}\ (1\le i\le q).$ Then
$\frac {h_i}{h_j} = \frac {(f^1,H_i)\cdot (f^2,H_j)}{(f^1,H_j)\cdot (f^2,H_i)}$
does not depend on representations of $f^1$ and $f^2$ respectively.

Take an arbitrary subset of $2n+2$ elements of the set $\{1,\ldots,q\}$, for instance it is $\{1,\ldots,2n+2\}$.
Since $\sum_{k=0}^n a_{ik}f^{1}_{k}-h_i\cdot \sum_{k=0}^n a_{ik}f^{2}_{k}=0\ (1\le i \le 2n+2),$ it implies that 
$$\det \ (a_{i0},\ldots,a_{in},a_{i0}h_i,\ldots,a_{in}h_i; 1\le i \le 2n+2)=0.$$

For each subset $I\subset \{1,2,\ldots,2n+2\},$ put $h_I=\prod_{i\in I}h_i$. Denote by $\mathcal {I}$ the set of all 
combinations $I=(i_1,\ldots,i_{n+1})$ with $1\le i_1<...<i_{n+1}\le q.$

For each $I=(i_1,\ldots,i_{n+1})\in \mathcal {I}$, define 
$$A_I=(-1)^{\frac {(n+1)(n+2)}{2}+i_1+\cdots+i_{n+1}}\cdot \det (a_{i_rl};1\le r \le n+1,0\le l \le n)\cdot$$
$$\quad\quad\quad\quad\quad\quad\quad \quad\quad\quad\quad\quad\quad\quad\det (a_{j_sl};1\le s \le n+1,0\le l \le n),$$
where $J=(j_1,\ldots,j_{n+1})\in \mathcal {I}$ such that $I \cup J =\{1,2,\ldots,2n+2\}.$
We have 
$$
\sum_{I\in \mathcal {I}}A_Ih_I=0.
$$

Take $I_0\in \mathcal {I}$. Then $A_{I_0}h_{I_0}=-\sum_{I\in \mathcal {I}, I\ne I_0}A_Ih_I$, that is,
$$
h_{I_0}=-\sum_{I\in \mathcal {I}, I\ne I_0}\frac {A_I}{A_{I_0}}h_I.
$$ 
Remark that for each $I\in \mathcal {I}$, then $\frac{A_I}{A_{I_0}}\ne 0.$

Denote by $t$ the minimal number satisfying the following: 
There exist $t$ elements $I_1,\ldots,I_t \in \mathcal {I}\setminus \{I_0\}$ and $t$ nonzero constants $b_i \in 
\C $ such that $h_{I_0}=\sum_{i=1}^tb_ih_{I_i}.$

Since $h_{I_0}\not \equiv 0$ and by the minimality of $t$, 
it follows that the family $\{h_{I_1},\ldots,h_{I_t}\}$ is linearly 
independent over $\C$.

{\bf Case 1.} $t=1$. Then $\dfrac {h_{I_0}}{h_{I_1}}\in\C^*.$

{\bf Case 2.} $t\ge 2.$

Consider the meromorphic mapping $F:\B^m(1) \to \P^{t-1}(\C)$ with a reduced representation
$F=(h_{I_1}h/d:\cdots :d h_{I_t}h/d)$, where $h=\prod_{i=1}^{2n+2}(f^2,H_i)$ and $d$ is holomorphic on $\B^m(1)$.

If $z$ is a zero of $h_{I_i}h/d,$ then $z$ must be either zero or pole of some $h_v$.
Hence $z$ belongs to $S_v$ for some $v$. This yields that
$\nu_{d h_{I_i}}^{[1]}(z)\le\sum_{v=1}^{q}\nu_{S_v}.$

It is clear that $T_F(r,r_0)\le (n+1)(T_{f^1}(r,r_0)+T_{f^2}(r,r_0))-N_d(r,r_0)$. Denote by $W(F)$ the general Wronskian of $F$ and set 
$$G:=\prod_{0\le s<l\le 2}\left (\dfrac{(h_{I_l}h/d-h_{I_s}h/d)\cdot W(F)}{\prod_{i=0}^t(h_{I_i}h/d)}\right ).$$
Then we have $G\in B(0,(t-1)(t+1)/2;F)\subset B(0,(t-1)(t+1)/2;f^1,f^2)$. For each subset $J\subset\{1,\ldots,q\}$, set $J^c=\{1,\ldots,q\}\setminus J$. It is clear that 
$$\bigcup_{0\le s<l\le 2}((I_l\setminus I_s)\cup (I_s\setminus I_l))^c=\{1,\ldots,q\}.$$
We have
\begin{align*}
-\nu_G&=3\sum_{i=0}^t\nu_{h_{I_i}h/d}-3\nu_{W(F)}-\sum_{0\le s<l\le 2}\nu_{h_{I_l}h/d-h_{I_s}h/d}\\
&\le 3\sum_{i=0}^t\nu_{h_{I_i}h/d}^{[1]}- \sum_{0\le s<l\le 2}(\nu^0_{h_{I_l}/h_{I_s}-1})\\
&\le 3\sum_{i=0}^t\nu_{h_{I_i}h/d}^{[1]}- \sum_{0\le s<l\le 2}\sum_{i\in ((I_l\setminus I_s)\cup (I_s\setminus I_l))^c}\nu^{[1]}_{(f,H_i)}\\
&\le 3\sum_{i=0}^t\nu_{h_{I_i}h/d}^{[1]}- \sum_{i=1}^q\nu^{[1]}_{(f,H_i)}.
\end{align*}
Then, we have
\begin{align*}
\nu_{\prod_{i=1}^qP_i} &\ge \left (1+\frac{q-2}{2n}\right) \sum_{i=1}^{q}\left (\nu_{(f^1,H_i)}^{[n]}+\nu_{(f^2,H_i)}^{[n]}\right )+2 \sum_{i=1}^{q}\nu_{S_i}\\
&\ge \left (1+\frac{q-2}{2n}\right) \sum_{i=1}^{q}\left (\nu_{(f^1,H_i)}^{[n]}+\nu_{(f^2,H_i)}^{[n]}\right )+\frac{2}{3(t+1)}\left (\sum_{i=1}^q\nu^{[1]}_{(f,H_i)}+\nu_G\right).
\end{align*}
This yields that
$$ \nu_{G (\prod_{i=1}^qP_i)^{3(t+1)}}\ge  \left(6(t+1)+\frac{1}{n}\right)\sum_{i=1}^{q}\left (\nu_{(f^1,H_i)}^{[n]}+\nu_{(f^2,H_i)}^{[n]}\right).$$
We note that $G (\prod_{i=1}^qP_i)^{3(t+1)}\in B(3q(t+1),(t-1)(t+1)/2;f^1,f^2)$. Hence, from Proposition \ref{3.5}, we have
\begin{align*}
q&\le n+1+2\rho n(n+1)+\dfrac{3q(t+1)+\rho (t-1)(t+1)}{6(t+1)+\frac{1}{n}}\\
&\le n+1+2\rho n(n+1)+\frac{3(t+1)(2n+2+2\rho n(n+1))+\rho (t-1)(t+1)}{6(t+1)+\frac{1}{n}}\\
&=2n+1+\frac{6n(t+1)}{6n(t+1)+1}+\rho \left (2n(n+1)+\frac{6n(n+1)(t+1)+(t-1)(t+1)}{6(t+1)+\frac{1}{n}}\right)\\
&=2n+1+\frac{6np}{6np+1}+\rho \left (2n(n+1)+\frac{6n^2(n+1)p+np(p-2)}{6np+1}\right).
\end{align*}
This is a contradiction. Hence, this case does not happen.

Therefore, for each $I\in \mathcal {I}$, there is $J\in \mathcal {I}\setminus \{I\}$ such that  $\dfrac {h_I}{h_J}\in\C.$

We now consider the torsion free abelian subgroup generated by the family 
$\{[h_1] ,\ldots, [h_{q}]\}$ of the abelian group $\mathcal {M^*}_m/ \C^*.$ 
Then the family $\{[h_1],\ldots,[h_{q}]\}$ has the property $P_{q,n+1}$. 
It implies that there exist $q-2n\ge 2$ elements, without loss of generality 
we may assume that they are  $[h_1],[h_2],$  such that $[h_1]=[h_2].$ Then
$\dfrac {h_1}{h_2}=\lambda\in\C^*.$

Suppose that $\lambda\not\equiv 1.$ Since $\dfrac{h_1(z)}{h_2(z)}=1$ for each $z\in\bigcup_{i=3}^{q}f^{-1}(H_i)\setminus(f^{-1}(H_1)\cup f^{-1}(H_2)),$ 
it implies that  $\bigcup_{i=3}^{q}f^{-1}(H_i)=\emptyset$. Hence $\sum_{i=3}^q\nu_{(f,H_i)}^{[n]}=0$. Then, by Proposition \ref{3.5}, we have
\begin{align*} 
q-2\le n+1+\rho n(n+1).
\end{align*}
This is a contradiction. Thus, $\lambda\equiv 1$, i.e., $h_1\equiv h_2$. Hence 
$\nu_{(f^1,H_i)}=\nu_{(f^2,H_i)},\ i=1,2$.  

Now we consider 
\begin{align*}
P_{1}&=(f^1,H_{1})(f^2,H_{n+1})-(f^1,H_{n+1})(f^2,H_1)\\
&=\frac{(f^1,H_1)}{(f^1,H_2)}\left ((f^1,H_2)(f^{2},H_{n+1})-(f^1,H_{n+1})(f^2,H_2)\right )\not\equiv 0.
\end{align*}
From this inequality, we easily see that
\begin{align}\label{5.4}
\nu_{P_1}\ge (\nu_{(f^1,H_1)}+\nu_{(f^1,H_1)}^{[1]})+\nu_{(f^1,H_{n+1})}^{[n]}+\sum_{\underset{v\ne 1,n+1}{v=1}}^{q}\nu_{(f^1,H_v)}^{[1]}
\end{align}
and similarly
\begin{align}\label{5.5}
\begin{split}
\nu_{P_{q-n+1}}&\ge \nu_{(f^1,H_{q-n+1})}+(\nu_{(f^1,H_1)}^{[1]}+\nu_{(f^1,H_1)}^{[n]})+\sum_{\underset{v\ne 1,q-n+1}{v=1}}^{q}\nu_{(f^1,H_v)}^{[1]},\\
\nu_{P_2}&\ge (\nu_{(f^1,H_2)}+\nu_{(f^1,H_2)}^{[1]})+\nu_{(f^1,H_{n+2})}^{[n]}+\sum_{\underset{v\ne 2,n+2}{v=1}}^{q}\nu_{(f^1,H_v)}^{[1]},\\
\nu_{P_{q-n+2}}&\ge \nu_{(f^1,H_{q-n+2})}+(\nu_{(f^1,H_2)}^{[1]}+\nu_{(f^1,H_2)}^{[n]})+\sum_{\underset{v\ne 2,q-n+2}{v=1}}^{q}\nu_{(f^1,H_v)}^{[1]}.
\end{split}
\end{align}
Then, similar as (\ref{5.3}), we have
\begin{align}\label{5.6}
\nu_{\prod_{i=1}^qP_i}(z) \ge\left (1+\frac{q-2}{2n}\right) \sum_{i=1}^{q}\left (\nu_{(f^1,H_i)}^{[n]}(z)+\nu_{(f^2,H_i)}^{[n]}(z)\right )+\frac{2}{n}(\nu_{(f^1,H_1)}^{[n]}+\nu_{(f^1,H_2)}^{[n]}).
\end{align}

On the other hand, by setting 
$$\gamma (i)=\begin{cases}
i+n&\text{ if }i+n\le q\\
i+n-q+2&\text { if }i+n>q
\end{cases}\text{ and }P'_i=(f^1,H_i)(f^2,H_{\gamma (i)})-(f^2,H_i)(f^1,H_{\gamma (i)}),$$ 
we also have
$$ \nu_{P'_i}\ge\nu^{[n]}_{(f^1,H_i)}+\nu^{[n]}_{(f^1,H_{\gamma (i)})}+\sum_{\overset{j=1}{j\ne i,\gamma (i)}}^q\nu^{[1]}_{(f^1,H_j)}. $$
Summing-up both sides of this inequalities over all $3\le i\le q$, we have
\begin{align}\label{5.7}
\begin{split}
\nu_{\prod_{i=3}^qP'_i}&\ge \sum_{i=3}^{q}\left (2\nu_{(f^1,H_i)}^{[n]}+(q-4)\nu_{(f^1,H_i)}^{[1]}\right)+(q-2)\left (\nu_{(f^1,H_1)}^{[1]}+\nu_{(f^1,H_2)}^{[1]}\right)\\
&\ge \left (1+\frac{q-4}{2n}\right)\sum_{i=3}^{q}(\nu_{(f^1,H_i)}^{[n]}+\nu_{(f^2,H_i)}^{[n]})+\frac{q-2}{n}\left (\nu_{(f^1,H_1)}^{[n]}+\nu_{(f^1,H_2)}^{[n]}\right)\\
&=\left(1+\frac{q-4}{2n}\right)\sum_{i=1}^{q}(\nu_{(f^1,H_i)}^{[n]}+\nu_{(f^2,H_i)}^{[n]})-\frac{2n-2}{n}\left (\nu_{(f^1,H_1)}^{[n]}+\nu_{(f^1,H_2)}^{[n]}\right)
\end{split}
\end{align}
From (\ref{5.6}) and (\ref{5.7}), we have 
\begin{align*}
\nu_{(\prod_{i=1}^qP_i)^{n-1}\cdot (\prod_{i=3}^qP_i')}&\ge \left(n-1+\frac{(n-1)(q-2)}{2n}+1+\frac{q-4}{2n}\right) \sum_{i=1}^{q}(\nu_{(f^1,H_i)}^{[n]}+\nu_{(f^2,H_i)}^{[n]})\\
&=\frac{nq+2n^2-2n-2}{2n}\sum_{i=1}^{q}(\nu_{(f^1,H_i)}^{[n]}+\nu_{(f^2,H_i)}^{[n]}).
\end{align*}
It is clear that $(\prod_{i=1}^qP_i)^{n-1}\cdot (\prod_{i=3}^qP_i')\in B(nq-2,0;f^1,f^2)$. Then from Lemma \ref{3.5}, we have
\begin{align*}
q&\le n+1+\rho n(n+1)+ \frac{(n-1)q+q-2}{n-1+\frac{(n-1)(q-2)}{2n}+1+\frac{q-4}{2n}}\\
&\le n+1+\rho n(n+1)+\frac{2n(n(2n+2+2\rho n(n+1))-2)}{n(2n+2)+2n^2-2n-2}\\
&=n+1+\rho n(n+1)+\frac{4n^3+4n^2-4n+4\rho n^2(n+1)}{4n^2-2}\\
&\le 2n+1+\frac{4n^2-2n}{4n^2-2}+\rho(n(n+1)+\frac{4n^2(n+1)}{4n^2-2}).
\end{align*}
This is a contradiction. 

Hence $f^1\equiv f^2$. The theorem is proved. 
\end{proof}

\section*{Acknowledgements} This work was done during a stay of the author at Vietnam Institute for Advanced Study in Mathematics (VIASM). He would like to thank the institute for the support. This research is funded by Vietnam National Foundation for Science and Technology Development (NAFOSTED) under grant number 101.04-2018.01.

\end{document}